\newcommand{\be}[1]{\begin{equation}\label{#1}}
\newcommand{\ee}{\end{equation}}
\renewcommand{\i}{\mathrm{i}}
\newcommand\treq{\stackrel{\mathclap{\normalfont\mbox{tr}}}{=}}
\crefname{hypothesis}{Hypothesis}{Hypotheses}
\title{An inverse spectral problem for a damped wave operator}
\author{Gang Bao\thanks{School of Mathematical Sciences, Zhejiang University, Hangzhou, Zhejiang, China.
{G. Bao's research was supported in part by NSFC 11621101. (\email{baog@zju.edu.cn}). }
}
\and Xiang Xu\thanks{School of Mathematical Sciences, Zhejiang University, Hangzhou, Zhejiang, China. X. Xu's research was supported in part by the Fundamental Research Funds for the Central Universities. (\email{xxu@zju.edu.cn}).}
\and Jian Zhai \thanks{Institute for Advanced Study, The Hong Kong University of Science and Technology, Hong Kong, China.
  (\email{iasjzhai@ust.hk}).}
   }
\newcommand*{\addFileDependency}[1]{
  \typeout{(#1)}
  \@addtofilelist{#1}
  \IfFileExists{#1}{}{\typeout{No file #1.}}
}
\begin{document}
\maketitle

\begin{abstract}
 This paper proposes a new and efficient numerical algorithm for recovering the damping coefficient from the spectrum of a damped wave operator, which is a classical {\it Borg-Levinson inverse spectral problem}. The algorithm is based on inverting a sequence of {\em trace formulas}, which are deduced by a recursive formula, bridging geometrical and spectrum information explicitly in terms of Fredholm integral equations. Numerical examples are presented to illustrate the efficiency of the proposed algorithm.
\end{abstract}

\begin{keywords}
 trace formulas,  damped wave operator,  inverse spectral problem
\end{keywords}

\begin{AMS}
11F72, 35R30, 65F18
\end{AMS}

\section{Introduction}
A damped wave equation describes a wave whose amplitude of oscillation decreases with time. It has far-ranging applications in many directions such as electromagnetic waves, acoustic waves and elastic waves. For instance, it was the first practical model to describe the radio transmission by spark gap transmitters during the wireless telegraphy era, which is now generally referred to as ``Class B" emission. In \cite{Bamberger1982}, the authors studied the harmonics on stringed instruments and the damping coefficient was considered as the frictional resistance of the string, which may be caused by external forces. Moreover, similar mathematical models with damping term are proposed for linear elastic systems in \cite{Chen1982,Falun1990}, where the damping coefficient was considered as viscosity. More applications can be found in the survey \cite{gesztesy2011damped} and the references cited therein. Consider the one-dimensional damped wave equation with unit wave speed and viscous damping $\alpha(x)$:
\begin{equation}\label{eq1}
\begin{split}
&u_{tt}(x,t)-u_{xx}(x,t)+\alpha(x)u_t(x,t)=0,\quad (x,t)\in (0,1)\times[0,\infty),\\
&u(\cdot,0)=f_0,\quad u_t(\cdot,0)=f_1,\quad t>0,\\
&u(\cdot, t)\,\text{ satisfies certain boundary conditions at }x=0\text{ and }x=1\text{ for }t\in\mathbb{R}^{+}.
\end{split}
\end{equation}
Assume that $\alpha(x)\in L^\infty(0,1)$  is real-valued and $0\leq 2a\leq \alpha(x)\leq 2b<+\infty$. We can rewrite \eqref{eq1} in a vector form:
\begin{equation}\label{eqV}
V_t=A(\alpha)V,
\end{equation}
where $V=(u,u_t)$ and
\begin{equation}\label{damp_OP1}
A(\alpha)=\left(
\begin{array}{cc}
0 & I\\
\left(\frac{\mathrm{d}^2}{\mathrm{d}x^2}\right)_{\mathrm{bc}}&-\alpha(x)
\end{array}
\right).
\end{equation}
Here the subscript ``$\mathrm{bc}$" represents appropriate boundary conditions at $x=0,1$, eg. Dirichlet, etc., to be described in details in Section \ref{sec2}. The initial condition $(f_0,f_1)$ needs to be consistent with the boundary condition.
 The well-posedness of the initial boundary value problem for \eqref{eqV} with initial value $V(0)=(u(\cdot,0),\, u_t(\cdot,0))=(f_0,f_1)\in  L^2(0,1)^2$ can be obtained by the standard semigroup approach \cite{Engel2000,Fattorini1985}. Moreover, it is known that if $\alpha(x)\in L^{\infty}(0,1)$ then $A(\alpha)$ has a compact inverse and hence a discrete spectrum, consisting of countably many eigenvalues, denoted by $\sigma_p(A(\alpha))=\{\lambda_j(A(\alpha))\}_{j\in J}$.
\par
The present work is devoted to the inverse problem of recovering the damped coefficient $\alpha(x)$ from the spectrum $\sigma_p(A(\alpha))$. This is a classical inverse spectral problem in mathematical physics and relates to a variety of vibration absorption problems in the engineering literature, see \cite{mottershead2006inverse}. Mathematically, it can be viewed as a classical {\it Borg-Levinson inverse spectral problem}. The uniqueness on determination of $\alpha(x)$ from the Dirichlet eigenvalues was established for $\alpha$ even, with respect to $x=1/2$, see \cite{Cox2011reconstructing}. In \cite{Pivovarchik1999}, for weakly damped
strings, i.e., with no purely imaginary eigenvalues, the determination of the potential and the boundary conditions were considered by the given spectrum and length of the string. In \cite{BORISOV2009}, Borisov et al showed the criterion for the damping term to be constant and expect this inverse problem to be more rigid than Sturm-Liouville problem since there is no other smooth damping term yielding the same spectrum as constant damping. For numerical reconstruction of the damping coefficient, to the authors' best knowledge, the only available approach was introduced by Cox and Embree \cite{Cox2011reconstructing}, which was based on a refined asymptotic formula for the large eigenvalues. However, it is known that for inverse Sturm-Liouville operators, there are many works on numerical algorithms, see \cite{lowe1992recovery, rundell1992reconstruction, Sacks2015} and the references therein for an overview on numerical progress. Moreover, Xu and Zhai \cite{XZ1} have developed a numerical scheme for recovering a density in the Sturm-Liouville operator based on a sequence of {\it trace formulas} which give an explicit relation between the eigenvalues and the unknown coefficient recently.


  In this paper, we propose a novel numerical scheme for recovering the damping coefficient $\alpha(x)$ from the spectrum $\sigma_p(A(\alpha))$ in a similar framework as \cite{XZ1}. The scheme is based on the explicit formulas which will be derived in the next section for the following maps
\begin{equation}\label{map1}
\alpha\rightarrow \sum_{j\in J}\lambda_j(A(\alpha))^{-s}, \mbox{ for } s = 1,\cdots, \infty.
\end{equation}
where $\sum_{j\in J}\lambda_j(A(\alpha))^{-s}$ are traces of $(A(\alpha))^{-s}$. It has been shown in \cite{XZ1} that inverting the above maps are severely ill-posed when $A$ is a Laplacian operator with Dirichelt bounary conditions. According to the property of trace class operators of $(A(\alpha))^{-s}$ ($s=2,\cdots,\infty$), it makes sense to reduce the numerical instability by inverting the following maps
\begin{equation*}
\alpha\rightarrow \Big\{\sum_{j\in J}T_n(\lambda_j(A(\alpha))^{-1})\Big\}_{n=1}^\infty,
\end{equation*}
with a collection of carefully chosen polynomials $\{T_n(z)\}_{n=1}^\infty$, $z\in\mathbb{C}$.
It should be noted that due to the inherent difficulties for damped wave operator, the two ingredients of the numerical algorithm in \cite{XZ1}, i.e., trace formulas and stabilizing polynomials are completely different. Due to the model difference, the trace formulas are derived based on the resolvent of $A(\alpha)$, instead of the Green function for the Sturm-Liouville operator. Moreover, since the eigenvalue distribution is no longer in the real axis as in the previous case, the choice of stabilizing polynomial which depends on the spectrum distribution becomes more complicated.


The rest of the paper is organized as follows. Section \ref{sec2} is devoted to establishing the desired trace formulas. By analyzing the resolvent of $A(\alpha)$, we arrive at some explicit recursive formulas. In Section \ref{sec3}, we show the injectivity of the Fr\'echet derivative of the map \eqref{map1} at a constant damping. In Section \ref{sec4}, we present the algorithm with implementation details. In Section \ref{sec5}, we conduct several numerical experiments to illustrate the efficiency of our algorithm. Impacts of different parameters are also discussed in this section.

\section{Trace formulas}\label{sec2}
In this section, we derive a sequence of trace formulas useful for inverting $\alpha(x)$.
Let $T$ be an unbounded operator on $L^2(0,1)$ such that $Tf=\i\frac{\mathrm{d}}{\mathrm{d}x}f$ with appropriate boundary conditions at $x=0,1$. The operator $T$ needs to be densely defined and closed. Here, we list some examples of $T$, namely, $T_{\min},\,T_0,\,T_1,\,T_\omega$, which are carefully defined and characterized in \cite{gesztesy2011damped}. For the convenience of readers,  we summarize some results here. The domains of these operators are:
\begin{equation*}
\begin{split}
\mathrm{dom}(T_{\min})=&\{f\in L^2(0,1)\vert f\in AC([0,1]);\, f(0)=f(1)=0\},\\
\mathrm{dom}(T_0)=&\{f\in L^2(0,1)\vert f\in AC([0,1]); f(0)=0; f'\in L^2(0,1)\},\\
\mathrm{dom}(T_1)=&\{f\in L^2(0,1)\vert f\in AC([0,1]); f(1)=0; f'\in L^2(0,1)\},\\
\mathrm{dom}(T_\omega)=&\{f\in L^2(0,1)\vert f\in AC([0,1]); f(1)=\omega f(0); f'\in L^2(0,1)\}.
\end{split}
\end{equation*}
Here $AC([0,1])$ denotes the space of absolutely continuous functions on $[0,1]$.
For $\omega\in\mathbb{R}\setminus\{0,1\}$, we have
\begin{equation*}
\ker (T_{\min})=\ker (T_0)=\ker (T_1)=\ker (T_\omega)=\{0\}.
\end{equation*}
Then $T^*Tf =-f''$ for any $T=T_{\min},\,T_0,\,T_1,\,T_\omega$ with
\begin{equation*}
\begin{split}
\mathrm{dom}({T_{\min}}^*T_{\min})=&\{f\in L^2(0,1)\vert f,f'\in AC([0,1]), f(0)=f(1)=0, f''\in L^2\},\\
\mathrm{dom}({T_0}^*T_0)=&\{f\in L^2(0,1)\vert f,f'\in AC([0,1]), f(0)=f'(1)=0, f''\in L^2\},\\
\mathrm{dom}({T_1}^*T_1)=&\{f\in L^2(0,1)\vert f,f'\in AC([0,1]), f'(0)=f(1)=0, f''\in L^2\},\\
\mathrm{dom}({T_\omega}^*T_\omega)=&\{f\in L^2(0,1)\vert f,f'\in AC([0,1]), f(1)=\omega f(0), \omega f'(1)= f'(0); f''\in L^2\}.
\end{split}
\end{equation*}
By the fact
\begin{equation*}
\ker(T^*T)=\ker(T),
\end{equation*}
we have the invertibility of $T^*T$ for $T=T_{\min},\,T_0,\,T_1,\,T_\omega$ with $\omega\in\mathbb{R}\setminus\{0,1\}$.
\begin{remark}
Notice that ${T_{\min}}^*T_{\min}=-\Delta_D=-\left(\frac{\mathrm{d}^2}{\mathrm{d}x^2}\right)_D$ is the Dirichlet Laplacian.
\end{remark}
\par We take $T$ to be any of the above defined operators. Define
\begin{equation}\label{damp_OP}
A(\alpha)=\left(
\begin{array}{cc}
0 & I\\
-T^*T&-\alpha(x)
\end{array}
\right)
\end{equation}
on the space $L^2([0,1])^2$.
Since $T^*T$ is coercive, then $0\in\rho(A(\alpha))$ \cite[Theorem 2.3]{gesztesy2011damped}. It is easy to see that if $\lambda$ is an eigenvalue of $A(\alpha)$ with eigenvector $u=[y,z]$, then $z=\lambda y$ and
\begin{equation}\label{eq_eigenvector}
y''-\lambda\alpha y-\lambda^2 y=0,
\end{equation}
with $y$ satisfying suitable boundary conditions. It is clear that $\overline{\lambda}$ is also an eigenvalue of $A(\alpha)$ with eigenvector $\overline{u}=[\overline{y},\overline{z}]=[\overline{y},\overline{\lambda} \overline{y}]$. Moreover, by \cite[Lemma 2.5]{gesztesy2011damped}, the two eigenvalues $\lambda$ and $\overline{\lambda}$ have the same geometric and algebraic multiplicities.
Actually, the spectrum $\sigma_p(A(\alpha))$ consists of two infinite sequences $\{\lambda_{\pm j}(A(\alpha))\}_{j=1}^\infty$, where $\mathrm{Im}\,\lambda_{-j}=-\mathrm{Im}\,\lambda_j$. We denote $J=\mathbb{Z}\setminus\{0\}$, and
\begin{equation*}
\sigma_p(A(\alpha))=\{\lambda_j\}_{j\in J}=\{\lambda_j(A(\alpha))\}_{j\in J}.
\end{equation*}
The eigenvalues are ordered as follows
\begin{equation*}
\cdots\leq \mathrm{Im}\,\lambda_{-2}\leq \mathrm{Im}\,\lambda_{-1}\leq \mathrm{Im}\,\lambda_{1}\leq \mathrm{Im}\,\lambda_{2}\leq\cdots
\end{equation*}
counting algebraic multiplicities. If the spectrum does not contain real eigenvalues, this labeling of eigenvalues is clear and $\lambda_{-j}=\overline{\lambda_j}$ for any $j$. If real eigenvalues exist, one can invoke \cite[Lemma 4.1]{BORISOV2009} and \cite[Theorem 5.3]{CoXZua1994}. Next we give a sufficient condition for the nonexistence of real eigenvalues.

\begin{lemma}
If $b<\sqrt{\mu_1(T^*T)}$, where $\mu_1(T^*T)$ is the smallest eigenvalue of $T^*T$, then $\sigma_p(A(\alpha))\cap\mathbb{R}=\emptyset$.
\end{lemma}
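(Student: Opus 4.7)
The plan is to argue by contradiction using the reduction already recorded in \eqref{eq_eigenvector}. Suppose $\lambda\in\mathbb{R}$ is an eigenvalue of $A(\alpha)$ with eigenvector $[y,z]$. Since $z=\lambda y$ and since $0\in\rho(A(\alpha))$ rules out $\lambda=0$, we may assume $y\neq 0$ and $y\in\mathrm{dom}(T^*T)$. The equation \eqref{eq_eigenvector} then reads $-T^*T\,y=-y''=-\lambda\alpha y-\lambda^2 y$.

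Take the $L^2(0,1)$ inner product of \eqref{eq_eigenvector} with $y$. Using $\langle y'',y\rangle=-\langle T^*T\,y,y\rangle=-\|Ty\|^2$ (which is valid on $\mathrm{dom}(T^*T)$ for each of the admissible choices of $T$), and noting that $\langle\alpha y,y\rangle=\int_0^1\alpha|y|^2\,\d x\in\mathbb{R}_{\geq 0}$ because $\alpha$ is real-valued and nonnegative, one obtains the identity
\begin{equation*}
\|y\|^2\,\lambda^2+\langle\alpha y,y\rangle\,\lambda+\|Ty\|^2=0.
\end{equation*}
This is a real quadratic in $\lambda$ with strictly positive leading coefficient, so the existence of a real root $\lambda$ forces the discriminant to be nonnegative:
\begin{equation*}
\langle\alpha y,y\rangle^2\;\geq\;4\,\|y\|^2\,\|Ty\|^2.
\end{equation*}

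Now I would plug in the two available bounds. From $\alpha(x)\leq 2b$ we get $\langle\alpha y,y\rangle\leq 2b\|y\|^2$, hence $\langle\alpha y,y\rangle^2\leq 4b^2\|y\|^4$. From the variational characterization $\mu_1(T^*T)=\inf_{f\neq 0}\|Tf\|^2/\|f\|^2$ we get $\|Ty\|^2\geq \mu_1(T^*T)\|y\|^2$, hence $4\|y\|^2\|Ty\|^2\geq 4\mu_1(T^*T)\|y\|^4$. Combining these with the discriminant inequality yields $b^2\geq \mu_1(T^*T)$, contradicting the hypothesis $b<\sqrt{\mu_1(T^*T)}$. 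Therefore $\sigma_p(A(\alpha))\cap\mathbb{R}=\emptyset$.

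The argument is essentially a one-line variational/quadratic-formula estimate once the eigenvalue equation is reduced to \eqref{eq_eigenvector}; there is no serious obstacle. The only point that needs a small bit of care is justifying the integration by parts $\langle y'',y\rangle=-\|Ty\|^2$ in each of the four boundary-condition cases ($T_{\min},T_0,T_1,T_\omega$), but this is exactly what is encoded in $T^*T\,y=-y''$ on the respective domains described earlier in the section.
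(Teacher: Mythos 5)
Your proof is correct and follows essentially the same route as the paper: both reduce the eigenvalue equation \eqref{eq_eigenvector} to the real quadratic $\|y\|^2\lambda^2+\langle\alpha y,y\rangle\lambda+\|Ty\|^2=0$ and show that the bounds $\alpha\leq 2b$ and $\|Ty\|^2\geq\mu_1(T^*T)\|y\|^2$ force the discriminant to be negative when $b<\sqrt{\mu_1(T^*T)}$. The only cosmetic difference is that you argue by contradiction via the sign of the discriminant while the paper writes out the quadratic formula for $\lambda$ explicitly; the substance is identical.
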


\begin{proof}
Integrating \eqref{eq_eigenvector} against $\overline{y}$, we obtain
\begin{equation*}
\int_0^1|y'|^2\mathrm{d}x+\lambda\int_0^1\alpha|y|^2\mathrm{d}x+\lambda^2\int_0^1|y|^2\mathrm{d}x=0
\end{equation*}
for $y\in \mathrm{dom}(T^*T)$, $T=T_{\min},\,T_0,\,T_1,\,T_\omega$ with $\omega\in\mathbb{R}\setminus\{0,1\}$. Then we find
\begin{equation*}
\lambda=\frac{-\int_0^1\alpha|y|^2\mathrm{d}x\pm \left((\int_0^1\alpha|y|^2\mathrm{d}x)^2-4\int_0^1|y'|^2\mathrm{d}x\int_0^1|y|^2\mathrm{d}x\right)^{1/2}}{2\int_0^1|y|^2\mathrm{d}x}.
\end{equation*}
Using $\alpha\leq 2b$, we have
\begin{equation*}
\begin{split}
& \left(\int_0^1\alpha|y|^2\mathrm{d}x\right)^2-4\int_0^1|y'|^2\mathrm{d}x\int_0^1|y|^2\mathrm{d}x\\
\leq &4b^2\left(\int_0^1|y|^2\mathrm{d}x\right)^2-4\int_0^1|y'|^2\mathrm{d}x\int_0^1|y|^2\mathrm{d}x\\
\leq&4\left(\int_0^1|y|^2\mathrm{d}x\right)^2\left(b^2-\frac{\int_0^1|y'|^2\mathrm{d}x}{\int_0^1|y|^2\mathrm{d}x}\right).
\end{split}
\end{equation*}
Notice that the smallest eigenvalue of $T^*T$ is $\mu_1(T^*T)=\inf_{y\in \mathrm{dom}(T^*T)}\frac{\int_0^1|y'|^2\mathrm{d}x}{\int_0^1|y|^2\mathrm{d}x}$. Therefore if $b<\sqrt{\mu_1(T^*T)}$, $\lambda$ is not real-valued.  
\hfill \end{proof}

Note that $\mu_1({T_{min}}^*T_{min})=\pi^2$, $\mu_1({T_0}^*T_0)=\mu_1({T_1}^*T_1)=\frac{1}{4}\pi^2$.

\par Now we proceed to deriving the trace formulas for $(A(\alpha))^{-n-1}$, $n=0,1,2\cdots$. All the trace formulas can be generated by a recursive relation, which is used for the inversion algorithm. The trace formulas for $n=2k$ are obtained in \cite{gesztesy2011damped}, but in a less explicit form. Denote
\begin{align*}
R(\zeta)&=-(2\zeta+\alpha)Q(\zeta), \\
Q(\zeta)&=(T^*T+\zeta^2+\zeta\alpha)^{-1}.
\end{align*}
Note that $Q(\zeta)$, $R(\zeta)$ are of trace class in the separable Hilbert space $L^2(0,1)$. Some useful properties of trace-class operators are summarized in \cite{XZ1}. We denote $A\treq B$ if the operators $A$ and $B$ have the same trace.
\par
By simple calculations, we have the following explicit expression for the resolvent of $A(\alpha)$ (cf. \cite{CoXZua1994})
\begin{equation*}
(A(\alpha)-\zeta)^{-1}=\left(
\begin{array}{cc}
-Q(\zeta)(\zeta+\alpha) & -Q(\zeta)\\
I-\zeta Q(\zeta)(\zeta+\alpha) &-\zeta Q(\zeta)
\end{array}
\right),
\end{equation*}
for $\zeta\in\mathbb{R}\setminus\{0\}$ with $|\zeta|$ sufficiently small. Notice that the operator $(A(\alpha)-\zeta)^{-1}$ is not of trace class (The only ``bad" term is the identity operator in the lower left entry).
However it is clear that
\begin{equation*}
\frac{\partial}{\partial\zeta}(A(\alpha)-\zeta)^{-1}=\frac{\partial}{\partial\zeta}\left(
\begin{array}{cc}
-Q(\zeta)(\zeta+\alpha) & -Q(\zeta)\\
-\zeta Q(\zeta)(\zeta+\alpha) &-\zeta Q(\zeta)
\end{array}
\right)
\end{equation*}
is of trace class. Moreover, we have
\begin{equation*}
\frac{\partial}{\partial\zeta}(A(\alpha)-\zeta)^{-1}\treq \frac{\partial}{\partial\zeta}[-Q(\zeta)(\zeta+\alpha)-\zeta Q(\zeta)]\treq \frac{\partial}{\partial\zeta}R(\zeta).
\end{equation*}
We note here that although $(A(\alpha)-\zeta)^{-1}$ is not of trace class, the operator $R(\zeta)$ is.\\

Next we derive a sequence of trace formulas associated with $R(\zeta)$. First notice
\begin{lemma}
\begin{equation*}
Q'(\zeta)=-Q(\zeta)(2\zeta+\alpha)Q(\zeta).
\end{equation*}
\end{lemma}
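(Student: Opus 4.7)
The plan is to exploit the defining resolvent identity for $Q(\zeta)$ and differentiate it formally, then justify that the computation can be carried out rigorously in the operator norm.

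First I would recall that, by construction, $Q(\zeta) = (T^{*}T + \zeta^{2} + \zeta\alpha)^{-1}$ satisfies
\begin{equation*}
(T^{*}T + \zeta^{2} + \zeta\alpha)\,Q(\zeta) = I, \qquad Q(\zeta)\,(T^{*}T + \zeta^{2} + \zeta\alpha) = I\text{ on }\mathrm{dom}(T^{*}T),
\end{equation*}
for all real $\zeta$ with $|\zeta|$ small enough so that the operator pencil $T^{*}T+\zeta^{2}+\zeta\alpha$ is invertible (this range of $\zeta$ is exactly where $R(\zeta)$ was introduced).

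Next I would write the difference quotient directly:
\begin{equation*}
Q(\zeta+h)-Q(\zeta)=Q(\zeta+h)\bigl[(T^{*}T+\zeta^{2}+\zeta\alpha)-(T^{*}T+(\zeta+h)^{2}+(\zeta+h)\alpha)\bigr]Q(\zeta),
\end{equation*}
which telescopes using the two identities above. The bracket simplifies to $-(2\zeta+\alpha)h - h^{2}$, so
\begin{equation*}
\frac{Q(\zeta+h)-Q(\zeta)}{h} = -Q(\zeta+h)\,(2\zeta+\alpha)\,Q(\zeta) - h\,Q(\zeta+h)\,Q(\zeta).
\end{equation*}
Sending $h\to 0$ and using continuity of $\zeta\mapsto Q(\zeta)$ in the operator norm on the resolvent set (plus boundedness of the multiplication operator $\alpha\in L^{\infty}(0,1)$) yields the claimed formula $Q'(\zeta) = -Q(\zeta)(2\zeta+\alpha)Q(\zeta)$.

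The only technical point that needs a brief check is the norm continuity $\lim_{h\to 0}\|Q(\zeta+h)-Q(\zeta)\|=0$, but this follows from the same identity together with the uniform boundedness of $Q(\zeta+h)$ for small $h$, which is standard resolvent perturbation theory for the analytic family $\zeta\mapsto T^{*}T+\zeta^{2}+\zeta\alpha$ of type (A). Since $T^{*}T$ is self-adjoint and invertible with $\alpha$ a bounded multiplier, no unbounded perturbation issues arise and the derivative exists in the trace norm as well, which is what will be needed when this lemma is fed into the subsequent trace computations for $R(\zeta)$. There is no real obstacle here; the lemma is essentially the operator analogue of the scalar formula $(f^{-1})' = -f^{-1}f'f^{-1}$.
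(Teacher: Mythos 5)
Your proposal is correct and follows essentially the same route as the paper: both write the difference quotient via the resolvent identity $Q(\zeta+h)-Q(\zeta)=Q(\zeta+h)\bigl[(T^{*}T+\zeta^{2}+\zeta\alpha)-(T^{*}T+(\zeta+h)^{2}+(\zeta+h)\alpha)\bigr]Q(\zeta)$, simplify the bracket to $-(2\zeta+\alpha)h-h^{2}$, and pass to the limit $h\to 0$. Your added remarks on norm continuity of $\zeta\mapsto Q(\zeta)$ only make explicit a step the paper leaves implicit.
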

\begin{proof} To prove this, we only need to directly calculate
\begin{equation*}
\begin{split}
Q'(\zeta)=&\lim_{h\rightarrow 0}\frac{Q(\zeta+h)-Q(\zeta)}{h}\\
=&\lim_{h\rightarrow 0}\frac{\left(T^*T+(\zeta+h)^2+(\zeta+h)\alpha\right)^{-1}-\left(T^*T-\zeta^2-\zeta\alpha\right)^{-1}}{h}\\
=&\lim_{h\rightarrow 0}\frac{1}{h}\left(T^*T+(\zeta+h)^2+(\zeta+h)\alpha\right)^{-1}\left(T^*T+\zeta^2+\zeta\alpha-T^*T-(\zeta+h)^2-(\zeta+h)\alpha\right)\\
&\quad\quad\quad\quad\quad\quad\quad\quad\quad\left(T^*T+\zeta^2+\zeta\alpha\right)^{-1}\\
=&\lim_{h\rightarrow 0}\frac{1}{h}\left(T^*T+(\zeta+h)^2+(\zeta+h)\alpha\right)^{-1}\left(-2h\zeta+h^2-h\alpha\right)\left(T^*T+\zeta^2+\zeta\alpha\right)^{-1}\\
=&-\left(T^*T+\zeta^2+\zeta\alpha\right)^{-1}(2\zeta+\alpha)\left(T^*T+\zeta^2+\zeta\alpha\right)^{-1}\\
=&-Q(\zeta)(2\zeta+\alpha)Q(\zeta).
\end{split}
\end{equation*}

\hfill\end{proof}

To derive trace formulas, we start with
\begin{equation*}
R(\zeta)=-(2\zeta+\alpha)Q(\zeta).
\end{equation*}
By the chain rule, we can calculate the derivatives of $R(\zeta)$ with respect to $\zeta$ as follows
\begin{equation*}
R'(\zeta)= -2Q(\zeta)+(2\zeta+\alpha)Q(\zeta)(2\zeta+\alpha)Q(\zeta),
\end{equation*}
and
\begin{equation*}
\begin{split}
R''(\zeta)=&2Q(\zeta)(2\zeta+\alpha)Q(\zeta)+2Q(\zeta)(2\zeta+\alpha)Q(\zeta)+2(2\zeta+\alpha)Q(\zeta)Q(\zeta)\\
&-2(2\zeta+\alpha)Q(\zeta)(2\zeta+\alpha)Q(\zeta)(2\zeta+\alpha)Q(\zeta)\\
=&2(2\zeta+\alpha)Q(\zeta)Q(\zeta)+2\big(2Q(\zeta)-(2\zeta+\alpha)Q(\zeta)(2\zeta+\alpha)Q(\zeta)\big)(2\zeta+\alpha)Q(\zeta)\\
=&-2R(\zeta)Q(\zeta)-2R'(\zeta)(2\zeta+\alpha)Q(\zeta).
\end{split}
\end{equation*}
We can continue and obtain
\begin{equation*}
\begin{split}
R'''(\zeta)=&-2R'(\zeta)Q(\zeta)+2R(\zeta)Q(\zeta)(2\zeta+\alpha)Q(\zeta)-2R''(\zeta)(2\zeta+\alpha)Q(\zeta)\\
&-4R'(\zeta)Q(\zeta)+2R'(\zeta)(2\zeta+\alpha)Q(\zeta)(2\zeta+\alpha)Q(\zeta)\\
=&-6R'(\zeta)Q(\zeta)-R''(\zeta)(2\zeta+\alpha)Q(\zeta)-2R''(\zeta)(2\zeta+\alpha)Q(\zeta)\\
=&-6R'(\zeta)Q(\zeta)-3R''(\zeta)(2\zeta+\alpha)Q(\zeta).
\end{split}
\end{equation*}
We observe that
\begin{equation*}
\frac{1}{2!}R''(\zeta)= -R(\zeta)Q(\zeta)-R'(\zeta)(2\zeta+\alpha)Q(\zeta),
\end{equation*}
\begin{equation*}
\frac{1}{3!}R'''(\zeta)= -R'(\zeta)Q(\zeta)-\frac{1}{2!}R''(\zeta)(2\zeta+\alpha)Q(\zeta).
\end{equation*}
Generally, we have the following recursive relation:
\begin{lemma}
Assume $\zeta\in\mathbb{R}\setminus \{0\}$ with $|\zeta|$ sufficiently small, such that $\zeta\in \rho(A(\alpha))$, then
\begin{equation}\label{recursiveR}
\frac{1}{n!}R^{(n)}(\zeta)= -\frac{1}{(n-2)!}R^{(n-2)}(\zeta)Q(\zeta)-\frac{1}{(n-1)!}R^{(n-1)}(\zeta)(2\zeta+\alpha)Q(\zeta).
\end{equation}
\end{lemma}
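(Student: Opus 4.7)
The plan is to prove the recursive identity by induction on $n\ge 2$, using the base case $n=2$ already computed in the excerpt, together with the lemma $Q'(\zeta) = -Q(\zeta)(2\zeta+\alpha)Q(\zeta)$. A useful preliminary observation is that this lemma can be rewritten as $Q'(\zeta) = Q(\zeta)R(\zeta)$, since $R(\zeta) = -(2\zeta+\alpha)Q(\zeta)$. For the inductive step, multiplying the claim by $n!$ it is equivalent (and more convenient) to prove
\[
R^{(n)}(\zeta) = -n(n-1)R^{(n-2)}(\zeta)Q(\zeta) - nR^{(n-1)}(\zeta)(2\zeta+\alpha)Q(\zeta),
\]
and show that differentiating this identity yields the analogous identity with $n$ replaced by $n+1$.

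Concretely, I would differentiate the RHS using the product rule, substituting $(2\zeta+\alpha)' = 2$ and $Q'(\zeta) = Q(\zeta)R(\zeta)$, which produces five terms:
\[
R^{(n+1)} = -n(n-1)R^{(n-1)}Q - n(n-1)R^{(n-2)}QR - nR^{(n)}(2\zeta+\alpha)Q - 2nR^{(n-1)}Q - nR^{(n-1)}(2\zeta+\alpha)QR.
\]
The two terms ending in $R^{(n-1)}Q$ combine to $-n(n+1)R^{(n-1)}Q$, which is already the desired first term at level $n+1$. The remaining three terms must collapse to $-(n+1)R^{(n)}(2\zeta+\alpha)Q$. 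For this, I would invoke the inductive hypothesis in the form
\[
-n(n-1)R^{(n-2)}Q = R^{(n)} + nR^{(n-1)}(2\zeta+\alpha)Q,
\]
and multiply it on the right by $R$; the $nR^{(n-1)}(2\zeta+\alpha)QR$ contribution then cancels the matching term, leaving $R^{(n)}R - nR^{(n)}(2\zeta+\alpha)Q$. Finally, the identity $R = -(2\zeta+\alpha)Q$ converts $R^{(n)}R$ into $-R^{(n)}(2\zeta+\alpha)Q$, giving exactly $-(n+1)R^{(n)}(2\zeta+\alpha)Q$.

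The main subtlety to watch out for is that $Q(\zeta)$ and the multiplication operator $(2\zeta+\alpha)$ do not commute (since $\alpha(x)$ is a non-constant multiplication operator that does not commute with $Q$), so every product rule expansion and every substitution from the inductive hypothesis must preserve the left-right order of factors. Once the bookkeeping of factor order is handled correctly, the cancellation is purely algebraic and completes the induction. The analytic regularity needed (differentiability of $Q(\zeta)$ and $R(\zeta)$ in trace norm for $\zeta$ in a small real neighborhood of $0$ within $\rho(A(\alpha))$) has already been noted in the preceding discussion, so no additional analytic input is required.
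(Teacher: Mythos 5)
Your proposal is correct and follows essentially the same route as the paper: induction on $n$, differentiating the level-$n$ identity, using $Q'(\zeta)=-Q(\zeta)(2\zeta+\alpha)Q(\zeta)$ (your $Q'=QR$ is the same fact), and then substituting the inductive hypothesis back in to collapse the terms $R^{(n-2)}Q(2\zeta+\alpha)Q$ and $R^{(n-1)}(2\zeta+\alpha)Q(2\zeta+\alpha)Q$ into $-R^{(n)}(2\zeta+\alpha)Q$. The five terms you obtain and the way you group them match the paper's computation exactly, up to working with the unnormalized derivatives instead of $\frac{1}{n!}R^{(n)}$.
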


\begin{proof}
We prove it by induction. We have already seen that \eqref{recursiveR} holds true for the case $n=2$. Assume that for $n$, the above \eqref{recursiveR} holds true. Then we proceed
\begin{equation*}
\begin{split}
&\frac{1}{(n+1)!}R^{(n+1)}(\zeta)\\
=&\frac{1}{n+1}\frac{1}{n!}\frac{\mathrm{d}}{\mathrm{d}\zeta}R^{(n)}(\zeta)\\
=&\frac{1}{n+1}\frac{1}{(n-2)!}R^{(n-2)}(\zeta)Q(\zeta)(2\zeta+\alpha)Q(\zeta)-\frac{1}{n+1}\frac{1}{(n-2)!}R^{(n-1)}(\zeta)Q(\zeta)\\
&-\frac{1}{n+1}\frac{1}{(n-1)!}R^{(n)}(\zeta)(2\zeta+\alpha)Q(\zeta)-\frac{2}{n+1}\frac{1}{(n-1)!}R^{(n-1)}(\zeta)Q(\zeta)\\
&+\frac{1}{n+1}\frac{1}{(n-1)!}R^{(n-1)}(\zeta)(2\zeta+\alpha)Q(\zeta)(2\zeta+\alpha)Q(\zeta)\\
=&\frac{1}{n+1}\left(\frac{1}{(n-2)!}R^{(n-2)}(\zeta)Q(\zeta)+\frac{1}{(n-1)!}R^{(n-1)}(\zeta)(2\zeta+\alpha)Q(\zeta)\right)(2\zeta+\alpha)Q(\zeta)\\
&-\frac{1}{n+1}\frac{1}{(n-1)!}R^{(n)}(\zeta)(2\zeta+\alpha)Q(\zeta)-\frac{1}{n+1}\frac{1}{(n-2)!}R^{(n-1)}(\zeta)Q(\zeta)\\
&-\frac{2}{n+1}\frac{1}{(n-1)!}R^{(n-1)}(\zeta)Q(\zeta)\\
=&-\frac{1}{(n+1)!}R^{(n)}(\zeta)(2\zeta+\alpha)Q(\zeta)-\frac{1}{n+1}\frac{1}{(n-1)!}R^{(n)}(\zeta)(2\zeta+\alpha)Q(\zeta)\\
&-\left(\frac{1}{n+1}+\frac{2}{(n+1)(n-1)}\right)\frac{1}{(n-2)!}R^{(n-1)}(\zeta)Q(\zeta)\\
=&-\frac{1}{n!}R^{(n)}(\zeta)(2\zeta+\alpha)Q(\zeta)-\frac{1}{(n-1)!}R^{(n-1)}(\zeta)Q(\zeta).
\end{split}
\end{equation*}
The lemma is proved. \hfill \end{proof}

We use the notation $R_n(\alpha)=\frac{1}{n!}R^{(n)}(0)$.
Evaluating the recursive relation \eqref{recursiveR} at $\zeta=0$ gives the following proposition.
\begin{proposition}\label{Rn}
The follow recursive formula holds:
\begin{align}\label{recursive_Rn}R_n(\alpha)= -R_{n-2}(\alpha)(T^*T)^{-1}-  R_{n-1}(\alpha)\alpha(T^*T)^{-1}
\end{align}
for $n=2,3,\cdots$, with
\begin{align*}
R_0(\alpha)&= R(0)=-\alpha Q(0)=-\alpha (T^*T)^{-1},\\
R_1(\alpha)&= R'(0)=-2Q(0)+\alpha Q(0)\alpha Q(0)=-2(T^*T)^{-1}+\alpha (T^*T)^{-1}\alpha (T^*T)^{-1}.\\
\end{align*}
\end{proposition}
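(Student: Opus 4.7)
The plan is to derive the proposition as a direct corollary of the recursive identity \eqref{recursiveR} in the preceding lemma, by specializing the parameter $\zeta$ to $0$. The key observation is that at $\zeta=0$ the operator $Q(\zeta)=(T^*T+\zeta^2+\zeta\alpha)^{-1}$ collapses to $(T^*T)^{-1}$, which exists by the coercivity/invertibility of $T^*T$ recorded earlier. Moreover the prefactor $2\zeta+\alpha$ that appears throughout the chain-rule expressions reduces to just $\alpha$. Hence evaluation at $\zeta=0$ converts the operator-valued identity \eqref{recursiveR} into a purely algebraic recursion for the coefficients $R_n(\alpha)=\frac{1}{n!}R^{(n)}(0)$.

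First I would establish the base cases. For $n=0$, one has $R_0(\alpha)=R(0)=-(2\cdot 0+\alpha)Q(0)=-\alpha(T^*T)^{-1}$, matching the stated formula. For $n=1$, I would use the already-computed derivative
\[
R'(\zeta)=-2Q(\zeta)+(2\zeta+\alpha)Q(\zeta)(2\zeta+\alpha)Q(\zeta),
\]
and substitute $\zeta=0$ to obtain $R_1(\alpha)=-2(T^*T)^{-1}+\alpha(T^*T)^{-1}\alpha(T^*T)^{-1}$. These are exactly the formulas claimed for $R_0$ and $R_1$, and they also certify that the recursive relation \eqref{recursiveR} is applicable at $\zeta=0$ (i.e., all derivatives involved are well defined, which is guaranteed because $0\in\rho(A(\alpha))$ under the running assumption).

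Next, for $n\geq 2$, I would specialize \eqref{recursiveR} at $\zeta=0$. The left-hand side becomes $R_n(\alpha)$ by definition, while on the right-hand side $Q(0)=(T^*T)^{-1}$ and $2\zeta+\alpha|_{\zeta=0}=\alpha$, giving
\[
R_n(\alpha)=-\frac{1}{(n-2)!}R^{(n-2)}(0)(T^*T)^{-1}-\frac{1}{(n-1)!}R^{(n-1)}(0)\alpha(T^*T)^{-1}.
\]
Recognizing $\frac{1}{(n-2)!}R^{(n-2)}(0)=R_{n-2}(\alpha)$ and $\frac{1}{(n-1)!}R^{(n-1)}(0)=R_{n-1}(\alpha)$ yields the stated recursion \eqref{recursive_Rn}.

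Because all the work has already been done in the previous lemma, the only thing to be careful about is the justification that $\zeta=0$ is a legitimate point at which to evaluate derivatives of $R(\zeta)$. Since $0\in\rho(A(\alpha))$ and the invertibility of $T^*T$ ensures $Q(\zeta)$ is analytic in $\zeta$ in a neighbourhood of $0$, every $R^{(n)}(0)$ exists as a trace-class operator; so no analytic subtlety blocks the substitution, and the proposition follows immediately.
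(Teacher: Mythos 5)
Your proposal is correct and follows essentially the same route as the paper, which likewise obtains the proposition by evaluating the recursive relation \eqref{recursiveR} at $\zeta=0$ using $Q(0)=(T^*T)^{-1}$, with the base cases read off from the explicit formulas for $R(0)$ and $R'(0)$. Your added remark justifying the evaluation at $\zeta=0$ (analyticity of $Q(\zeta)$ near $0$, since $0\in\rho(A(\alpha))$) is a point the paper leaves implicit, and is a welcome bit of extra care.
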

The following lemma is similar to \cite[Theorem 5.11]{gesztesy2011damped}.
\begin{lemma} \label{traceformula}
Denote $\lambda_j=\lambda_j(A(\alpha))$.
We have that for any $n=1,2,\cdots$,
\begin{align} \label{traceimagpart}
\mathrm{Im}\sum_{j\in J}\lambda_j(A(\alpha))^{-n-1}=0
\end{align}
 and
\begin{align}\label{tracerealpart}
\mathrm{trace}(R_{n}(\alpha))=\sum_{j\in J}\lambda_j(A(\alpha))^{-n-1}=\mathrm{Re}\sum_{j\in J}\lambda_j(A(\alpha))^{-n-1}.
\end{align}
\end{lemma}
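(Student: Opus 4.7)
The plan is to derive \eqref{tracerealpart} by combining Lidskii's trace theorem with an iterated version of the relation $\frac{\partial}{\partial\zeta}(A(\alpha)-\zeta)^{-1}\treq R'(\zeta)$ already established in the excerpt, and to deduce \eqref{traceimagpart} as a reality statement for the trace of $R_n(\alpha)$.

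Concretely, I would proceed in four steps. First, I would verify that $(A(\alpha)-\zeta)^{-n-1}$ lies in the trace class $\mathcal{L}_1(L^2(0,1)^2)$ for each $n\geq 1$ and $\zeta$ near $0$ in $\rho(A(\alpha))$; this follows from the fact that the only non-trace-class block of $(A(\alpha)-\zeta)^{-1}$ is the $\zeta$-independent identity in the lower-left corner, whose $\zeta$-derivatives vanish identically, combined with the compactness of $(T^*T)^{-1}$. Second, Lidskii's theorem applied to $(A(\alpha))^{-n-1}$ gives $\mathrm{trace}((A(\alpha))^{-n-1})=\sum_{j\in J}\lambda_j^{-n-1}$, counted with algebraic multiplicity. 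Third, iterating the argument that removed the identity block in the excerpt's proof of $\frac{\partial}{\partial\zeta}(A(\alpha)-\zeta)^{-1}\treq R'(\zeta)$ yields, for every $n\geq 1$,
\begin{equation*}
\frac{\partial^n}{\partial\zeta^n}(A(\alpha)-\zeta)^{-1}\treq R^{(n)}(\zeta),
\end{equation*}
and combining this with $\frac{\partial^n}{\partial\zeta^n}(A(\alpha)-\zeta)^{-1}=n!(A(\alpha)-\zeta)^{-n-1}$ and evaluating at $\zeta=0$ gives $\mathrm{trace}(R_n(\alpha))=\sum_{j\in J}\lambda_j^{-n-1}$. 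Fourth, the recursion \eqref{recursive_Rn} makes $R_n(\alpha)$ a polynomial expression in the self-adjoint operator $(T^*T)^{-1}$ and multiplication by the real-valued function $\alpha$, so $R_n(\alpha)$ commutes with complex conjugation on $L^2(0,1)$ and has real trace; this reality forces the imaginary part of $\sum_{j\in J}\lambda_j^{-n-1}$ to vanish, i.e.\ \eqref{traceimagpart}, and the second equality in \eqref{tracerealpart}. The same reality conclusion can alternatively be read off from the pairing $\lambda_{-j}=\overline{\lambda_j}$ with equal algebraic multiplicities, recalled just before the lemma.

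The main obstacle is the rigorous justification of the iterated $\treq$ relation and the commutation of $\partial_\zeta^n$ with the trace functional. The cleanest route is to write $(A(\alpha)-\zeta)^{-1}=\mathcal{M}(\zeta)+E$, where $E$ is the $\zeta$-independent matrix with $I$ in the lower-left entry and zeros elsewhere, and $\mathcal{M}(\zeta)$ is trace class with $\mathrm{trace}(\mathcal{M}(\zeta))=\mathrm{trace}(R(\zeta))$ by the cyclicity identity $\mathrm{trace}(-Q(\zeta)(\zeta+\alpha)-\zeta Q(\zeta))=\mathrm{trace}(-(2\zeta+\alpha)Q(\zeta))$. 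Since $E$ is constant in $\zeta$, one has $\partial_\zeta^n\mathcal{M}(\zeta)=\partial_\zeta^n(A(\alpha)-\zeta)^{-1}$ for $n\geq 1$, and the analyticity of $Q(\zeta)$ near $\zeta=0$ (from its Neumann expansion) ensures trace-norm differentiability, so $\partial_\zeta^n\,\mathrm{trace}(\mathcal{M}(\zeta))=\mathrm{trace}(\partial_\zeta^n\mathcal{M}(\zeta))=\mathrm{trace}(R^{(n)}(\zeta))$ with no passage-to-the-limit subtlety.
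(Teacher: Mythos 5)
Your proposal is correct and follows essentially the same route as the paper: the paper expands $\mathrm{trace}\bigl(\partial_\zeta(A(\alpha)-\zeta)^{-1}\bigr)$ as a power series in real $\zeta$ near $0$ and matches coefficients against $\sum_n \mathrm{trace}(R_n(\alpha))\zeta^{n-1}$, which is just the power-series packaging of your $n$-fold differentiation of the identity $\partial_\zeta(A(\alpha)-\zeta)^{-1}\treq R'(\zeta)$ evaluated at $\zeta=0$, with Lidskii and the removal of the constant identity block playing the same roles. Your step 4 (reality of $\mathrm{trace}(R_n(\alpha))$, equivalently the pairing $\lambda_{-j}=\overline{\lambda_j}$) is exactly what the paper leaves implicit to get \eqref{traceimagpart}, so if anything your write-up is slightly more complete on that point.
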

\begin{proof}
Assume $\zeta\in (-\varepsilon_0,\varepsilon_0)\setminus \{0\}$, $\varepsilon_0>0$ sufficiently small such that $ (-\varepsilon_0,\varepsilon_0)\subset \rho(A(\alpha))$. Then, we have
\begin{equation*}
\begin{split}
\mathrm{trace}\left(\frac{\partial}{\partial\zeta}[(A(\alpha)-\zeta)^{-1}]\right)=&\sum_{j\in J}\frac{\partial}{\partial\zeta}[(\lambda_j(A(\alpha))-\zeta)^{-1}]\\
=&\sum_{j\in J}\frac{\partial}{\partial\zeta}[\lambda_j(A(\alpha))^{-1}\left(1-\zeta\lambda_j(A(\alpha))^{-1}\right)^{-1}]\\
=&\sum_{j\in J}\frac{\partial}{\partial\zeta}[\sum_{n=0}^\infty\lambda_j(A(\alpha))^{-1}\left(\zeta\lambda_j(A(\alpha))^{-1}\right)^{n}]\\
=&\sum_{n=1}^\infty\left(\sum_{j\in J}\lambda_j(A(\alpha))^{-n-1}\right)\zeta^{n-1}.
\end{split}
\end{equation*}
Notices that
\begin{equation*}
\mathrm{trace}\left(\frac{\partial}{\partial\zeta}[(A(\alpha)-\zeta)^{-1}]\right)=\mathrm{trace}(R'(\zeta))=\sum_{n=1}^\infty \mathrm{trace}(R_n(\alpha))\zeta^{n-1}.
\end{equation*}
thus the lemma is proved. \hfill \end{proof}

\begin{remark}\label{cauchy_sum}
Note that
$
\sum_{j\in J}\lambda_j(A(\alpha))^{-1}
$
is not summable. However, it is proved in \cite[Theorem 5.11]{gesztesy2011damped} that
\begin{equation*}
\sum_{j\in J}\mathrm{Re}\,\lambda_j(A(\alpha))^{-1}=\sum_{j\in J}\mathrm{Re}\frac{\overline{\lambda_j}}{|\lambda_j|^2}=\mathrm{trace}(R_0(\alpha)),
\end{equation*}
where the sum is convergent.
Also, it is clear that $(\sum_{j=-N}^{-1}+\sum_{j=1}^N)\mathrm{Im}\,\lambda_j^{-1}=0$ for any $N$, since $\mathrm{Im}\,\lambda_{-j}=-\mathrm{Im}\,\lambda_j$ Therefore, the identities \eqref{traceimagpart} and \eqref{tracerealpart} are also valid for $n=0$ when using the regularized sum $\lim_{N\rightarrow+\infty}(\sum_{j=-N}^{-1}+\sum_{j=1}^N)$.
\end{remark}

One can use Proposition \ref{Rn} and Lemma \ref{traceformula} to derive an infinite sequence of trace formulas. Let us write down a few ones.
\begin{equation*}
\begin{split}
\sum_{j\in J}\lambda_j(A(\alpha))^{-1}=&\mathrm{trace}(R_0(\alpha))=\mathrm{trace}(-\alpha (T^*T)^{-1}),\\
\sum_{j\in J}\lambda_j(A(\alpha))^{-2}=&\mathrm{trace}(R_1(\alpha))=\mathrm{trace}(-2(T^*T)^{-1}+\alpha (T^*T)^{-1}\alpha (T^*T)^{-1}),\\
\sum_{j\in J}\lambda_j(A(\alpha))^{-3}=&\mathrm{trace}(R_2(\alpha))\\
=&\mathrm{trace}(\alpha(T^*T)^{-2}+2(T^*T)^{-1}\alpha(T^*T)^{-1}-\alpha(T^*T)^{-1}\alpha(T^*T)^{-1}\alpha(T^*T)^{-1})\\
=&\mathrm{trace}(3(T^*T)^{-1}\alpha(T^*T)^{-1}-\alpha(T^*T)^{-1}\alpha(T^*T)^{-1}\alpha(T^*T)^{-1}),\\
\sum_{j\in J}\lambda_j(A(\alpha))^{-4}=&\mathrm{trace}(R_3(\alpha))\\
=&\mathrm{trace}\Big(2(T^*T)^{-2}-\alpha(T^*T)^{-1}\alpha(T^*T)^{-2}-\alpha(T^*T)^{-2}\alpha(T^*T)^{-1}\\
&\quad\quad\quad -2(T^*T)^{-1}\alpha(T^*T)^{-1}\alpha(T^*T)^{-1}\\
&\quad\quad\quad+\alpha(T^*T)^{-1}\alpha(T^*T)^{-1}\alpha(T^*T)^{-1}\alpha(T^*T)^{-1}\Big)\\
=&\mathrm{trace}\left(2(T^*T)^{-2}-4(T^*T)^{-1}\alpha(T^*T)^{-1}\alpha (T^*T)^{-1}\right.\\
&\quad\quad\quad+\left.\alpha(T^*T)^{-1}\alpha(T^*T)^{-1}\alpha(T^*T)^{-1}\alpha(T^*T)^{-1}\right),\\
etc.\quad\quad\quad\quad\quad&
\end{split}
\end{equation*}
We see that the above trace formulas establish a very clear relation between the damping coefficient $\alpha$ and the spectrum of $A(\alpha)$.

We propose an inversion scheme for the map
\begin{equation}\label{trace_map}
\mathcal{F}:\alpha\rightarrow \{\mathbf{t}_n(\alpha)\}_{n=0}^\infty:=\{\mathrm{trace}(R_n(\alpha))\}_{n=0}^\infty=\{\sum_{j\in J}\lambda_j(A(\alpha))^{-n-1}\}_{n=0}^\infty
\end{equation}
for the recovery of $\alpha(x)$.

\section{Injectivity of a linearized map}\label{sec3}
The unique determination of an even damping $\alpha(x)=\alpha(1-x)$ from the Dirichlet eigenvalues $\{\lambda_j(A(\alpha))\}_{j\in J}$ is known (cf. \cite{Cox2011reconstructing}). However, it is not clear  whether there is a one-to-one correspondence between $\{\lambda_j(A(\alpha))\}_{j\in J}$ and $\{\sum_{j\in J}\lambda_j(A(\alpha))^{-n-1}\}_{n=0}^\infty$. It is also not clear whether the map \eqref{trace_map} is injective. In the next section, we consider the linearization of the map $\mathcal{F}$ at constant damping and show the injectivity of the linearized map.
\begin{theorem}
Assume $T=T_{\min}$. The Fr\'echet derivative of the map $\mathcal{F}$ at $\alpha=\alpha_0$, where $\alpha_0$ is a constant,
\begin{equation*}
\mathcal{F}'[\alpha_0]:\,\delta\alpha\rightarrow \{\mathbf{t}_n'[\alpha_0](\delta\alpha)\}_{n=0}^\infty=\{\mathrm{trace}(R_n'[\alpha_0](\delta\alpha))\}_{n=0}^\infty
\end{equation*}
is injective for $\delta\alpha(x)=\delta\alpha(1-x)$.
\end{theorem}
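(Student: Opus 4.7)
The plan is to translate the condition $\mathcal{F}'[\alpha_0](\delta\alpha)=0$ into statements about the diagonal matrix elements $c_k:=\int_0^1\delta\alpha(x)\,\phi_k(x)^{2}\,\mathrm{d}x$, where $\phi_k(x)=\sqrt{2}\sin(k\pi x)$ are the Dirichlet eigenfunctions, and then to exploit the evenness of $\delta\alpha$ together with $L^{2}$-integrability to conclude. At constant damping $\alpha_0$, the reduced eigenvalue equation \eqref{eq_eigenvector} with Dirichlet boundary conditions is explicitly solvable: the eigenvalues of $A(\alpha_0)$ split as $\lambda^{(0)}_{\pm k}=\tfrac{1}{2}(-\alpha_0\pm\nu_k)$, $\nu_k=\sqrt{\alpha_0^{2}-4k^{2}\pi^{2}}$, with common eigenfunction $\phi_k$. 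Pairing the $\varepsilon$-derivative of \eqref{eq_eigenvector} along the path $\alpha_0+\varepsilon\delta\alpha$ with $\phi_k$ (integrating by parts and using Dirichlet boundary data to annihilate the left-hand side) gives $\lambda^{(1)}_{\pm k}=\mp\lambda^{(0)}_{\pm k}\,c_k/\nu_k$, and differentiating $\mathbf{t}_n=\sum_{j\in J}\lambda_j^{-n-1}$ and summing the $\pm k$ contributions yields
\begin{equation*}
\mathbf{t}_n'[\alpha_0](\delta\alpha)=-(n+1)\sum_{k\ge 1}\frac{c_k\,S_n(k)}{(k\pi)^{2n+2}},\qquad S_n(k):=\sum_{i=0}^{n}(\lambda^{(0)}_{+k})^{i}(\lambda^{(0)}_{-k})^{n-i},
\end{equation*}
after using $\lambda^{(0)}_{+k}\lambda^{(0)}_{-k}=k^{2}\pi^{2}$.

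Since $\lambda^{(0)}_{+k}+\lambda^{(0)}_{-k}=-\alpha_0$, the symmetric polynomial $S_n(k)$ equals $(k\pi)^{n}U_n(-\alpha_0/(2k\pi))$, with $U_n$ the Chebyshev polynomial of the second kind; expanded, this reads $S_n(k)=(-\alpha_0)^{n}+\sum_{j=1}^{\lfloor n/2\rfloor}\gamma_j^{(n)}\alpha_0^{n-2j}(k\pi)^{2j}$ for universal constants $\gamma_j^{(n)}$. Setting $\sigma_m:=\sum_{k\ge 1}c_k/(k\pi)^{2m}$, the identity $\mathbf{t}_n'[\alpha_0](\delta\alpha)=0$ then becomes a linear relation among $\sigma_{n+1},\sigma_n,\ldots,\sigma_{\lceil(n+2)/2\rceil}$, in which the coefficient of the leading term $\sigma_{n+1}$ is exactly $-(n+1)(-\alpha_0)^{n}$. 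Provided $\alpha_0\ne 0$ this pivot is nonzero, so an induction on $n$, starting from $\mathbf{t}_0'[\alpha_0](\delta\alpha)=-\sigma_1=0$, recursively gives $\sigma_m=0$ for every $m\ge 1$. (The case $\alpha_0=0$ is easier: $S_{2m+1}\equiv 0$, while $S_{2m}(k)=(-1)^{m}(k\pi)^{2m}$ makes $\mathbf{t}_{2m}'\propto\sigma_{m+1}$ directly.)

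To extract $c_k=0$ for every $k\ge 1$ from the vanishing of all $\sigma_m$, I consider the meromorphic function
\begin{equation*}
F(z):=\sum_{k\ge 1}\frac{c_k}{k^{2}\pi^{2}-z},
\end{equation*}
which converges locally uniformly on $\mathbb{C}\setminus\{k^{2}\pi^{2}:k\ge 1\}$ because $\delta\alpha\in L^{\infty}$ makes $(c_k)$ bounded. Its Taylor expansion at $z=0$ is exactly $\sum_{m\ge 1}\sigma_m\,z^{m-1}\equiv 0$, so $F$ vanishes in a neighbourhood of the origin; since its domain is connected, the identity theorem forces $F\equiv 0$, and the residue at the simple pole $z=k^{2}\pi^{2}$ is $-c_k$, which must therefore vanish. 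Finally, I invoke the evenness assumption: expanding $\delta\alpha$ in the Fourier basis $\{1,\cos(2k\pi x),\sin(2k\pi x)\}_{k\ge 1}$ of $L^{2}(0,1)$, the condition $\delta\alpha(x)=\delta\alpha(1-x)$ eliminates the sine coefficients, so $\delta\alpha=d_0+\sum_{k\ge 1}d_k\cos(2k\pi x)$; the identity $\phi_k^{2}=1-\cos(2k\pi x)$ then converts $c_k=0$ into $d_k=2d_0$ for every $k\ge 1$. Since $\delta\alpha\in L^{2}(0,1)$, Parseval forces $d_0=0$, whence $d_k\equiv 0$ and $\delta\alpha\equiv 0$. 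The main obstacle I foresee is the triangular decoupling in the middle step: one must verify via the explicit Chebyshev identity that the coefficient matrix relating $\{\mathbf{t}_n'[\alpha_0](\delta\alpha)\}_{n\ge 0}$ and $\{\sigma_m\}_{m\ge 1}$ is genuinely upper triangular with nonzero diagonal for every $n$; the exceptional values $\alpha_0=2k\pi$, at which $\nu_k=0$ produces a Jordan block in $A(\alpha_0)$, would need separate treatment by continuity in $\alpha_0$, which is legitimate since the derived formulas for $\mathbf{t}_n'[\alpha_0]$ remain analytic there.
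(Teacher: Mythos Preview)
Your proposal is correct and reaches the same conclusion by a genuinely different route. The paper differentiates the recursion $R_n=-R_{n-2}(T^*T)^{-1}-R_{n-1}\alpha(T^*T)^{-1}$ at the operator level and shows by induction that $\mathrm{trace}(R_{n-1}'[\alpha_0](\delta\alpha))$ is a triangular combination of the quantities $\sigma_m:=\mathrm{trace}(\delta\alpha(-\Delta_D)^{-m})$ with leading coefficient proportional to $n\alpha_0^{n-1}$; you instead compute $\mathbf{t}_n'$ by first-order eigenvalue perturbation and recognise the same triangular structure through the explicit Chebyshev representation $S_n(k)=(k\pi)^nU_n(-\alpha_0/(2k\pi))$. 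Both approaches then need $c_k=\int_0^1\delta\alpha\,\phi_k^2=0$ from the vanishing of all $\sigma_m$: the paper does this by repeated rescaling and $n\to\infty$ limits, peeling off one Fourier mode at a time, whereas your meromorphic generating function $F(z)=\sum_k c_k/(k^2\pi^2-z)$ together with the identity theorem settles it in one stroke. Your Parseval argument to force $d_0=0$ from $d_k=2d_0$ replaces the paper's appeal to the Riemann--Lebesgue lemma, to the same effect. The paper's path has the advantage of staying within the operator-recursion framework already developed and of never invoking differentiability of individual eigenvalues, so the Jordan-block values $\alpha_0=2k\pi$ require no separate treatment; your path is more explicit about the spectral structure, surfaces the Chebyshev identity, and the generating-function step is arguably cleaner than the peeling limit.
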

\begin{proof}
We calculate
\begin{equation*}
R_0'[\alpha_0](\delta\alpha)=-\delta\alpha(T^*T)^{-1},
\end{equation*}
\begin{equation*}
R_0(\alpha_0)= -\alpha_0(T^*T)^{-1},
\end{equation*}
and
\begin{equation*}
R_1'[\alpha_0](\delta\alpha)= \alpha_0\delta\alpha(T^*T)^{-2}+\alpha_0(T^*T)^{-1}\delta\alpha(T^*T)^{-1},
\end{equation*}
\begin{equation*}
R_1(\alpha_0)= \alpha_0^2(T^*T)^{-2}-2(T^*T)^{-1}.
\end{equation*}
We claim that
\begin{equation*}
\begin{split}
R_{n-1}'[\alpha_0](\delta\alpha)=& (-1)^n\alpha_0^{n-1}\Big(\delta\alpha(T^*T)^{-n}+(T^*T)^{-1}\delta\alpha(T^*T)^{-n+1}+\cdots+  \\
&(T^*T)^{-n+1}\delta\alpha(T^*T)^{-1}\Big) + \sum_{m+\ell\leq n-1}c_{n-1,m,\ell}(\alpha_0)(T^*T)^{-m}\delta\alpha(T^*T)^{-\ell},\\
R_{n-1}(\alpha_0)= &(-1)^n\alpha_0^n(T^*T)^{-n}+\sum_{k\leq n-1}d_{n-1,k}(\alpha_0)(T^*T)^{-k},
\end{split}
\end{equation*}
and prove by induction. Here $c_{n-1,m,\ell}(\alpha_0)$ and $d_{n-1,k}(\alpha_0)$ are some constants depending on $\alpha_0$. 
Using the recursive formula \eqref{recursive_Rn}, we have
\begin{equation*}
\begin{split}
R_n'[\alpha_0](\delta\alpha)=& -R_{n-2}'[\alpha_0](\delta\alpha)(T^*T)^{-1}-  R_{n-1}'[\alpha_0](\delta\alpha)\alpha_0(T^*T)^{-1}-R_{n-1}(\alpha_0)\delta\alpha(T^*T)^{-1}\\
= & (-1)^{n+1}\alpha_0^{n-1}\Big(\delta\alpha(T^*T)^{-n}+(T^*T)^{-1}\delta\alpha(T^*T)^{-n+1}+\cdots \\
& + (T^*T)^{-n+1}\delta\alpha(T^*T)^{-1}\Big)\alpha_0(T^*T)^{-1}+(-1)^{n+1}\alpha_0^n(T^*T)^{-n}\delta\alpha(T^*T)^{-1} \\
&+\sum_{m+\ell\leq n}c_{n,m,\ell}(\alpha_0)(T^*T)^{-m}\delta\alpha(T^*T)^{-\ell}\\
=&(-1)^{n+1}\alpha_0^n\Big(\delta\alpha(T^*T)^{-n-1}+(T^*T)^{-1}\delta\alpha(T^*T)^{-n}+\cdots\\
&+ (T^*T)^{-n}\delta\alpha(T^*T)^{-1}\Big)+\sum_{m+\ell\leq n}c_{n,m,\ell}(\alpha_0)(T^*T)^{-m}\delta\alpha(T^*T)^{-\ell}.
\end{split}
\end{equation*}
Similarly, we can prove
\begin{equation*}
R_{n}(\alpha_0)= (-1)^{n+1}\alpha_0^{n+1}(T^*T)^{-n-1}+\sum_{k\leq n}d_{n,k}(\alpha_0)(T^*T)^{-k}.
\end{equation*}
The claim is proved. This implies that
 \begin{equation*}
 \mathrm{trace}(R_{n-1}'[\alpha_0](\delta\alpha))=(-1)^{n+1}n\alpha_0^{n-1}\mathrm{trace}(\delta\alpha(T^*T)^{-n})+\sum_{k=1}^{n-1}c_{n,k}(\alpha_0)\mathrm{trace}(\delta\alpha(T^*T)^{-k})
 \end{equation*}
 with some constants $c_{n,k}(\alpha_0)$ depending on $\alpha_0$.

Therefore, if $\mathcal{F}'[\alpha_0](\delta\alpha)=0$, we have $\mathrm{trace}(R_{n-1}'[\alpha_0](\delta\alpha))=0$ for $n=1,2,\cdots$, and thus
\begin{equation*}
\mathrm{trace}(\delta\alpha (T^*T)^{-n})=\mathrm{trace}(\delta\alpha (-\Delta_D)^{-n})=0.
\end{equation*}
Equivalently, we have
\begin{equation*}
\int_0^1g_n(x,x)\delta\alpha(x)\mathrm{d}x=0,
\end{equation*}
where $g_n(x,y)$ is the Green's function for $(-\Delta_D)^n$, and by \textit{Mercer's Theorem} (see, for example, \cite{Lax}) we have
\begin{equation*}
g_n(x,y)=\sum_{m=1}^\infty\frac{2}{m^{2n}\pi^{2n}}\sin m\pi x\,\sin m\pi y,
\end{equation*}
and thus
\begin{equation*}
g_n(x,x)=\sum_{m=1}^\infty\frac{2}{m^{2n}\pi^{2n}}(\sin m\pi x)^2=\sum_{m=1}^\infty\frac{2}{m^{2n}\pi^{2n}}\frac{1-\cos 2m\pi x}{2}.
\end{equation*}
Notice that
\begin{equation*}
0=\lim_{n\rightarrow+\infty}\pi^{2n}\int_0^1g_n(x,x)\delta\alpha(x)\mathrm{d}x=\int_0^1\delta\alpha(x)(1-\cos 2\pi x)\mathrm{d}x.
\end{equation*}
Then
\begin{equation*}
\int_0^1\left(\sum_{m=2}^\infty\frac{2}{m^{2n}\pi^{2n}}\frac{1-\cos 2m\pi x}{2}\right)\delta\alpha(x)\mathrm{d}x=0,
\end{equation*}
for every $n$. Then
\begin{equation*}
0=\lim_{n\rightarrow+\infty}2^{2n}\pi^{2n}\int_0^1\left(\sum_{m=2}^\infty\frac{2}{m^{2n}\pi^{2n}}\frac{1-\cos 2m\pi x}{2}\right)\delta\alpha(x)\mathrm{d}x=\int_0^1\delta\alpha(x)(1-\cos 4\pi x)\mathrm{d}x.
\end{equation*}
Continuing this process, we have
\begin{equation*}
\int_0^1\delta\alpha(x)(1-\cos 2m\pi x)\mathrm{d}x=0
\end{equation*}
for each $m$. Taking the limit $m\rightarrow+\infty$, and invoking the \textit{Riemann-Lebesgue Lemma}
\begin{equation*}\lim_{m\rightarrow +\infty}\int_{0}^1\delta\alpha(x)\cos 2m\pi x\mathrm{d}x=0,\end{equation*}
 we have
\begin{equation*}
\int_0^1\delta\alpha(x)\mathrm{d}x=0.
\end{equation*}
Thus we end up with
\begin{equation*}
\int_0^1\delta\alpha(x)\cos 2m\pi x\mathrm{d}x=0
\end{equation*}
for $m=0,1,2,\cdots$. Then $\delta\alpha=0$, and the injectivity is proved. \hfill \end{proof} 


\section{Inversion Algorithm}\label{sec4}
We derive an algorithm for recovering $\alpha(x)$ from the spectral of $A(\alpha)$ based on the trace formulas derived in Section \ref{sec2}. We only describe the algorithm for the operator with Dirichlet boundary condition, that is $T=T_{\min}$. Other boundary conditions can be dealt with in the same way.

Assume $\{\mu_\ell,\phi_\ell(x)\}_{\ell=1}^\infty$ are the eigenvalues and eigenfunctions of $-\Delta_D={T_{\min}}^*T_{\min}$, where
\begin{equation*}
-\Delta_D\phi_\ell=-\phi_\ell''=\mu_\ell\phi_\ell,
\end{equation*}
\begin{equation*}
\mu_\ell=\ell^2\pi^2,\quad\phi_\ell(x)=\sqrt{2}\sin \ell\pi x.
\end{equation*}
Define the unitary operator $\mathbf{W}:L^2(0,1)\rightarrow l^2$ such that
\begin{equation*}
\mathbf{W}f=\{f_1,f_2,\cdots\},
\end{equation*}
where $f$ admits the decomposition under the basis $\{\phi_\ell\}_{\ell=1}^\infty$ of $L^2(0,1)$:
\begin{equation*}
f(x)=\sum_{\ell=1}^\infty f_\ell\phi_\ell(x).
\end{equation*}
Then we have the spectral decomposition of $(-\Delta_D)^{-1}$
as
\begin{equation*}
(-\Delta_D)^{-1}=\mathbf{W}^{-1}\mathrm{diag}\left(\mu_1^{-1},\,\mu_2^{-1},\,\cdots,\mu_\ell^{-1},\cdots\right)\mathbf{W}.
\end{equation*}
Similarly, the multiplication operator $M_\alpha:L^2(0,1)\rightarrow L^2(0,1)$: $(M_\alpha f)(x)= \alpha(x) f(x)$ also can be decomposed as follows
\begin{equation*}
M_\alpha=\mathbf{W}^{-1}\mathbf{M}(\alpha)\mathbf{W},
\end{equation*}
where
\begin{equation*}
(\mathbf{M}(\alpha))_{ij}=\int_0^1\alpha(x)\phi_i(x)\phi_j(x)\mathrm{d}x.
\end{equation*}
To see this, one only needs to notice
\begin{equation*}
\begin{split}
(M_\alpha f)(x)=&\sum_{i=1}^\infty\left(\int_0^1\alpha f(x)\phi_i(x)\mathrm{d}x\right)\phi_i(x)\\
=&\sum_{i=1}^\infty\left(\int_0^1\alpha(x) \sum_{j=1}^\infty f_j\phi_j(x)\phi_i(x)\mathrm{d}x\right)\phi_i(x)\\
=&\sum_{i=1}^\infty\left(\sum_{j=1}^\infty\left(\int_0^1\alpha(x)\phi_j(x)\phi_i(x)\mathrm{d}x\right)f_j\right)\phi_i(x).
\end{split}
\end{equation*}

Denote
\begin{equation*}
(\mathbf{M}_1(\alpha))_{ij}=-\mu_j^{-1}\int_0^1\alpha(x)\phi_{i}(x)\phi_{j}(x)\mathrm{d}x,
\end{equation*}
\begin{equation*}
\mathbf{M}_2(\alpha)=2\mathbf{M}_1(1)+\mathbf{M}_1(\alpha)^2.
\end{equation*}
By direct calculation, it is easy to see that
\begin{align*}
R_0(\alpha)&=-\alpha(T^{*}T)^{-1}=-M_\alpha (-\Delta_D)^{-1}\\
&= -\mathbf{W}^{-1}\mathbf{M}(\alpha)\mathbf{W} \mathbf{W}^{-1}\mathrm{diag}\left(\mu_1^{-1},\,\mu_2^{-1},\,\cdots,\mu_n^{-1},\cdots\right)\mathbf{W}\\
&=  \mathbf{W}^{-1}\mathbf{M}_1(\alpha)\mathbf{W},
\end{align*}
and therefore
\begin{equation*}
\begin{split}
R_1(\alpha)=&-2(-\Delta_D)^{-1}+M_\alpha (-\Delta_D)^{-1} M_\alpha (-\Delta_D)^{-1}\\
=& 2\mathbf{W}^{-1}\mathbf{M}_1(1)\mathbf{W} + \mathbf{W}^{-1}\mathbf{M}_1(\alpha)\mathbf{W} \mathbf{W}^{-1}\mathbf{M}_1(\alpha)\mathbf{W}\\
=&\mathbf{W}^{-1}\mathbf{M}_2(\alpha)\mathbf{W}.
\end{split}
\end{equation*}
Generally, we define
\begin{align}
\mathbf{M}_{n}(\alpha)=\mathbf{M}_{n-1}(\alpha)\mathbf{M}_1(\alpha)+\mathbf{M}_{n-2}(\alpha)\mathbf{M}_1(1).
\end{align}
Then, one can verify that
\begin{align}
R_{n-1}(\alpha)=\mathbf{W}^{-1}\mathbf{M}_{n}(\alpha)\mathbf{W}.
\end{align}

Since $\mathbf{W}$ is a unitary operator which can be viewed as a rotation transformation and keeps eigenvalues invariant when both $\mathbf{W}$ and $\mathbf{W}^{-1}$ are applied, thus we have
\begin{proposition}
The following relations hold:
\begin{align}\mathrm{trace}(\mathbf{M}_{n}(\alpha))=\mathrm{trace}(R_{n-1}(\alpha))=\sum_{j\in J}\lambda_j^{-n}, \mbox{ for } n= 1,2,\cdots. \label{discretetrace}\end{align}
\end{proposition}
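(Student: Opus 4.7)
The plan is to decompose the statement into two equalities: the inner operator identity $R_{n-1}(\alpha)=\mathbf{W}^{-1}\mathbf{M}_n(\alpha)\mathbf{W}$ (which at the trace level will yield $\mathrm{trace}(\mathbf{M}_n(\alpha))=\mathrm{trace}(R_{n-1}(\alpha))$), and the spectral identity $\mathrm{trace}(R_{n-1}(\alpha))=\sum_{j\in J}\lambda_j^{-n}$, already supplied by Lemma \ref{traceformula}. So the only real work is to verify the operator identity stated (without proof) just above the Proposition.

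First I would prove $R_{n-1}(\alpha)=\mathbf{W}^{-1}\mathbf{M}_n(\alpha)\mathbf{W}$ by induction on $n$. The base cases $n=1$ and $n=2$ are already computed in the text, using the explicit forms of $R_0(\alpha)$ and $R_1(\alpha)$ from Proposition \ref{Rn} together with the spectral decomposition $(-\Delta_D)^{-1}=\mathbf{W}^{-1}\,\mathrm{diag}(\mu_\ell^{-1})\,\mathbf{W}$ and the matrix representation $M_\alpha=\mathbf{W}^{-1}\mathbf{M}(\alpha)\mathbf{W}$ of the multiplication operator. These show $R_0(\alpha)=\mathbf{W}^{-1}\mathbf{M}_1(\alpha)\mathbf{W}$ and $R_1(\alpha)=\mathbf{W}^{-1}\mathbf{M}_2(\alpha)\mathbf{W}$.

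For the inductive step, the key observation is that conjugation by $\mathbf{W}$ is an algebra homomorphism, and under this conjugation $\mathbf{M}_1(\alpha)$ corresponds to $-M_\alpha(T^*T)^{-1}$ and $\mathbf{M}_1(1)$ corresponds to $-(T^*T)^{-1}$. Applying the defining recursion $\mathbf{M}_n(\alpha)=\mathbf{M}_{n-1}(\alpha)\mathbf{M}_1(\alpha)+\mathbf{M}_{n-2}(\alpha)\mathbf{M}_1(1)$ and invoking the inductive hypothesis at levels $n-1$ and $n-2$ yields
\[
\mathbf{W}^{-1}\mathbf{M}_n(\alpha)\mathbf{W}=-R_{n-2}(\alpha)\,\alpha(T^*T)^{-1}-R_{n-3}(\alpha)(T^*T)^{-1},
\]
which is exactly the right-hand side of the recursion \eqref{recursive_Rn} shifted by one index, hence equals $R_{n-1}(\alpha)$. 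This closes the induction.

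Finally, since $\mathbf{W}$ is unitary, the trace is invariant under conjugation by $\mathbf{W}$, so $\mathrm{trace}(\mathbf{M}_n(\alpha))=\mathrm{trace}(R_{n-1}(\alpha))$, and Lemma \ref{traceformula} (with the index $n-1$ playing the role of $n$ there) supplies $\mathrm{trace}(R_{n-1}(\alpha))=\sum_{j\in J}\lambda_j^{-n}$. There is no substantive obstacle: the proof is essentially index bookkeeping, the main point being that the matrix recursion for $\mathbf{M}_n(\alpha)$ was engineered precisely to mirror the operator recursion for $R_{n-1}(\alpha)$ under the unitary change of basis $\mathbf{W}$.
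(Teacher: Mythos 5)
Your proposal is correct and follows essentially the same route as the paper: the paper asserts the conjugation identity $R_{n-1}(\alpha)=\mathbf{W}^{-1}\mathbf{M}_{n}(\alpha)\mathbf{W}$ with ``one can verify'' and then invokes unitary invariance of the trace together with Lemma~\ref{traceformula}, and your induction simply fills in that verification, correctly matching the matrix recursion for $\mathbf{M}_n$ against the index-shifted operator recursion \eqref{recursive_Rn}. The only detail you gloss over is the case $n=1$, where $\sum_{j\in J}\lambda_j^{-1}$ must be understood as the regularized (symmetric) sum of Remark~\ref{cauchy_sum} --- a caveat the paper itself only records in the remark following the proposition.
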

When $n=1$ in the above formula, we need to use the regularized summation as in Remark \ref{cauchy_sum}.
\begin{remark}
The proposition gives an explicit expression between the damping coefficient $\alpha(x)$ and the spectral data $\{\lambda_j\}_{j\in J}$ in terms of a series of Fredholm equations. For example if $n=1$, then we have
\begin{align*}
\sum_{\ell =1}^\infty-\mu_\ell^{-1}\int_{0}^{1}\alpha(x)\phi_\ell^2(x)dx = \sum_{j\in J}\lambda_j^{-1}.
\end{align*}

Solving an infinite series of Fredholm integral equations \eqref{discretetrace} is severely ill-posed. The main reason is that
\begin{align}\label{least_square}
\sum_{n=1}^{N}\left(\sum_{j\in J}\lambda_j^{-n}(\alpha) - \sum_{j\in J}\lambda_j^{-n}(\alpha_{true})\right)^2
\end{align}
 is not a good choice to measure the misfit. As in \cite{XZ1}, we need to use a sequence of ``proper" polynomials $\{T_n\}_{n=1}^N$ and measure the misfit as
 \begin{align}\label{residual_measure}
\sum_{n=1}^{N}\left(\sum_{j\in J}T_n(\lambda_j^{-1}(\alpha)) - \sum_{j\in J}T_n(\lambda_j^{-1}(\alpha_{true}))\right)^2.
\end{align}

\end{remark}
\par Before proceeding to seeking proper polynomials, which is critical to the success of the inversion, let us first summarize some properties of the spectrum of $A(\alpha)$. We refer to \cite{BORISOV2009,CoXZua1994} for more details.


Assume $\alpha_0=\int_0^1\alpha(x)\mathrm{d}x$. Then
\begin{enumerate}
\item The spectrum of $A(\alpha)$ is symmetric about the real axis, i.e., $\sigma_p(A(\alpha))=\overline{\sigma_p(A(\alpha))}$;
\item The spectrum of $A(\alpha)$ is contained in
\begin{equation*}
\{\lambda\in\mathbb{C}:\,|\lambda|\geq \pi,\, -b\leq\mathrm{Re}\,\lambda\leq -a\}\cup[-b-(b^2-\pi^2)^{1/2}_+,-a+(b^2-\pi^2)^{1/2}_+];
\end{equation*}
\item The eigenvalue $\lambda_j(A(\alpha))$ has the asymptotic behavior
\begin{equation}\label{eigenvalue_asymptotic}
\lambda_j(A(\alpha))=-\frac{\alpha_0}{2}+j \pi\i +\mathcal{O}\left(\frac{1}{j}\right).
\end{equation}
\end{enumerate}
The distribution of a sample damping coefficient is depicted in Figure \ref{eigen_distribution}.

\begin{figure}[t!]
    \begin{subfigure}{0.32\textwidth}
      \includegraphics[width=\textwidth]{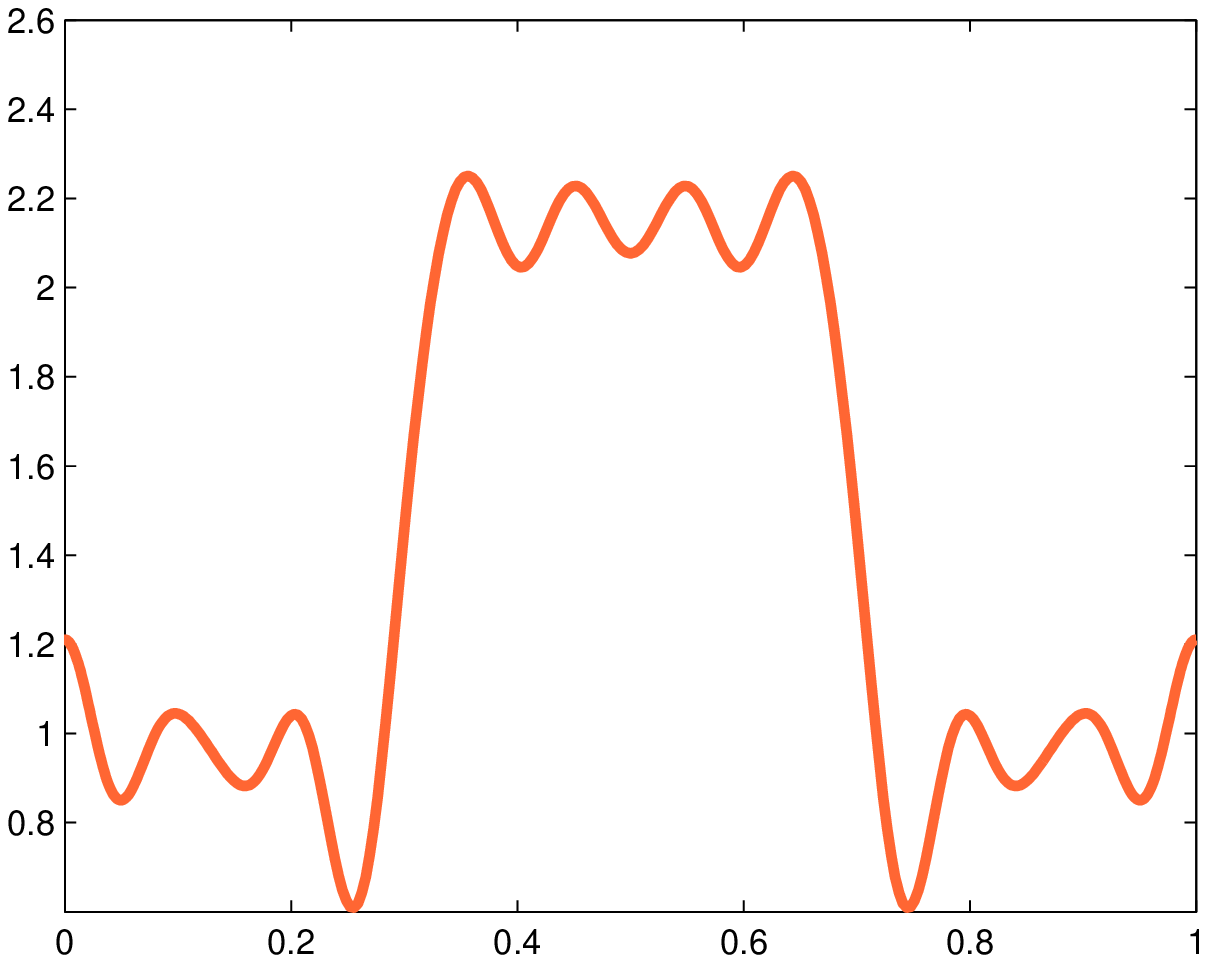}
      \caption{ the damping coefficient}
    \end{subfigure}
    \begin{subfigure}{0.32\textwidth}
      \includegraphics[width=\textwidth]{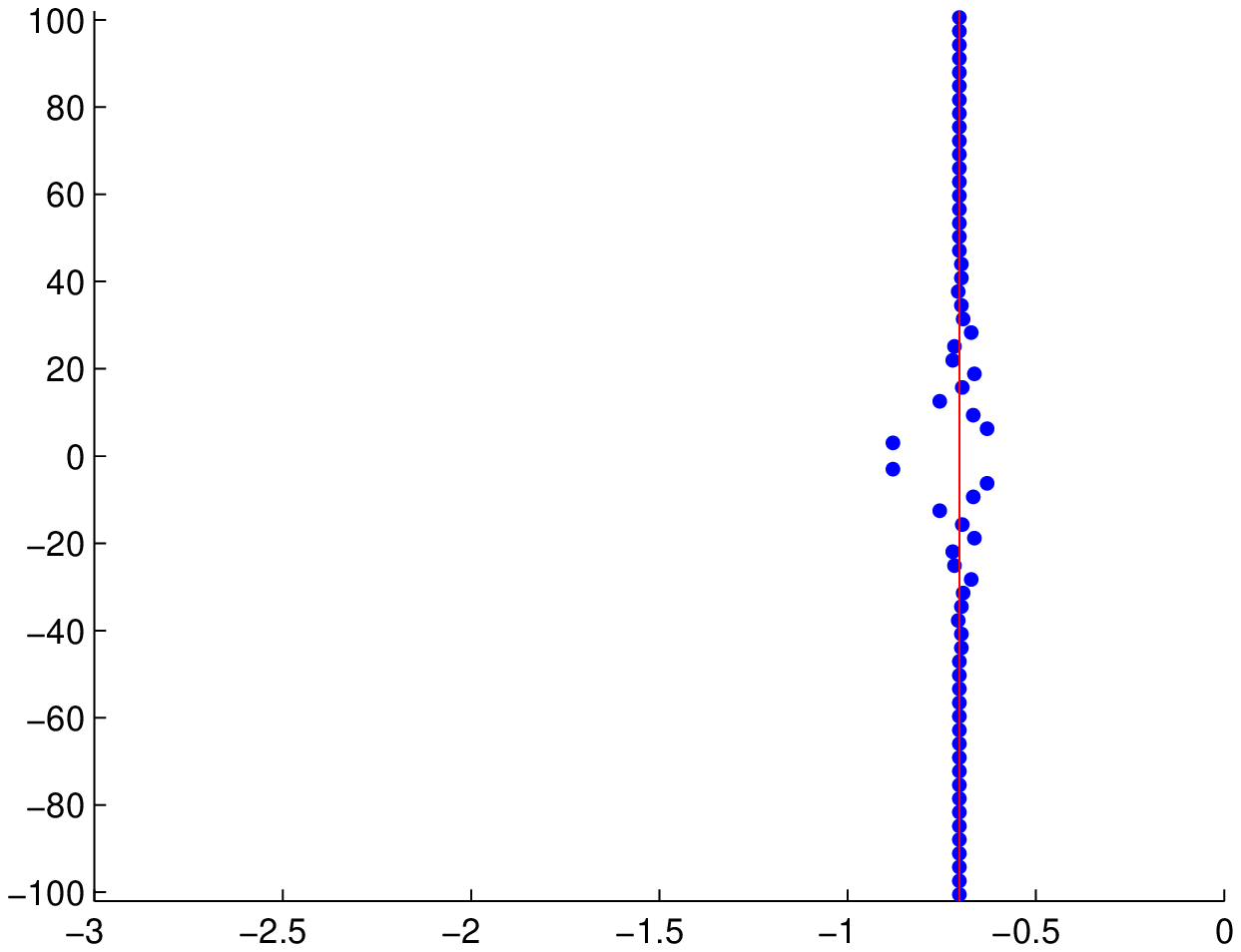}
      \caption{ distribution of the eigenvalues}
    \end{subfigure}
     \begin{subfigure}{0.32\textwidth}
      \includegraphics[width=\textwidth]{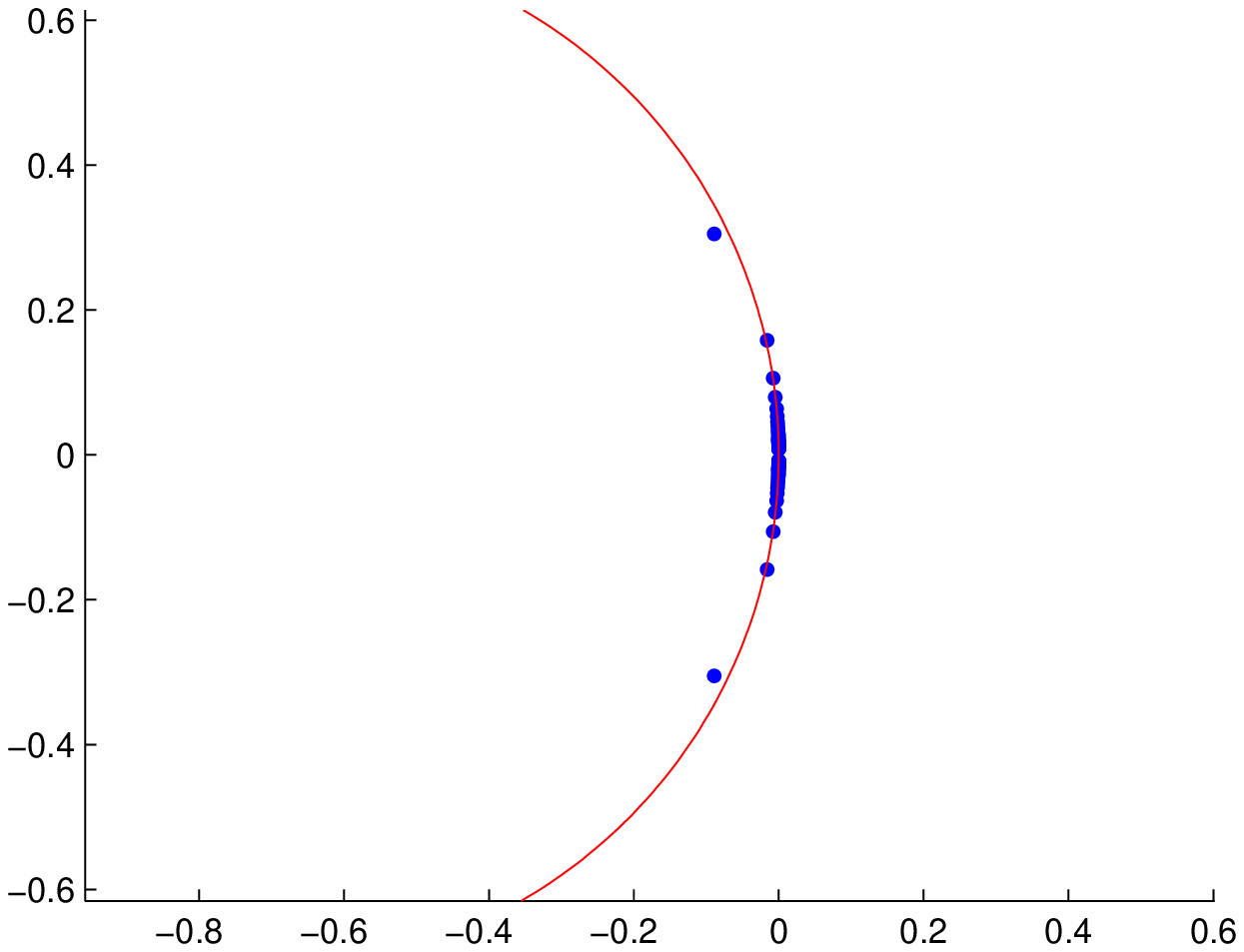}
      \caption{  the reciprocal of the eigenvalues}
    \end{subfigure}
    \caption{Distribution of eigenvalues}\label{eigen_distribution}
  \end{figure}

Recall that the conformal mapping $\frac{1}{z}$ on the complex plane $\mathbb{C}$ maps the line $\{\Re z=-\frac{\alpha_0}{2}\}$ to the circle $B_{(-\frac{1}{\alpha_0},0)}(\frac{1}{\alpha_0})$, then $\{\lambda_j(A(\alpha))^{-1}\}_{j\in J}$ scatter near that circle if  the damping $\alpha$ is not large, see Figure \ref{eigen_distribution}(c). We need the polynomials to be well-behaved on the circle, and create enough oscillations near $z=0$ to discriminate the measured eigenvalues.
We use the polynomials
\begin{equation*}
T_n(z)=z\left(\alpha_0 z+1\right)^{n-1},
\end{equation*}
where $\alpha_0$ is approximated using the asymptotics \eqref{eigenvalue_asymptotic}.
Moreover, denote
\begin{equation*}
\widetilde{T}_n(z)=z^2\left(\alpha_0 z+1\right)^{n-2}=z T_{n-1}(z)=\frac{1}{\alpha_0}T_n(z)-\frac{1}{\alpha_0}T_{n-1}(z).
\end{equation*}
We use the following recursive relation for the polynomials of $T_n$.
\begin{equation}\label{recursive_Tn}
\begin{split}
T_{n+1}(z)=&(\alpha_0 z+1)T_n(z)\\
=&(\alpha_0^2 z^2+2\alpha_0 z+1)T_{n-1}(z)\\
=&\alpha_0^2 z^2T_{n-1}(z)+2\alpha_0 z^2(\alpha_0z+1)^{n-2}+T_{n-1}(z)\\
=&\alpha_0^2 z^2T_{n-1}(z)+2\alpha_0\widetilde{T}_n(z)+T_{n-1}(z)\\
=&\alpha_0^2 z^2T_{n-1}(z)+2(T_n(z)-T_{n-1}(z))+T_{n-1}(z)\\
=& 2T_n(z)-T_{n-1}(z)+\alpha_0^2z^2T_{n-1}(z).
\end{split}
\end{equation}
We note $z^2T_{n-1}(z)=z\widetilde{T}_n(z)$ for later use.

\begin{remark}
This choice of polynomials does not work well for large dampings, for which the eigenvalues $\lambda_j^{-1}$ for $j$ small might be far away from the circle $B_{(-\frac{1}{\alpha_0},0)}(\frac{1}{\alpha_0})$. See Figure \ref{eigen_distribution_extreme} for the distribution of the eigenvalues for an example of Freitas \cite{freitas1999optimizing},
\begin{equation}\label{example_larged}
\alpha(x)=\frac{3.1133\pi}{2}+1.4896\pi\cos2\pi x.
\end{equation}
Notice that we actually have $2b = \sup_{x\in[0,1]}\alpha(x)> 2\pi=2\sqrt{\mu_1(T^*T)}$.
However, it still can be used for low frequency approximation, which will be demonstrated by Example \ref{example5} in the next section. Also, a more complicated strategy for choosing polynomials might enable one to go to higher frequencies. 
\end{remark}

\begin{figure}[t!]
    \begin{subfigure}{0.32\textwidth}
      \includegraphics[width=\textwidth]{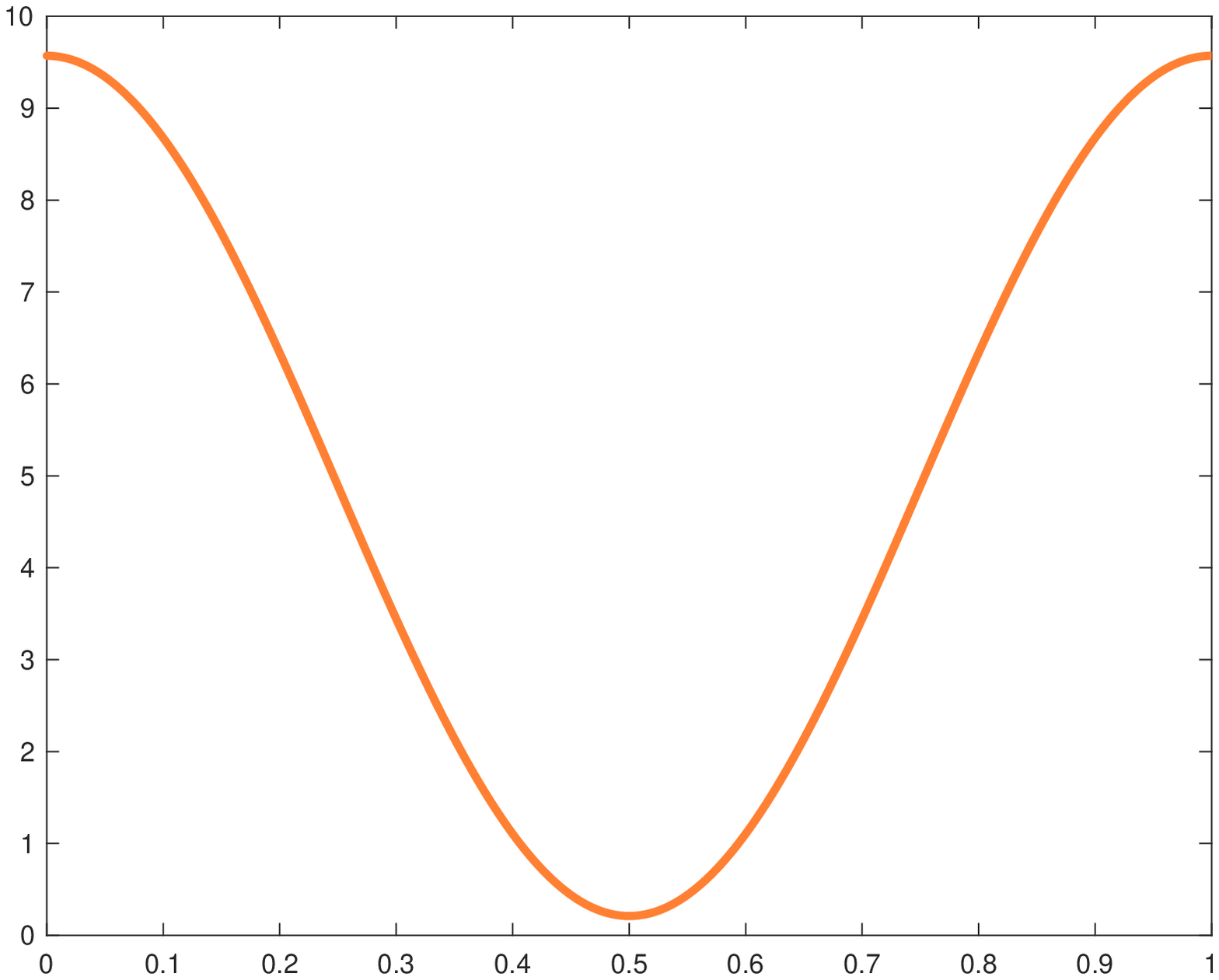}
      \caption{ the damping coefficient}
    \end{subfigure}
    \begin{subfigure}{0.32\textwidth}
      \includegraphics[width=\textwidth]{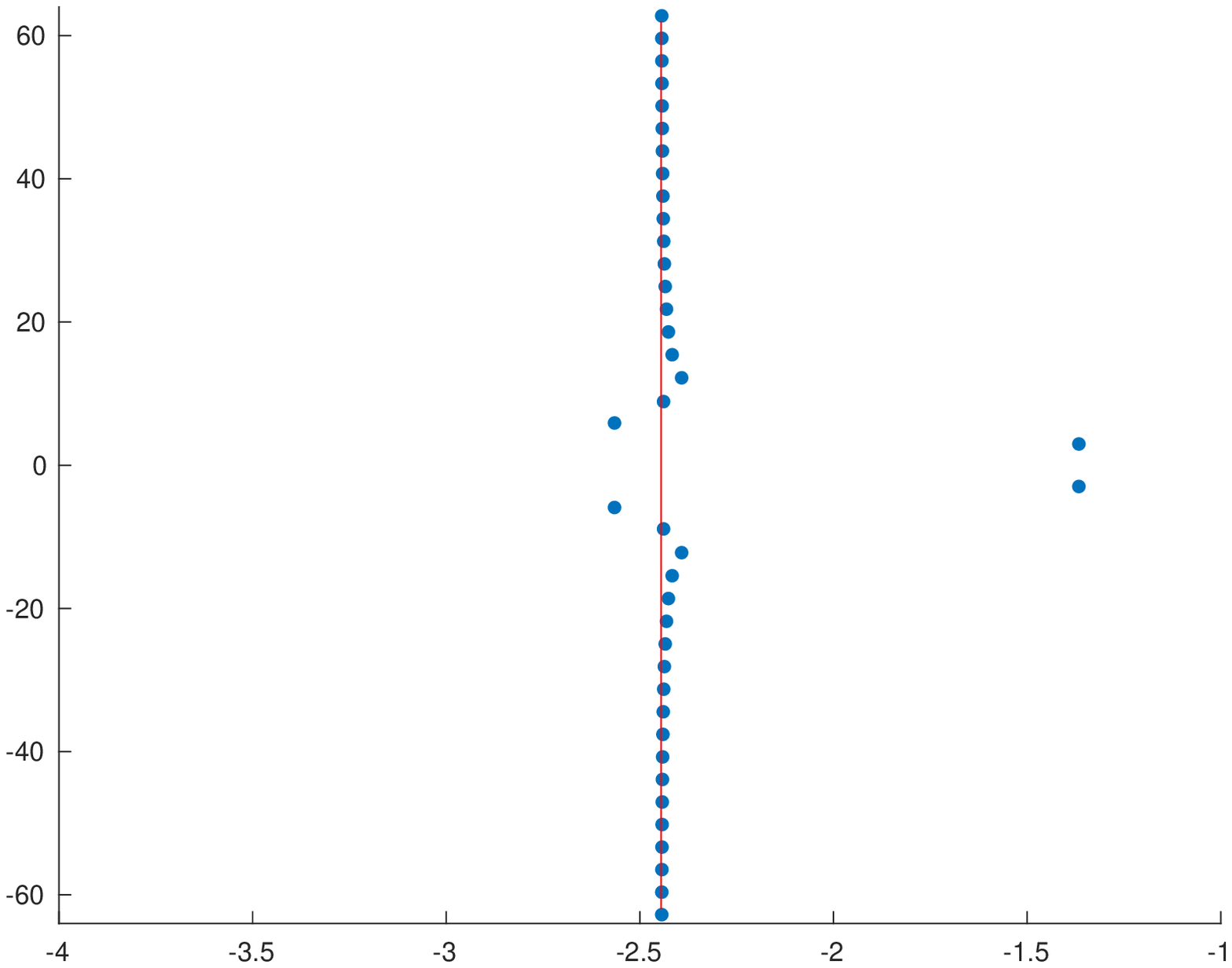}
      \caption{ distribution of the eigenvalues}
    \end{subfigure}
     \begin{subfigure}{0.32\textwidth}
      \includegraphics[width=\textwidth]{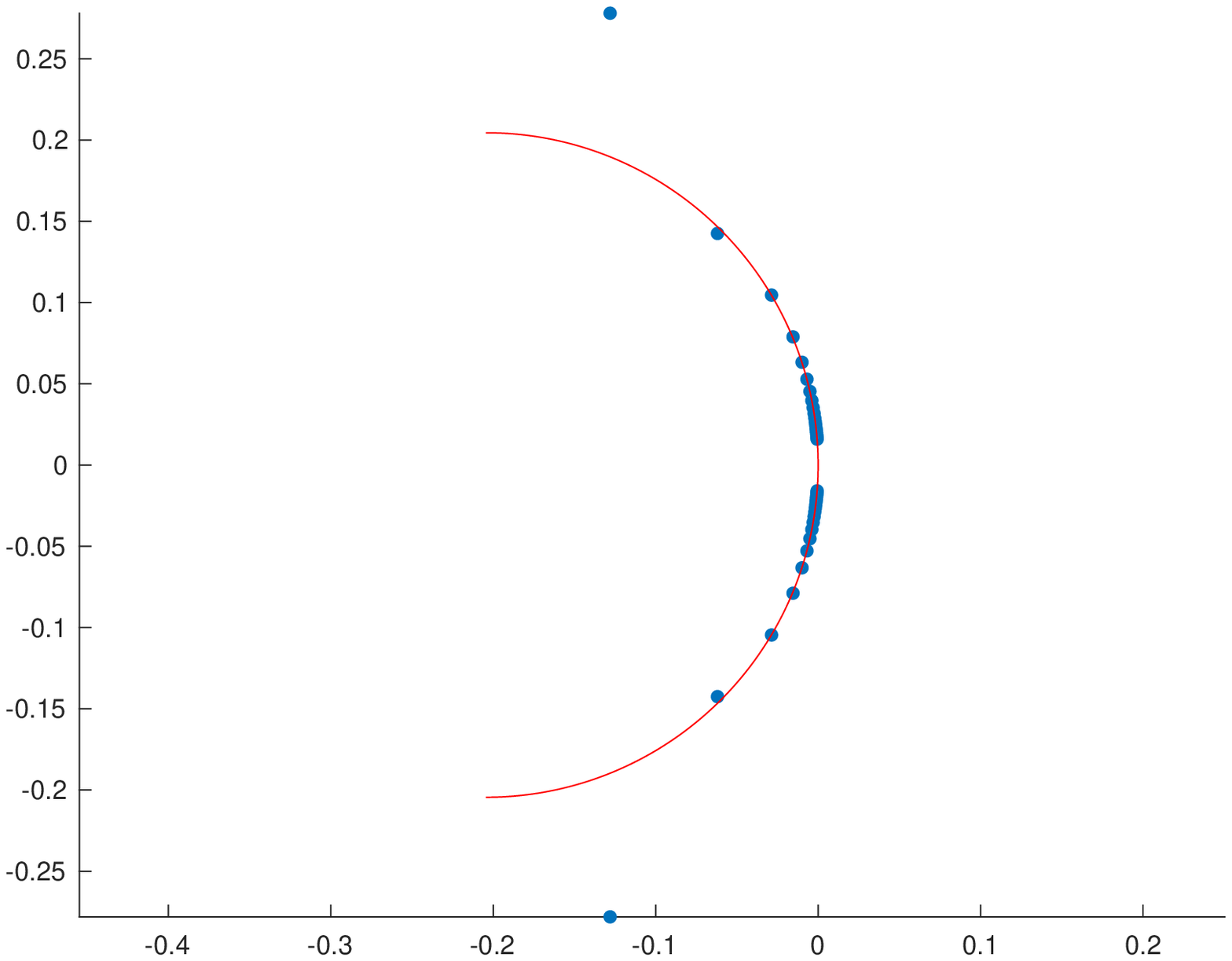}
      \caption{  the reciprocal of the eigenvalues}
    \end{subfigure}

    \caption{Distribution of eigenvalues for large damping}\label{eigen_distribution_extreme}
  \end{figure}

We use truncated Fourier cosine series to approximate an even damping coefficient,
 \begin{equation}\label{cosine_expansion1}
\alpha_M(x)=\sum_{m=1}^M a_m\cos 2(m-1)\pi x,
\end{equation}
and denote $\mathbf{a}=\{a_1,a_2,\cdots,a_M\}$.
With a little abuse of notations, we use $\mathbf{M}_n(\mathbf{a})$ in place of  $\mathbf{M}_n(\alpha)$ in the following. Then
\begin{equation*}
\mathbf{M}_1(\mathbf{a})=\sum_{m=1}^M a_m \mathbf{M}_1(\mathbf{e}_m),
\end{equation*}
where
\begin{equation*}
\begin{split}
(\mathbf{M}_1(\mathbf{e}_m))_{ij}=&-\frac{2}{\pi^2j^2}\int_0^1\sin i\pi x\,\sin j\pi x\cos2(m-1)\pi x\mathrm{d}x\\
=&\begin{cases}
\frac{1}{2 \pi^2j^2},\quad & i+j+2m-2=0,\\
\frac{1}{2 \pi^2j^2},\quad & i+j-2m+2=0,\\
-\frac{1}{2 \pi^2j^2},\quad & i-j+2m-2=0,\,m\neq 1,\\
-\frac{1}{2 \pi^2j^2},\quad & i-j-2m+2=0,\,m\neq 1,\\
-\frac{1}{\pi^2j^2},\quad & i=j,m=1,\\
0,\quad &\text{otherwise},
\end{cases}
\end{split}
\end{equation*}
and $\mathbf{e}_m=\{a_1=0,\cdots,a_{m-1}=0,a_m=1,a_{m+1}=0,\cdots,a_M=0\}$.
\begin{remark}
The matrix $\mathbf{M}_1(\mathbf{e}_m)$ here is not a symmetric matrix, in contrast to the one defined in \cite{XZ1}.
\end{remark}

Next we define
\begin{equation*}
\mathbf{T}_1(\mathbf{a})=\mathbf{M}_1(\mathbf{a}),
\end{equation*}
\begin{equation*}
\mathbf{T}_2(\mathbf{a})=\alpha_0\mathbf{M}_2(\mathbf{a})+\mathbf{M}_1(\mathbf{a})=\alpha_0(2\mathbf{M}_1(\mathbf{e}_1)+\mathbf{M}_1(\mathbf{a})^2)+\mathbf{M}_1(\mathbf{a}),
\end{equation*}
and
\begin{equation*}
\widetilde{\mathbf{T}}_n(\mathbf{a})=\frac{1}{\alpha_0}(\mathbf{T}_n(\mathbf{a})-\mathbf{T}_{n-1}(\mathbf{a})),
\end{equation*}
\begin{equation*}
\begin{split}
\mathbf{T}_{n+1}(\mathbf{a})=&2\mathbf{T}_n(\mathbf{a})-\mathbf{T}_{n-1}(\mathbf{a})+\alpha_0^2 \left(\mathbf{T}_{n-1}(\mathbf{a})\mathbf{M}_1(\mathbf{e}_1)+\widetilde{\mathbf{T}}_n(\mathbf{a})\mathbf{M}_1(\mathbf{a})\right)\\
=&2\mathbf{T}_n(\mathbf{a})-\mathbf{T}_{n-1}(\mathbf{a})+\alpha_0^2 \mathbf{T}_{n-1}(\mathbf{a})\mathbf{M}_1(\mathbf{e}_1)+\alpha_0(\mathbf{T}_n(\mathbf{a})-\mathbf{T}_{n-1}(\mathbf{a}))\mathbf{M}_1(\mathbf{a}),
\end{split}
\end{equation*}
for $n=2,3,\cdots$, in parallel with \eqref{recursive_Tn}.
One can check that if $T(z)=\sum_{i=1}^n b_iz^i$, then $\mathbf{T}_{n}(\mathbf{a})=\sum_{i=1}^nb_i\mathbf{M}_i(\mathbf{a})$. Therefore, we obtain
\begin{proposition}
For $n=1,2,\cdots$,
\begin{equation*}
\mathrm{trace}(\mathbf{T}_{n}(\mathbf{a}))=\sum_{j\in J}T_{n}(\lambda_j^{-1}).
\end{equation*}
\end{proposition}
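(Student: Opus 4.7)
The plan is to prove by induction on $n$ the stronger operator identity
\begin{equation*}
\mathbf{T}_n(\mathbf{a}) = \sum_{i=1}^n b_{n,i}\,\mathbf{M}_i(\mathbf{a}),
\qquad\text{where } T_n(z) = \sum_{i=1}^n b_{n,i}z^i.
\end{equation*}
Once this is established, taking the trace and invoking the linearity of trace together with the previously established identity $\mathrm{trace}(\mathbf{M}_i(\mathbf{a})) = \sum_{j\in J}\lambda_j^{-i}$ immediately yields the claim: $\mathrm{trace}(\mathbf{T}_n(\mathbf{a})) = \sum_i b_{n,i}\sum_j \lambda_j^{-i} = \sum_j T_n(\lambda_j^{-1})$.

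The base cases $n=1,2$ follow directly from the explicit definitions of $\mathbf{T}_1(\mathbf{a})$ and $\mathbf{T}_2(\mathbf{a})$ and the expansions $T_1(z)=z$, $T_2(z) = \alpha_0 z^2 + z$. For the inductive step, I would use three ingredients: (i) the polynomial recursion $T_{n+1}(z) = 2T_n(z) - T_{n-1}(z) + \alpha_0^2 z^2 T_{n-1}(z)$ already derived in \eqref{recursive_Tn}; (ii) the matrix recursion $\mathbf{M}_{i+2}(\mathbf{a}) = \mathbf{M}_{i+1}(\mathbf{a})\mathbf{M}_1(\mathbf{a}) + \mathbf{M}_i(\mathbf{a})\mathbf{M}_1(\mathbf{e}_1)$ (since $\mathbf{M}_1(\mathbf{e}_1)=\mathbf{M}_1(1)$); and (iii) the key bridge $\widetilde{\mathbf{T}}_n(\mathbf{a}) = \frac{1}{\alpha_0}(\mathbf{T}_n(\mathbf{a}) - \mathbf{T}_{n-1}(\mathbf{a}))$, which, combined with the induction hypothesis and the polynomial identity $T_n(z) - T_{n-1}(z) = \alpha_0 z T_{n-1}(z)$, forces $\widetilde{\mathbf{T}}_n(\mathbf{a}) = \sum_i b_{n-1,i}\mathbf{M}_{i+1}(\mathbf{a})$, i.e., the matrix analog of $z T_{n-1}(z)$.

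With this bridge in hand, applying the $\mathbf{M}$-recursion term by term gives
\begin{equation*}
\sum_i b_{n-1,i}\mathbf{M}_{i+2}(\mathbf{a}) \;=\; \widetilde{\mathbf{T}}_n(\mathbf{a})\mathbf{M}_1(\mathbf{a}) + \mathbf{T}_{n-1}(\mathbf{a})\mathbf{M}_1(\mathbf{e}_1),
\end{equation*}
which is the matrix counterpart of $z^2 T_{n-1}(z)$. Multiplying by $\alpha_0^2$ and using the bridge to rewrite $\alpha_0^2\widetilde{\mathbf{T}}_n(\mathbf{a})\mathbf{M}_1(\mathbf{a}) = \alpha_0(\mathbf{T}_n(\mathbf{a})-\mathbf{T}_{n-1}(\mathbf{a}))\mathbf{M}_1(\mathbf{a})$ reproduces exactly the last two terms of the defining recursion for $\mathbf{T}_{n+1}(\mathbf{a})$. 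Combining with $2\mathbf{T}_n(\mathbf{a}) - \mathbf{T}_{n-1}(\mathbf{a}) = \sum_i(2b_{n,i} - b_{n-1,i})\mathbf{M}_i(\mathbf{a})$ (by induction) and the polynomial recursion, the coefficients match term by term, closing the induction.

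There is no genuine obstacle here; the argument is essentially a careful bookkeeping that the two recursions (one for polynomials in $z$, one for matrix words in $\mathbf{M}_1(\mathbf{a})$ and $\mathbf{M}_1(\mathbf{e}_1)$) are compatible under the substitution $z^i \mapsto \mathbf{M}_i(\mathbf{a})$. The only subtlety worth being explicit about is verifying the bridge identity $\widetilde{\mathbf{T}}_n(\mathbf{a}) = \sum_i b_{n-1,i}\mathbf{M}_{i+1}(\mathbf{a})$, which hinges on the algebraic identity $T_n(z) - T_{n-1}(z) = \alpha_0 z T_{n-1}(z)$ coming from the product structure $T_n(z) = z(\alpha_0 z + 1)^{n-1}$; this is what makes the chosen polynomial family well-suited to the matrix recursion in the first place.
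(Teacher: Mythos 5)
Your proposal is correct and follows essentially the same route as the paper: the paper's entire justification is the (unverified) assertion that $\mathbf{T}_{n}(\mathbf{a})=\sum_{i=1}^n b_i\mathbf{M}_i(\mathbf{a})$ whenever $T_n(z)=\sum_{i=1}^n b_i z^i$, combined with linearity of the trace and the earlier identity $\mathrm{trace}(\mathbf{M}_i(\mathbf{a}))=\sum_{j\in J}\lambda_j^{-i}$. Your induction --- in particular the bridge identity for $\widetilde{\mathbf{T}}_n(\mathbf{a})$ obtained from $T_n(z)-T_{n-1}(z)=\alpha_0 z T_{n-1}(z)$, which is needed precisely because $z^i\mapsto\mathbf{M}_i(\mathbf{a})$ is not multiplicative --- is exactly the verification the paper leaves to the reader.
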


In light of the above proposition, we invert the map
\begin{equation*}
\mathbf{a}\rightarrow\{\mathrm{trace}(\mathbf{T}_{n}(\mathbf{a}))\}_{n=1}^N.
\end{equation*}

Applying the chain rule and the recursive formula for $\mathbf{T}_n(\mathbf{a})$, we have the following recursive formula for the Fr{\'e}chet derivatives
\begin{equation*}
\frac{\partial\mathbf{T}_1(\mathbf{a})}{\partial a_m}=\mathbf{M}_1(\mathbf{e}_m),
\end{equation*}
\begin{equation*}
\frac{\partial\mathbf{T}_2(\mathbf{a})}{\partial a_m}=\mathbf{M}_1(\mathbf{e}_m)+\alpha_0\mathbf{M}_1(\mathbf{a})\mathbf{M}_1(\mathbf{e}_m)+\alpha_0\mathbf{M}_1(\mathbf{e}_m)\mathbf{M}_1(\mathbf{a}),
\end{equation*}
and
\begin{equation*}
\begin{split}
\frac{\partial \mathbf{T}_{n+1}(\mathbf{a})}{\partial a_m}=&2\frac{\partial \mathbf{T}_{n}(\mathbf{a})}{\partial a_m}-\frac{\partial \mathbf{T}_{n-1}(\mathbf{a})}{\partial a_m}+\alpha_0^2\frac{\partial \mathbf{T}_{n-1}(\mathbf{a})}{\partial a_m}\mathbf{M}_1(\mathbf{e}_1)\\
&+\alpha_0\left(\frac{\partial \mathbf{T}_{n}(\mathbf{a})}{\partial a_m}-\frac{\partial \mathbf{T}_{n-1}(\mathbf{a})}{\partial a_m}\right)\mathbf{M}_1(\mathbf{a})+\alpha_0(\mathbf{T}_n(\mathbf{a})-\mathbf{T}_{n-1}(\mathbf{a}))\frac{\partial\mathbf{M}_1(\mathbf{a})}{\partial a_m}.
\end{split}
\end{equation*}
Now we can summarize the algorithm in Algorithm \ref{algorithm}.
\begin{algorithm}
\caption{Inversion of trace formulas for the damped wave operator}\label{algorithm}
\begin{algorithmic}[1]
\STATE precompute $\mathbf{M}(\mathbf{e}_m)$, traces $r_1^{true}=\sum_{k=1}^\infty T_1(\lambda_k^{-1}),\ldots, r_N^{true}=\sum_{k=1}^\infty T_N(\lambda_k^{-1})$
\STATE get an approximate value of $\alpha_0$ from measured eigenvalues
\STATE given initial guess $\mathbf{a}_0$
\FOR{$1\leq n\leq$ max number of iterations}
\STATE form $\mathbf{T}_1(\mathbf{a}_{n-1})$, $\mathbf{T}_2(\mathbf{a}_{n-1})$, $\frac{\partial\mathbf{T}_1(\mathbf{a}_{n-1})}{\partial a_m}$, $\frac{\partial\mathbf{T}_2(\mathbf{a}_{n-1})}{\partial a_m}$
\STATE $r_{1}=\textbf{trace}(\mathbf{T}_1(\mathbf{a}_{n-1}))$
\STATE $r_{2}=\textbf{trace}(\mathbf{T}_2(\mathbf{a}_{n-1}))$
\FOR{$1\leq m\leq M$}
\STATE $J_{1,m}=\textbf{trace}\left(\frac{\partial\mathbf{T}_1(\mathbf{a}_{n-1})}{\partial a_m}\right)$
\STATE $J_{2,m}=\textbf{trace}\left(\frac{\partial\mathbf{T}_2(\mathbf{a}_{n-1})}{\partial a_m}\right)$
\ENDFOR
\FOR{$2\leq j\leq N-1$}
\STATE $\mathbf{T}_{j+1}(\mathbf{a}_{n-1})=2\mathbf{T}_j(\mathbf{a}_{n-1})-\mathbf{T}_{j-1}(\mathbf{a})+\alpha_0^2 \mathbf{T}_{j-1}(\mathbf{a}_{n-1})\mathbf{M}_1(\mathbf{e}_1)+\alpha_0(\mathbf{T}_j(\mathbf{a}_{n-1})-\mathbf{T}_{j-1}(\mathbf{a}_{n-1}))\mathbf{M}_1(\mathbf{a}_{n-1})$
\FOR{$1\leq m\leq M$}
\STATE $\frac{\partial \mathbf{T}_{j+1}(\mathbf{a}_{n-1})}{\partial a_m}=2\frac{\partial \mathbf{T}_{j}(\mathbf{a}_{n-1})}{\partial a_m}-\frac{\partial \mathbf{T}_{j-1}(\mathbf{a}_{n-1})}{\partial a_m}+\alpha_0(\mathbf{T}_j(\mathbf{a}_{n-1})-\mathbf{T}_{j-1}(\mathbf{a}_{n-1}))\frac{\partial\mathbf{M}_1(\mathbf{a}_{n-1})}{\partial a_m}$\\ $
\quad\quad\quad\quad\quad\quad+\alpha_0\left(\frac{\partial \mathbf{T}_{j}(\mathbf{a}_{n-1})}{\partial a_m}-\frac{\partial \mathbf{T}_{j-1}(\mathbf{a}_{n-1})}{\partial a_m}\right)\mathbf{M}_1(\mathbf{a}_{n-1})+\alpha_0^2\frac{\partial \mathbf{T}_{j-1}(\mathbf{a}_{n-1})}{\partial a_m}\mathbf{M}_1(\mathbf{e}_1)$
\ENDFOR
\STATE $r_{j+1}=\textbf{trace}(\mathbf{T}_{j+1}(\mathbf{a}_{n-1}))$
\FOR{$1\leq m\leq M$}
\STATE $J_{j+1,m}=\textbf{trace}\left(\frac{\partial \mathbf{T}_{j+1}(\mathbf{a}_{n-1})}{\partial a_m}\right)$
\ENDFOR
 \ENDFOR
 \STATE compute $\delta \mathbf{a}$ using Jacobi $\mathbf{J}=(J_{j,m})_{N\times M}$ and residual $\mathbf{r}^{true}-\mathbf{r}=(r_j^{true}-r_j)_{N\times 1}$
 \STATE $\mathbf{a}_n=\mathbf{a}_{n-1}+\delta\mathbf{a}$
 \ENDFOR
\end{algorithmic}
\end{algorithm}
\begin{remark}
Note that the trace formulas involve infinite sums. But realistically we can only have a finite number of measured eigenvalues. Assume we have $2K$ measured eigenvalues, say $\{\lambda_j\}_{j=-K}^K$, we can approximate the infinite sum
\begin{equation*}
\sum_{j\in J}T_{n}(\lambda_j^{-1})=\sum_{j=-K}^KT_{n}(\lambda_j^{-1})+\sum_{|j|\geq K+1}T_{n}(\lambda_j^{-1}),
\end{equation*}
by
\begin{equation*}
\sum_{j\in J}T_{n}(\lambda_j^{-1})\approx\sum_{j=-K}^KT_{n}(\lambda_j^{-1})+\sum_{K+1\leq |j|\leq K_1}T_{n}((-\frac{\alpha_0}{2}+j \pi\i)^{-1}),
\end{equation*}
noticing $\lambda_j\approx-\frac{\alpha_0}{2}+j \pi\i$ (cf. \eqref{eigenvalue_asymptotic}).
\end{remark}

\section{Numerical experiments}\label{sec5}

In this section we conduct some numerical experiments to illustrate the efficiency of Algorithm \ref{algorithm}. We design five examples to show reconstructions for smooth or non-smooth damping coefficients with accurate or inaccurate data. To generate synthetic data, we use Chebyshev pseudo-spectral collocation method to discretize the Laplacian operator $\Delta=\frac{\mathrm{d}^2}{\mathrm{d}x^2}$, using Trefethen's $\mathtt{cheb.m}$ routine \cite{trefethen2000spectral}.
We use $400$ Chebyshev points to discretize the Laplacian.
For all computations, Gauss-Newton is used as the optimization algorithm with tolerance set to $10^{-5}\times N$.

The parameters in the algorithm are listed in Table \ref{Symbol}. We discuss the impacts of different choices of these parameters on the performance of the algorithm.

\begin{table}[h]
\centering
  \begin{tabular}{|c | c |}
  \hline
 notation & parameter\\
  \hline
$K$ & $2K$: number of ``true" eigenvalues measured \\
    \hline
 $M$&number of basis functions\\
  \hline
  $J$ & $J\times J$: size of the truncated matrix $\mathbf{M}$\\
  \hline
    $N$ & highest degree of the polynomials \\
  \hline
  $K_1$ &$2K_1$: total number of eigenvalues utilized in traces\\
        & i.e., $2(K_1-K)$ ``approximated" eigenvalues\\
  \hline
  \end{tabular}
  \caption{Parameters for the algorithm}\label{Symbol}
\end{table}
 It is learned from \cite{Cox2011reconstructing} that the $m$-th eigenvalue may encode the $m$-th Fourier modes information of $\alpha$. Hence, we usually take $M= K$ for numerical reconstructions.

\begin{example} \label{example1}
Set the damping coefficient as follows:
$$\alpha(x) = - \exp(-(x-\frac{1}{2})^2)+ 8(x-\frac{1}{2})^4 + 6(x-\frac{1}{2})^2  + 1.25.$$
In Table \ref{Table1}, we list the first $K=4$ eigenvalues with positive imaginary parts for the true damping $\alpha$, the reconstructed one $\alpha_M$ and the Fourier approximation $\alpha_F$.
We see that when the number $K=M$ increases from 4 to 8 simultaneously, the accuracy of reconstruction will be improved.
\begin{table}[h]
{\small
\caption{True eigenvalues \textit{vs} eigenvalues for reconstructed $a_M(x)$ and the Fourier approximation}\label{Table1}
\begin{tabular}{|c|c|c|c|c|}
  \hline
       & $\lambda_1$ & $\lambda_2$ & $\lambda_3$ & $\lambda_4$ \\
    \hline
    true $\lambda_j$  & -0.2493 + 3.1335i & -0.3996 + 6.2742i &  -0.4343 + 9.4142i & -0.4469 +12.5566i \\
    \hline
       $\tilde{\lambda}_j, K=M=4 $ & -0.2493 + 3.1335i  & -0.3997 + 6.2744i & -0.4380 + 9.4141i & -0.4483 +12.5560i \\
   $|\lambda_j-\tilde{\lambda}_j|$   & 0.0000 & 0.0002 & 0.0036 & 0.0015 \\
   \hline
    $\tilde{\lambda_j}, K=M=8$ &  -0.2493 + 3.1335i &  -0.3996 + 6.2742i & -0.4342 + 9.4143i &  -0.4487 +12.5563i \\
    $|\lambda_j-\tilde{\lambda}_j|$ & 0.0000  & 0.0000 & 0.0001 &  0.0018 \\
   \hline
     $\tilde{\lambda}_j(A(\alpha_F))$  & -0.2493 + 3.1335i & -0.3996 + 6.2742i & -0.4343 + 9.4142i & -0.4469 +12.5566i \\
    $|\lambda_j-\tilde{\lambda}_j|$  & 0.0000 & 0.0000 & 0.0000 & 0.0000 \\
  \hline
\end{tabular}
}
\end{table}

However, there exists a balance between different parameters. When we fix $K$ and $K_1$ and then increase $M$, it does not always give a better result, see Table \ref{Table2} where the error is defined in $L^2$-norm, i.e., $\int_{0}^{1}|\alpha(x)-\alpha_M(x)|^2dx$. For instance, from Table \ref{Table2}, we can find that when $K_1$ and $K$ are fixed and $M$ is increasing, the error decreases at the beginning and then increases. It indicates that $M$ does play the role as a regularization parameter and depend on the accuracy of trace formulas, which is in fact determined by the number of known eigenvalues $K_1$ and $J$. In the following numerical simulations, we take a reasonable choice of $K_1 =J$ to avoid rounding error which may affect the accuracy of approximation of trace formulas.
\begin{table}[H]
	\begin{center}
		\caption{Inversion errors of damping coefficients in $L^2$ norm.}\label{Table2}
		\begin{tabular}{ccccccc}
			\hline
  $K=8$    &         M=3        & M=4    & M=5 & M=6   & M=7 & M =8 \\
 $K_1=J=25$, $N=25$&       $0.0051$   & 0.0144 &  0.0216   & $ 0.0242$  & 0.0247&0.0248 \\
			$K_1=J=50$, $N=50$&   $ 0.0071$  & 0.0052 &   0.0194   & $ 0.0206$ &  0.0209 &0.0209\\
			 $K_1=J=100$, $N=100$&  $0.0080 $  & 0.0032 & 0.0061    & $0.0090$ & 0.0209 & 0.0294\\
			 $K_1 =J=150$, $N=150$& $ 0.0081$  & 0.0052 & 0.0025    & $ 0.0023$   &  0.0021 & 0.0114 \\
\hline
		\end{tabular}
	\end{center}
	\label{tab1}
\end{table}

When $M=K$ and $N$ are fixed, it is shown from Figure \ref{Fig_ex1_1}(a-b) that $K_1$ and $J$ actually do not affect the final reconstruction too much. The curves in Figure \ref{Fig_ex1_1} are almost flat for different cases. However, the gaps between different cases are large, which indicates that the number of measured spectral data $K$ is of more importance than other parameters in reconstruction. Moreover, when $N$ is small, the error may increase with larger $M=K$, see Figure \ref{Fig_ex1_1}. The reason lies in the fact that small $N$ does not discriminate enough eigenvalues in reconstruction. When $N$ is large in Figure \ref{Fig_ex1_1}(b), it is clear that the error decreases with $M$.



\begin{figure}[t!]
  \begin{center}
  \begin{subfigure}{0.42\textwidth}
      \includegraphics[width=\textwidth]{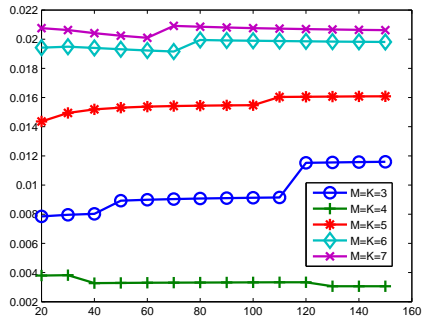}
      \caption{ $N=50$, Error with respect to $J=K_1$ }
    \end{subfigure}
    \begin{subfigure}{0.42\textwidth}
      \includegraphics[width=\textwidth]{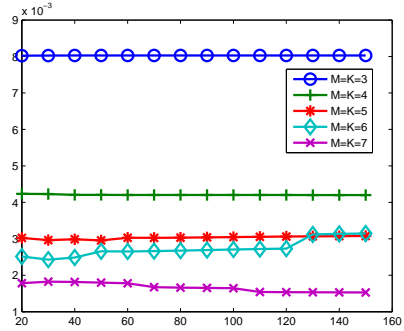}
      \caption{ $N=200$, Error with respect to $J$ }
      \end{subfigure}
    \caption{Impact of $K_1$, $J$ for fixed $M,K$ and $N$ }\label{Fig_ex1_1}
    \end{center}
  \end{figure}


Figure \ref{Fig_ex1} actually shows part of numerical inversion results for $K=8$, $K_1=J=N=150$, where the dashed lines represents the initial guess of $\alpha_M$, the orange solid line represents the exact $\alpha(x)$ and the blue solid line represents the reconstruction.
\begin{figure}[t!]
    \begin{subfigure}{0.32\textwidth}
      \includegraphics[width=\textwidth]{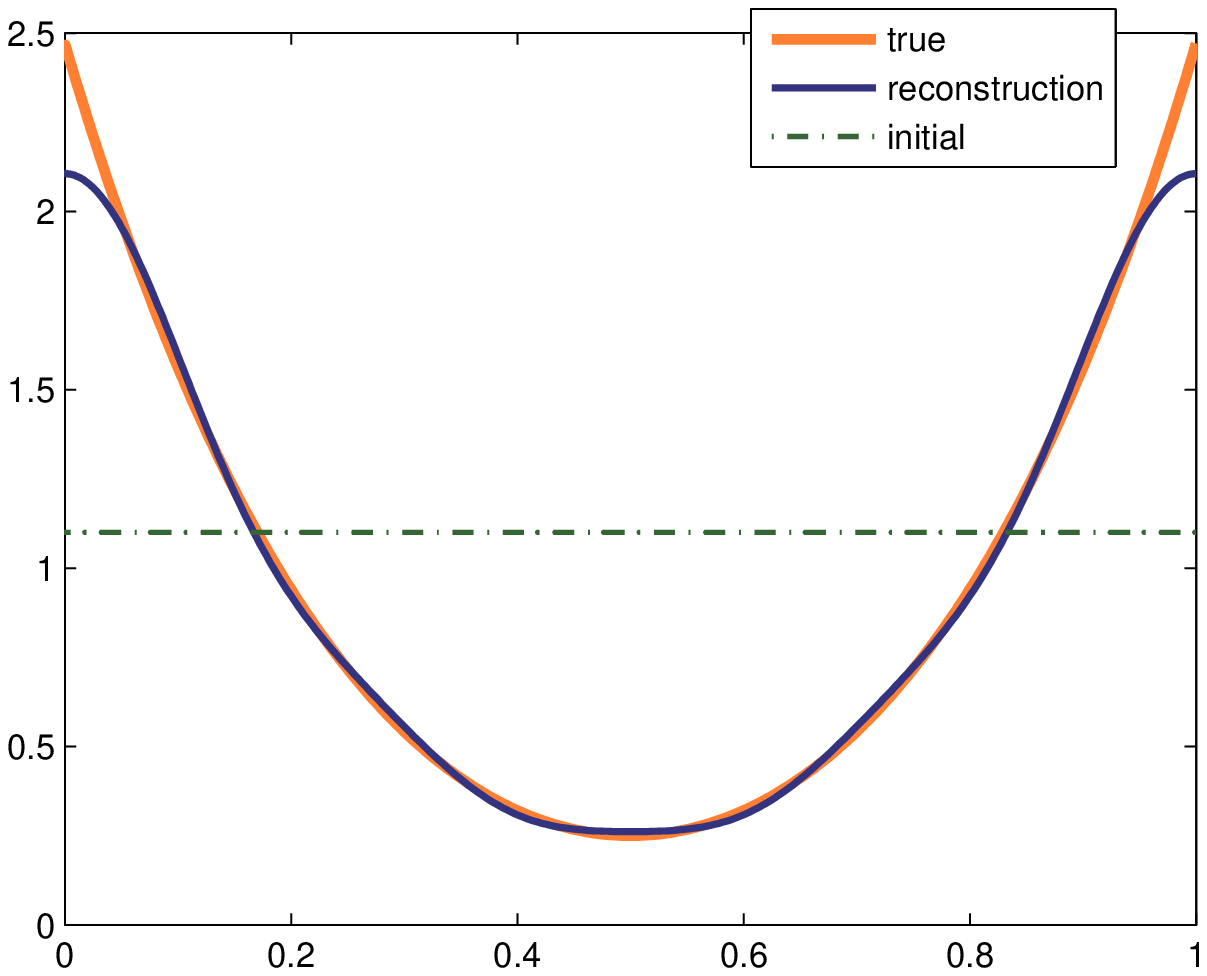}
      \caption{ M=5}
    \end{subfigure}
     \begin{subfigure}{0.32\textwidth}
      \includegraphics[width=\textwidth]{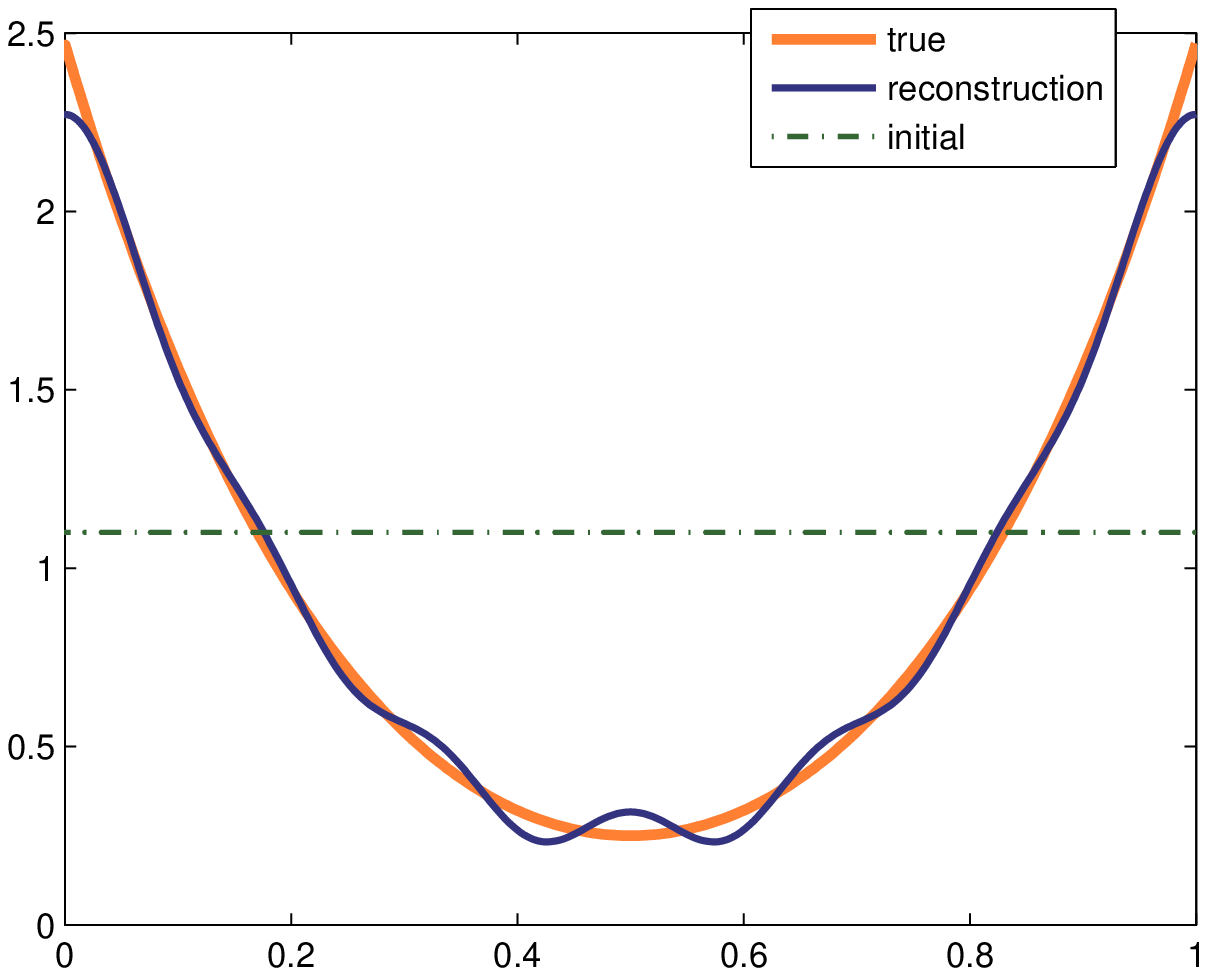}
      \caption{  M=7}
    \end{subfigure}
    \begin{subfigure}{0.32\textwidth}
      \includegraphics[width=\textwidth]{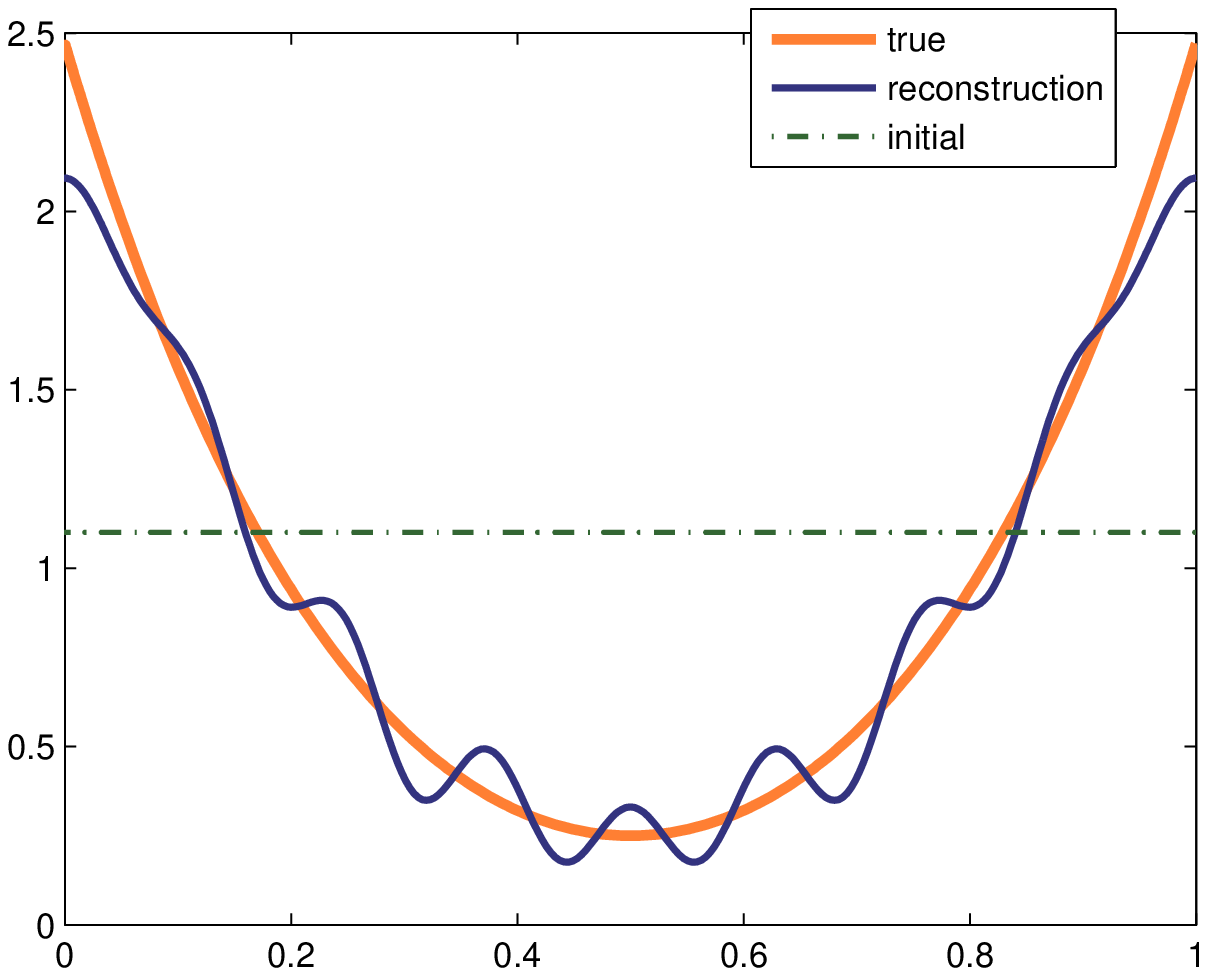}
      \caption{  M=9}
    \end{subfigure}
    \caption{Reconstruction of $\alpha_M$ in Example \ref{example1} with $K=8$, $K_1=J=N=150$. }\label{Fig_ex1}
  \end{figure}
\end{example}

\begin{example} \label{example2}
In this example, we set
\begin{align*}\alpha(x)=& 1.4062 - 0.6951\cos(2\pi x) + 0.2967\cos(4\pi x) + 0.1368\cos(6\pi x)- 0.2103\cos(8\pi x) \\
&+0.031\cos(10\pi x) + 0.153\cos(12\pi x) - 0.0718\cos(14\pi x) - 0.0512\cos(16\pi x) + 0.1258\cos(18\pi x)\\
&+ 0.04\cos(20\pi x) + 0.02\cos(22\pi x)- 0.0132\cos(24\pi x) + 0.02\cos(26\pi x) + 0.02\cos(28\pi x).
\end{align*}
Notice that this function is highly oscillatory, and thus the reconstruction needs more Fourier basis functions to see the fine structure. Therefore, in contrast to Example \ref{example1}, we need to have more eigenvalues to get an accurate reconstruction.

In the numerical experiments, we fix $K_1=J=100$ and $N=300$. To illustrate the impact of the number of ``accurate" eigenvalues $2K$ on the performance, we test three cases: $K=4$, $K=10$ and $K=50$. See Figure \ref{Fig_ex2}(a-c), (d-f) and (g-i) respectively.  One can see that for the first case $K=4$, we can only recover lower frequency information of $\alpha$. Though we can set $M>K$, i.e., Figure \ref{Fig_ex2}(a-c), the fine structure can not be recovered as not sufficient information is given. For the similar reason of $K=10$, the reconstruction for $M=12$ and $M=8$ are both worse than for $M=10$, see Figure \ref{Fig_ex2}(d-f). However, for $K=50$, the reconstruction for $M=12$ is better than for $M=10$ and $M=8$, which indicates more ``accurate" measured eigenvalues give a better reconstruction.
\begin{figure}[t!]
    \begin{subfigure}{0.32\textwidth}
       \includegraphics[width=\textwidth]{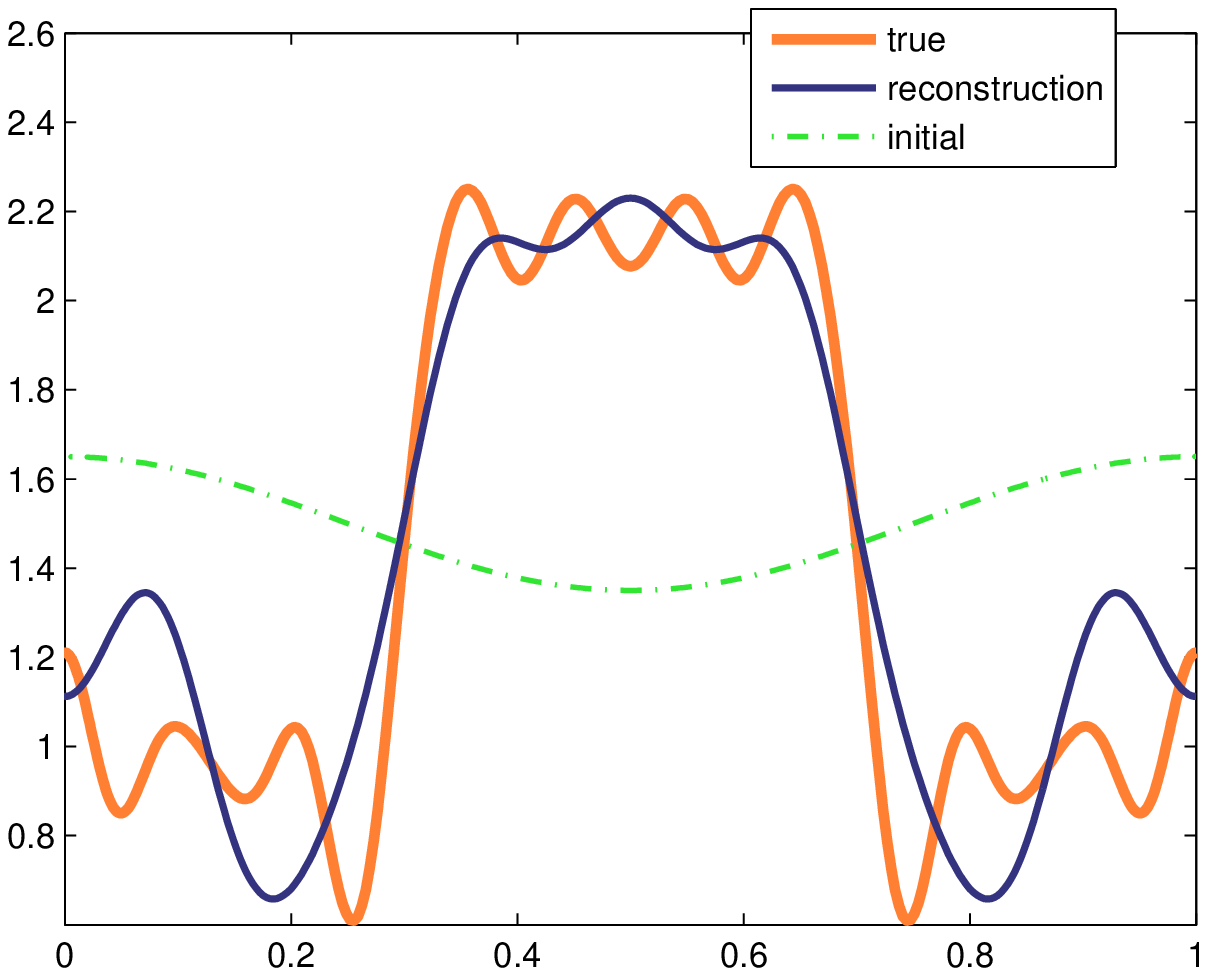}
      \caption{ $K=4$, $M=8$  }
    \end{subfigure}
    \begin{subfigure}{0.32\textwidth}
      \includegraphics[width=\textwidth]{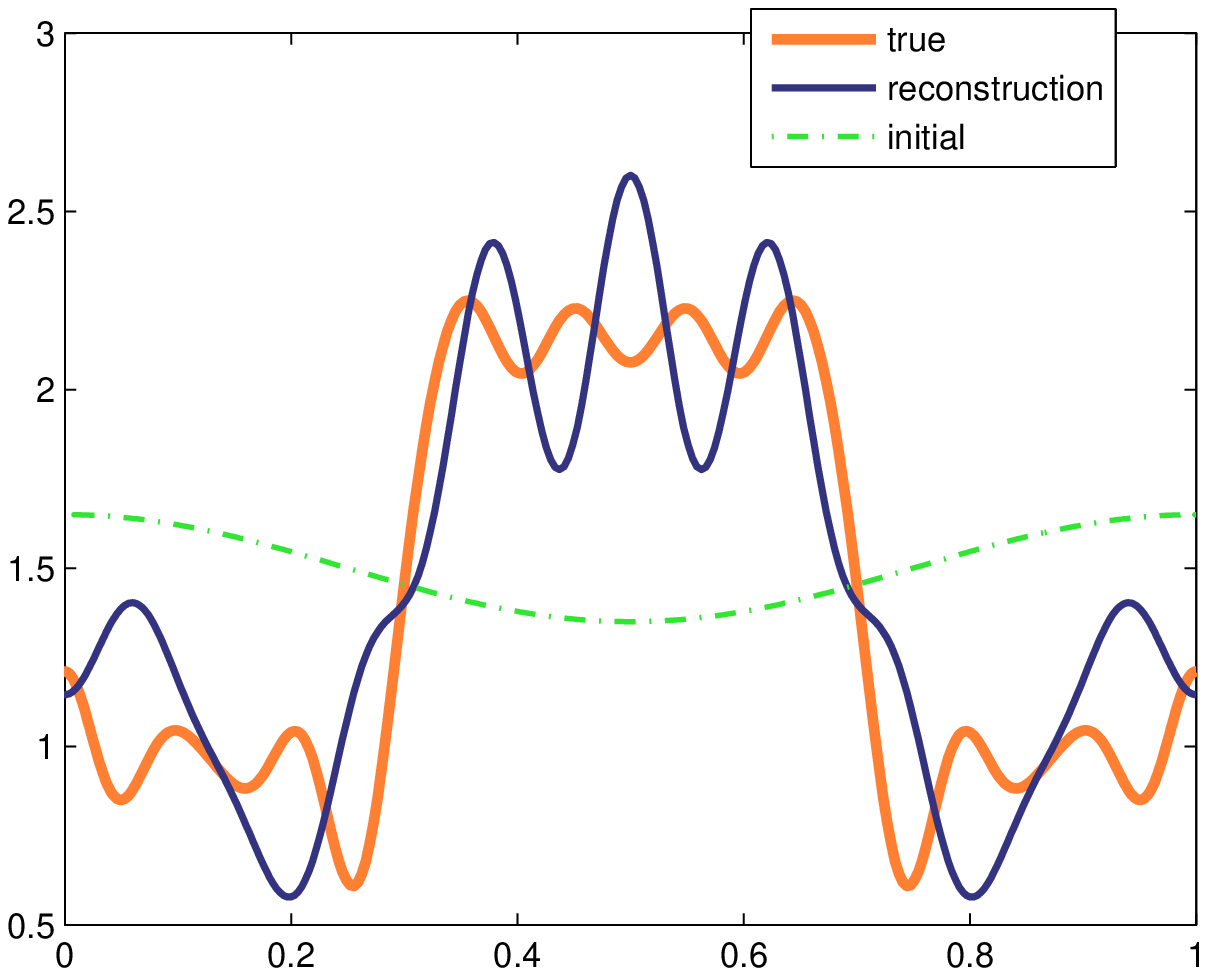}
      \caption{ $K=4$, $M=10$ }
    \end{subfigure}
    \begin{subfigure}{0.32\textwidth}
      \includegraphics[width=\textwidth]{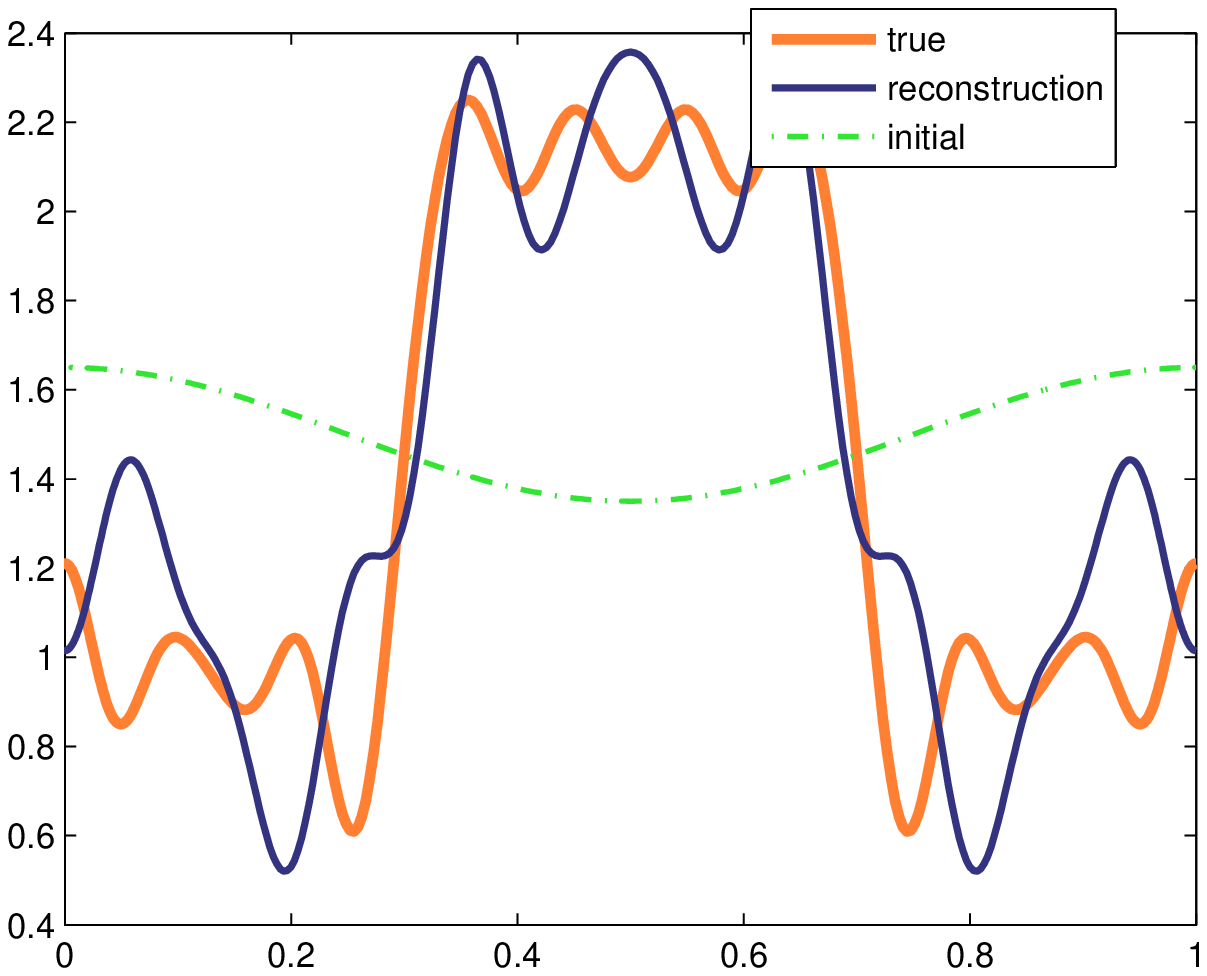}
      \caption{ $K=4$, $M=12$}
    \end{subfigure}\\
        \begin{subfigure}{0.32\textwidth}
      \includegraphics[width=\textwidth]{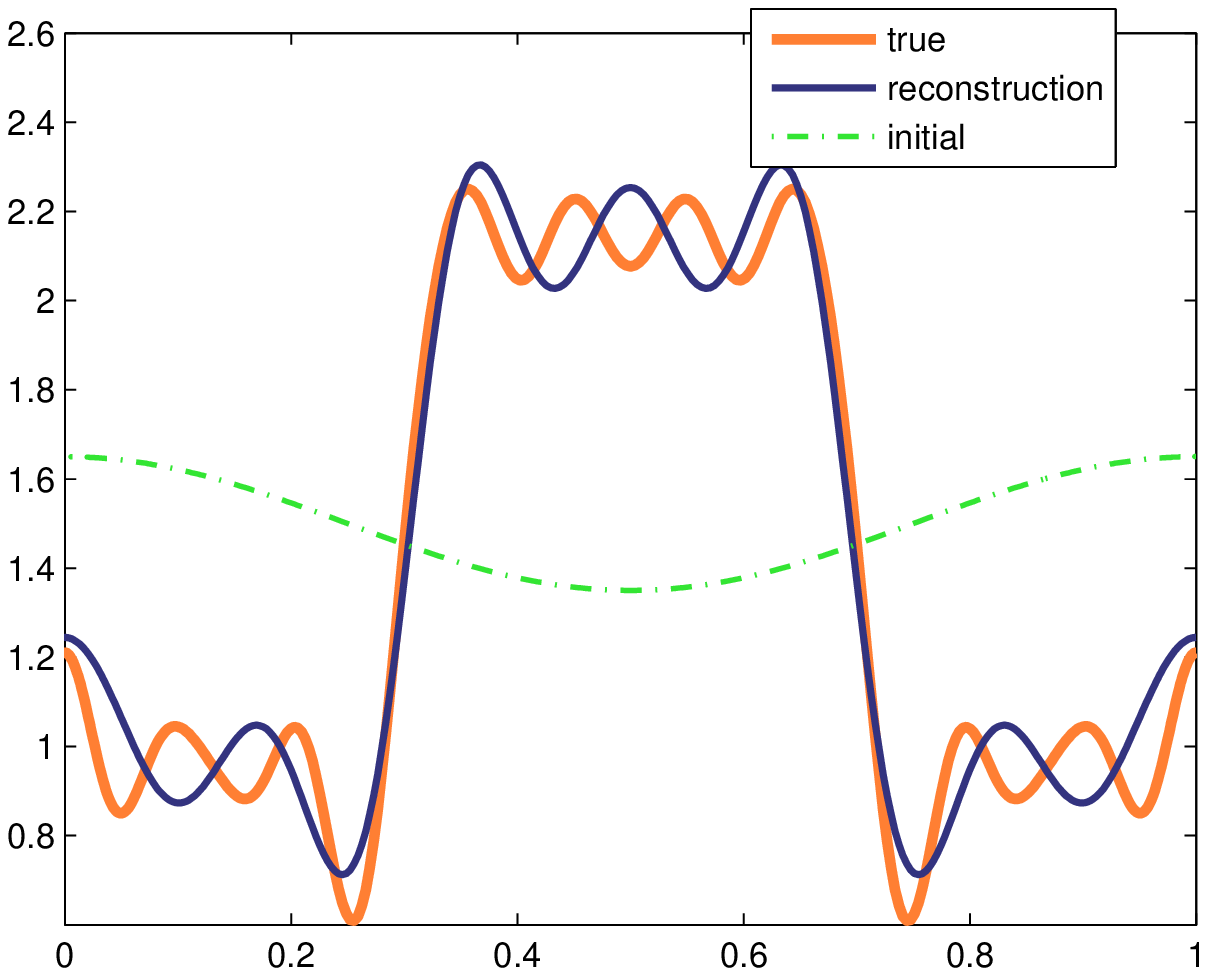}
      \caption{ $K=10$, $M=8$  }
    \end{subfigure}
    \begin{subfigure}{0.32\textwidth}
      \includegraphics[width=\textwidth]{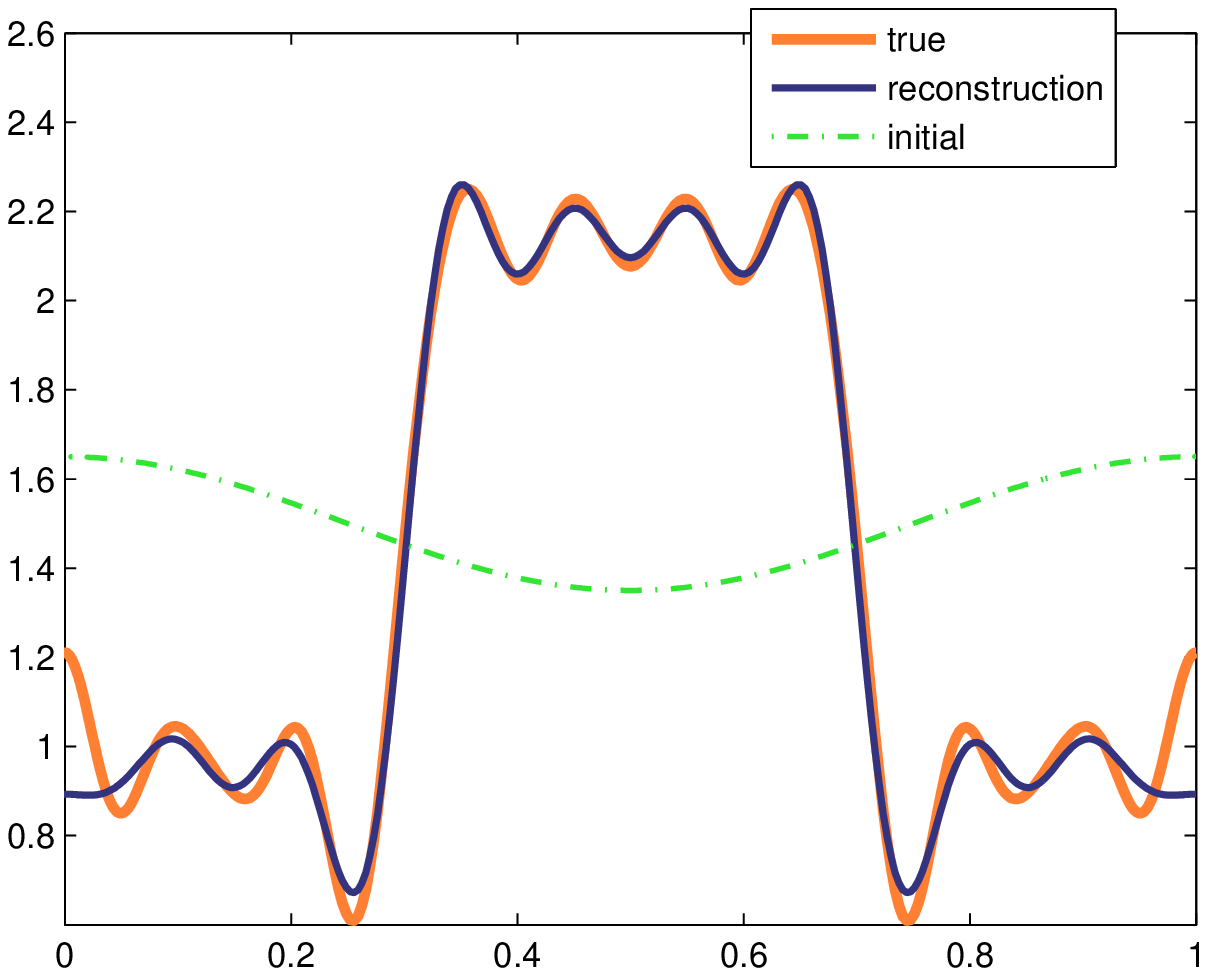}
      \caption{ $K=10$, $M=10$ }
    \end{subfigure}
    \begin{subfigure}{0.32\textwidth}
      \includegraphics[width=\textwidth]{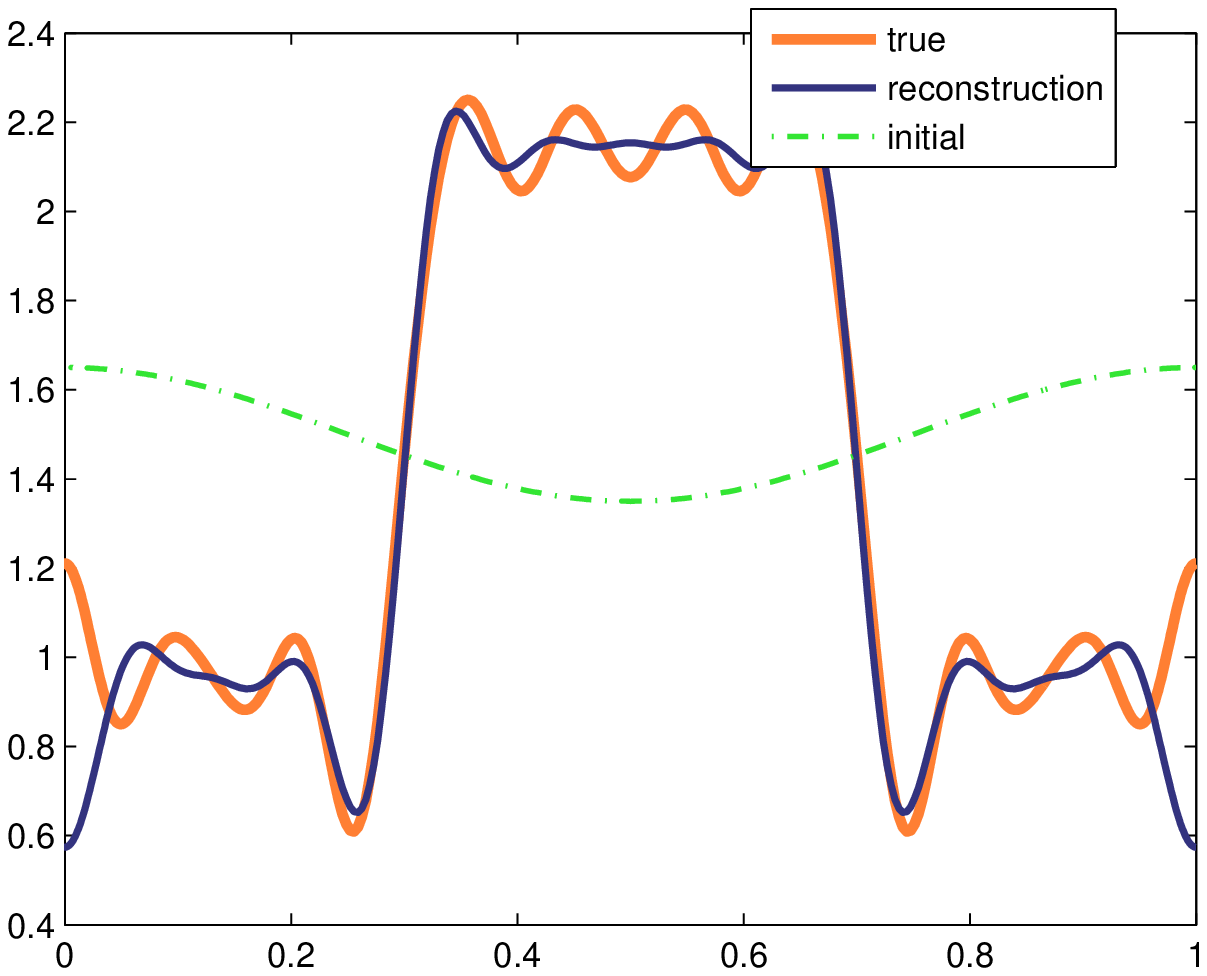}
      \caption{ $K=10$, $M=12$}
    \end{subfigure}\\
    \begin{subfigure}{0.32\textwidth}
      \includegraphics[width=\textwidth]{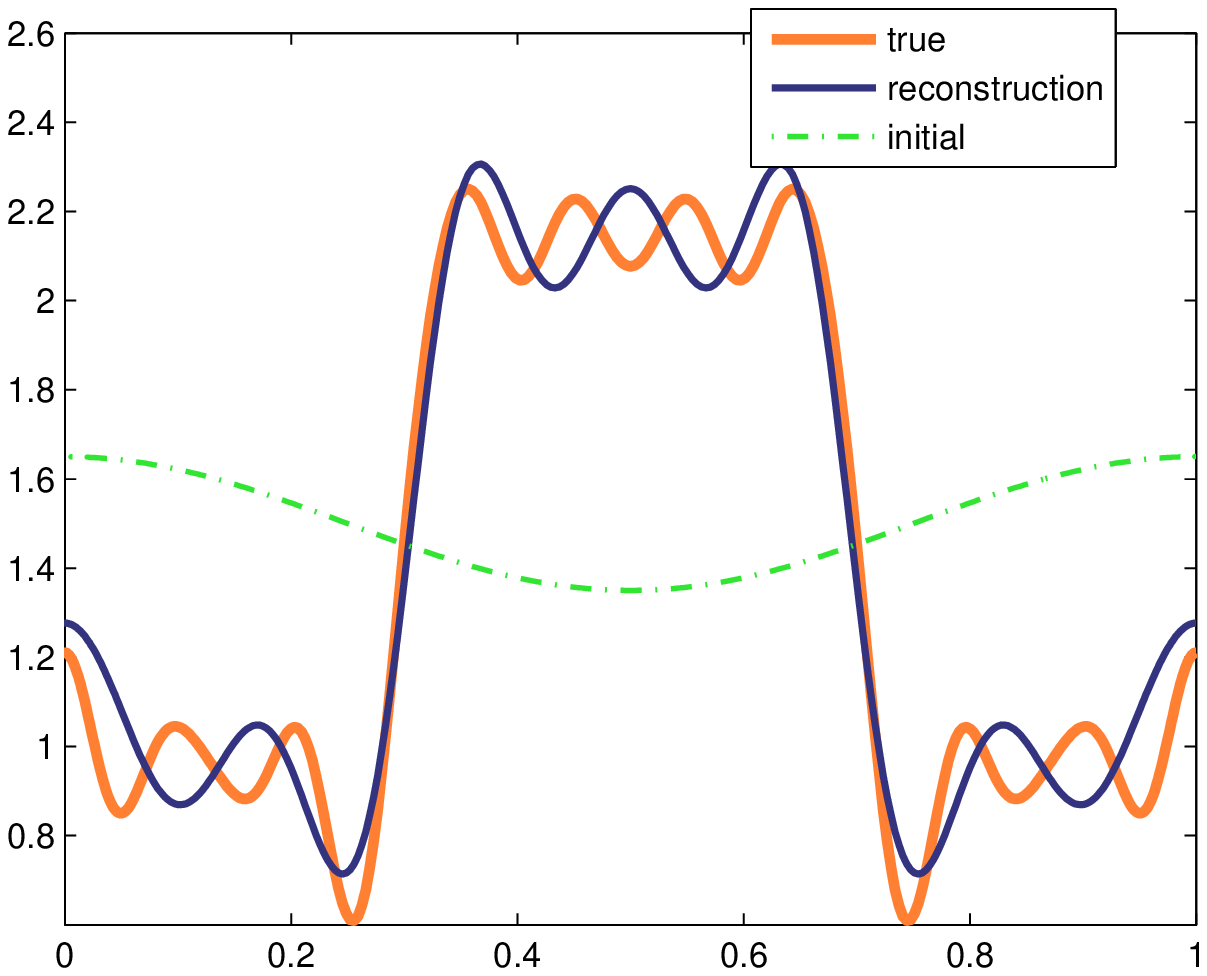}
      \caption{ $K=50$, $M=8$}
    \end{subfigure}
    \begin{subfigure}{0.32\textwidth}
      \includegraphics[width=\textwidth]{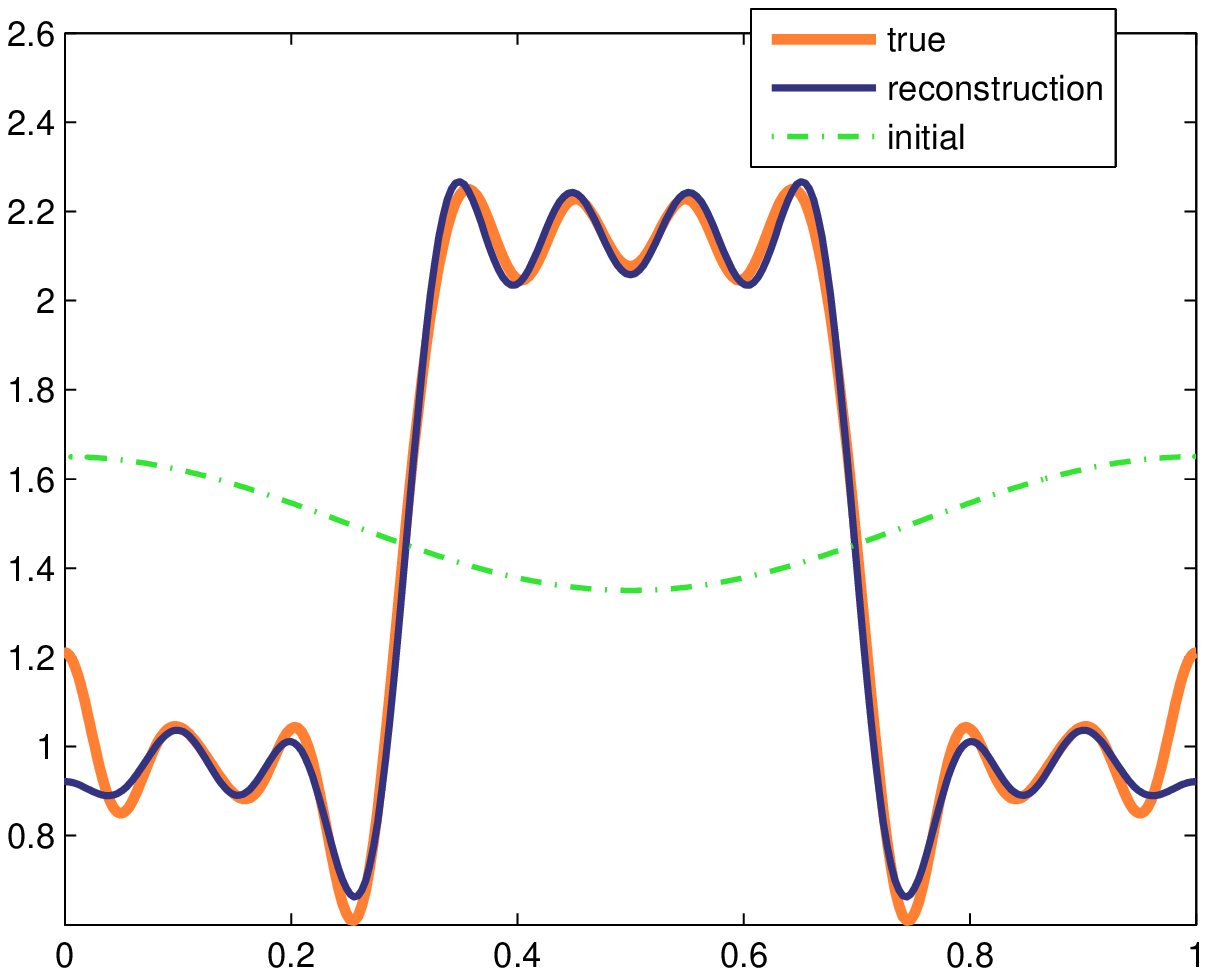}
      \caption{ $K=50$, $M=10$}
    \end{subfigure}
    \begin{subfigure}{0.32\textwidth}
      \includegraphics[width=\textwidth]{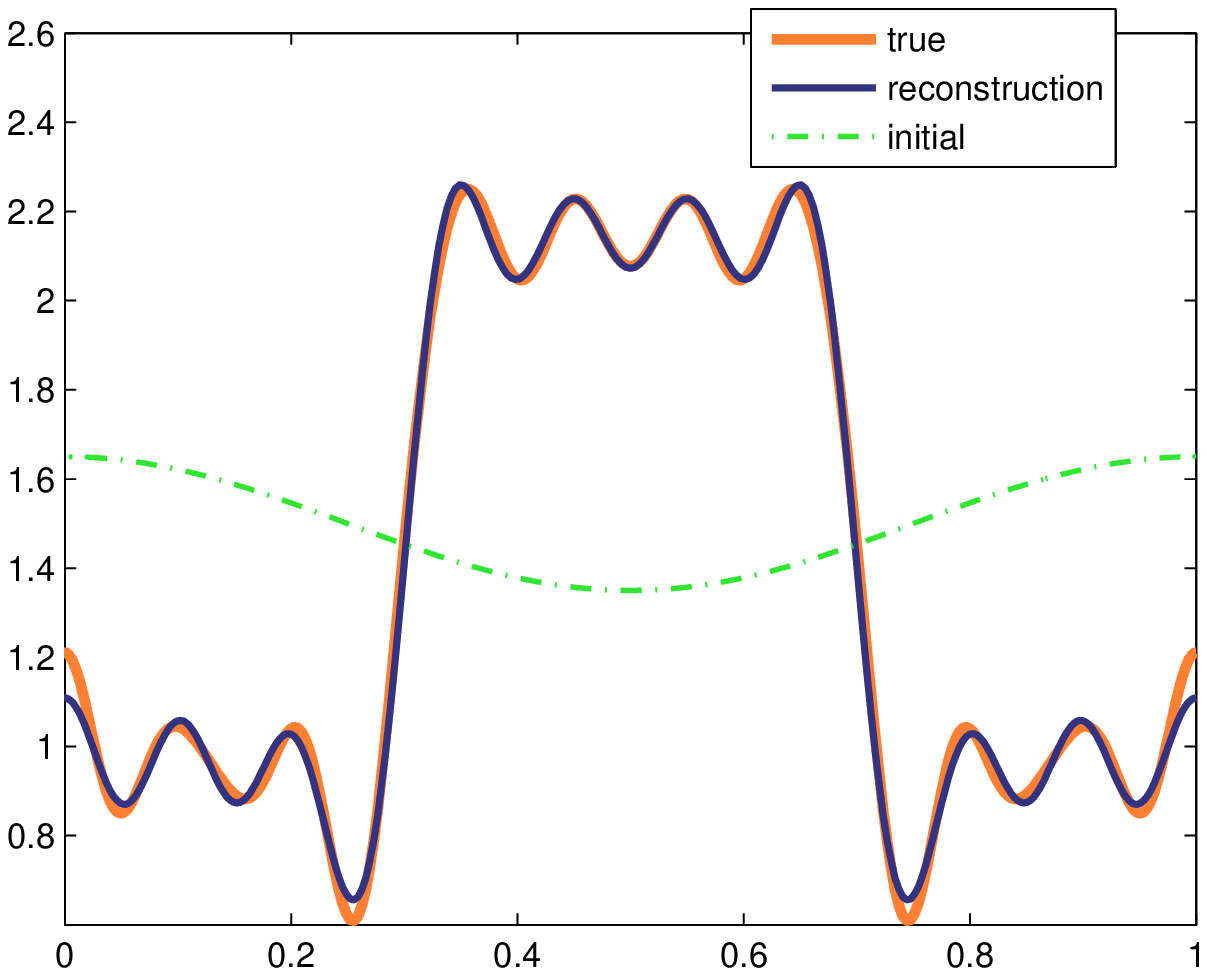}
      \caption{ $K=50$, $M=12$}
    \end{subfigure}
    \caption{Reconstruction of $\alpha_M$ in Example \ref{example2} with  $K_1=J=100$ and $N=300$. }\label{Fig_ex2}
  \end{figure}

\end{example}

\begin{example} \label{example3}
In this example, we show a non-smooth damping coefficient reconstruction. Here we set
\begin{align*}
\alpha(x) = \left\{
         \begin{array}{ll}
           2, & x\in [0,0.3] \\
           3, & x\in (0.3,0.7) \\
           2, & x\in [0.7,1]
         \end{array}
       \right.
\end{align*}
The non-smoothness inevitably results in more difficulties for reconstruction. In order to capture the discontinuity, we actually need quite a lot modes in Fourier expansion. However, on the other hand, the number $M$ needs to be chosen as a regularization parameter. The results are shown in Figure \ref{Fig_ex3}.
\begin{figure}[t!]
\begin{subfigure}{0.32\textwidth}
      \includegraphics[width=\textwidth]{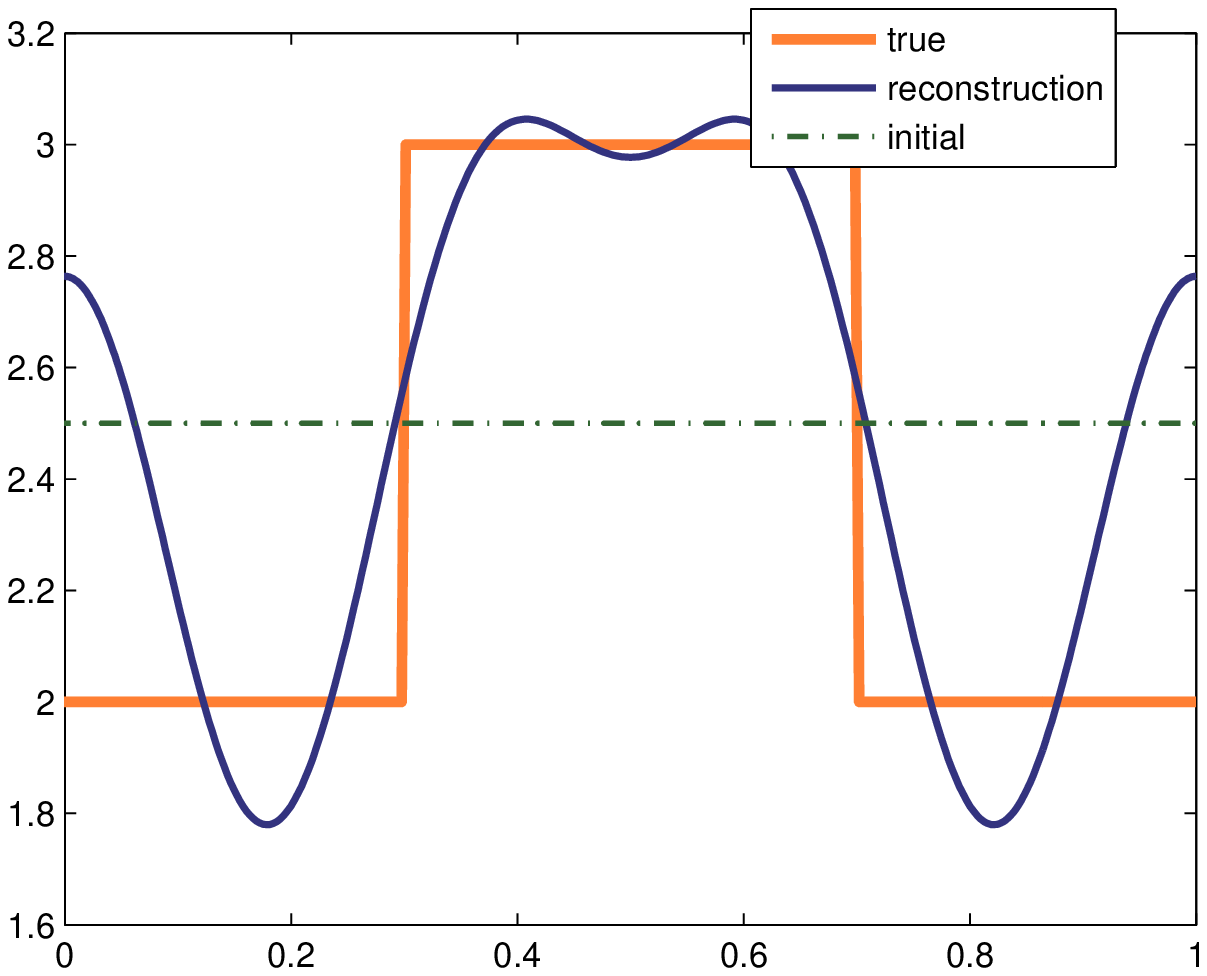}
      \caption{ M=4}
    \end{subfigure}
    \begin{subfigure}{0.32\textwidth}
      \includegraphics[width=\textwidth]{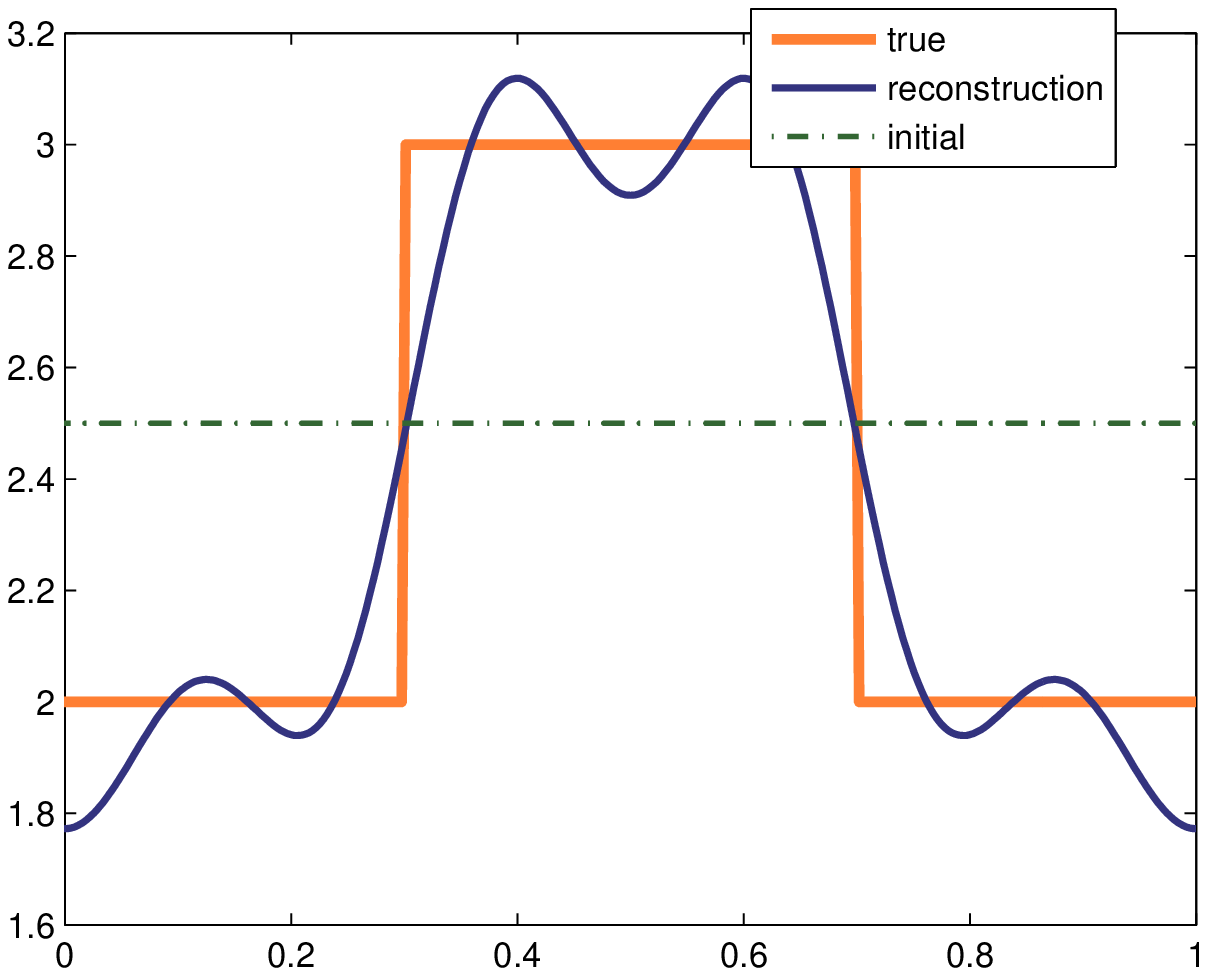}
      \caption{ M=5}
    \end{subfigure}
    \begin{subfigure}{0.32\textwidth}
      \includegraphics[width=\textwidth]{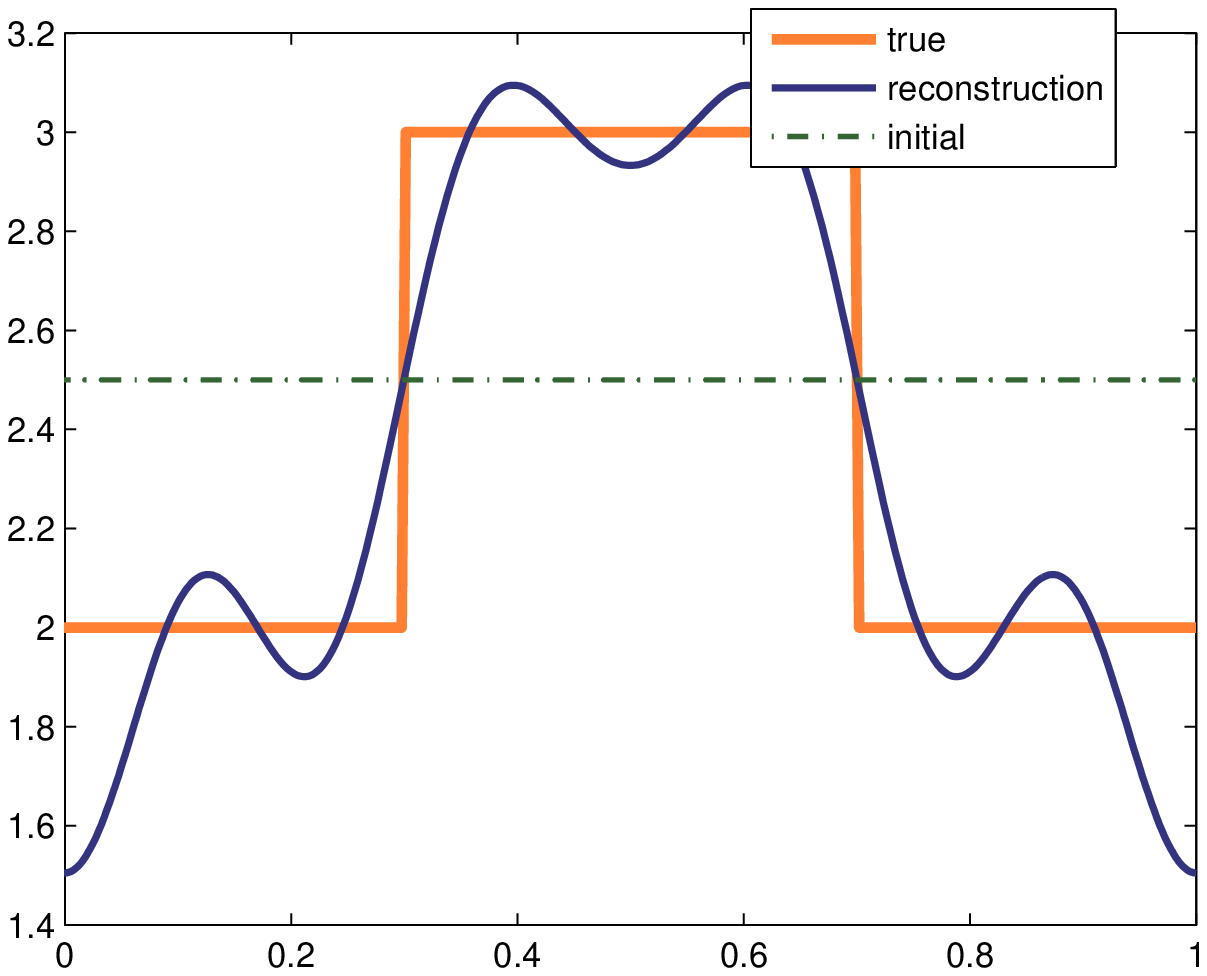}
      \caption{ M=6}
    \end{subfigure}\\
     \begin{subfigure}{0.32\textwidth}
      \includegraphics[width=\textwidth]{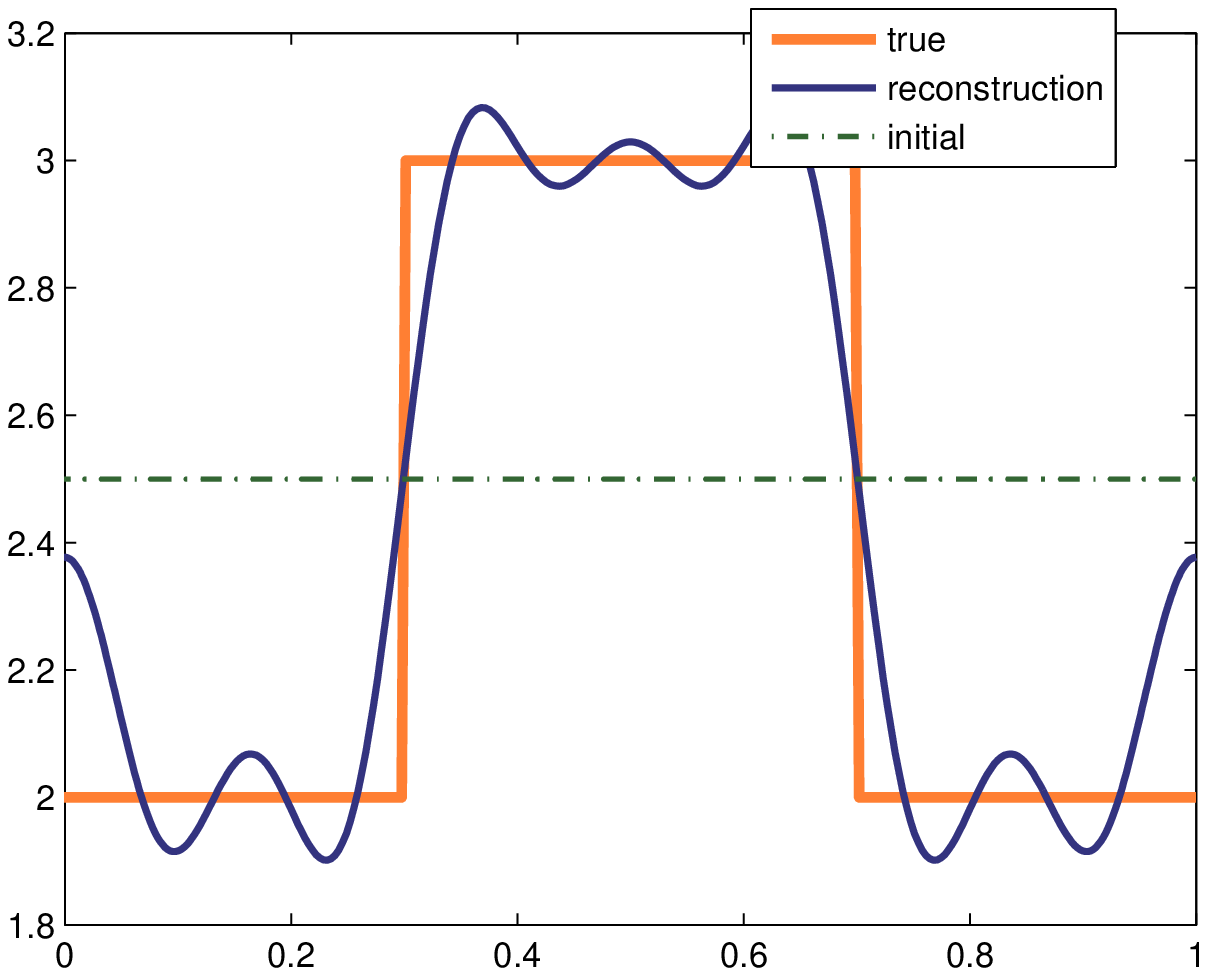}
      \caption{  M=7}
    \end{subfigure}
         \begin{subfigure}{0.32\textwidth}
      \includegraphics[width=\textwidth]{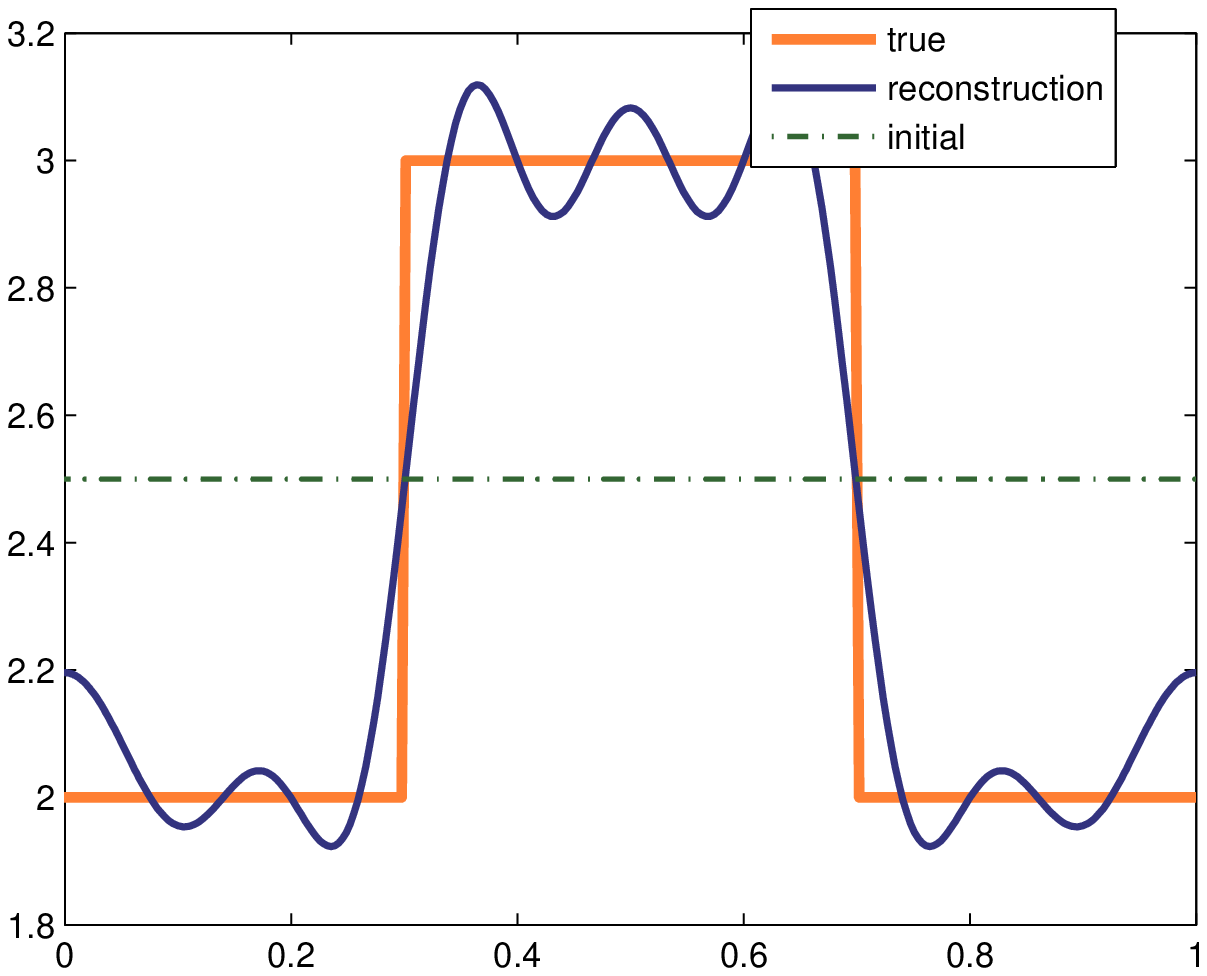}
      \caption{  M=8}
    \end{subfigure}
    \begin{subfigure}{0.32\textwidth}
      \includegraphics[width=\textwidth]{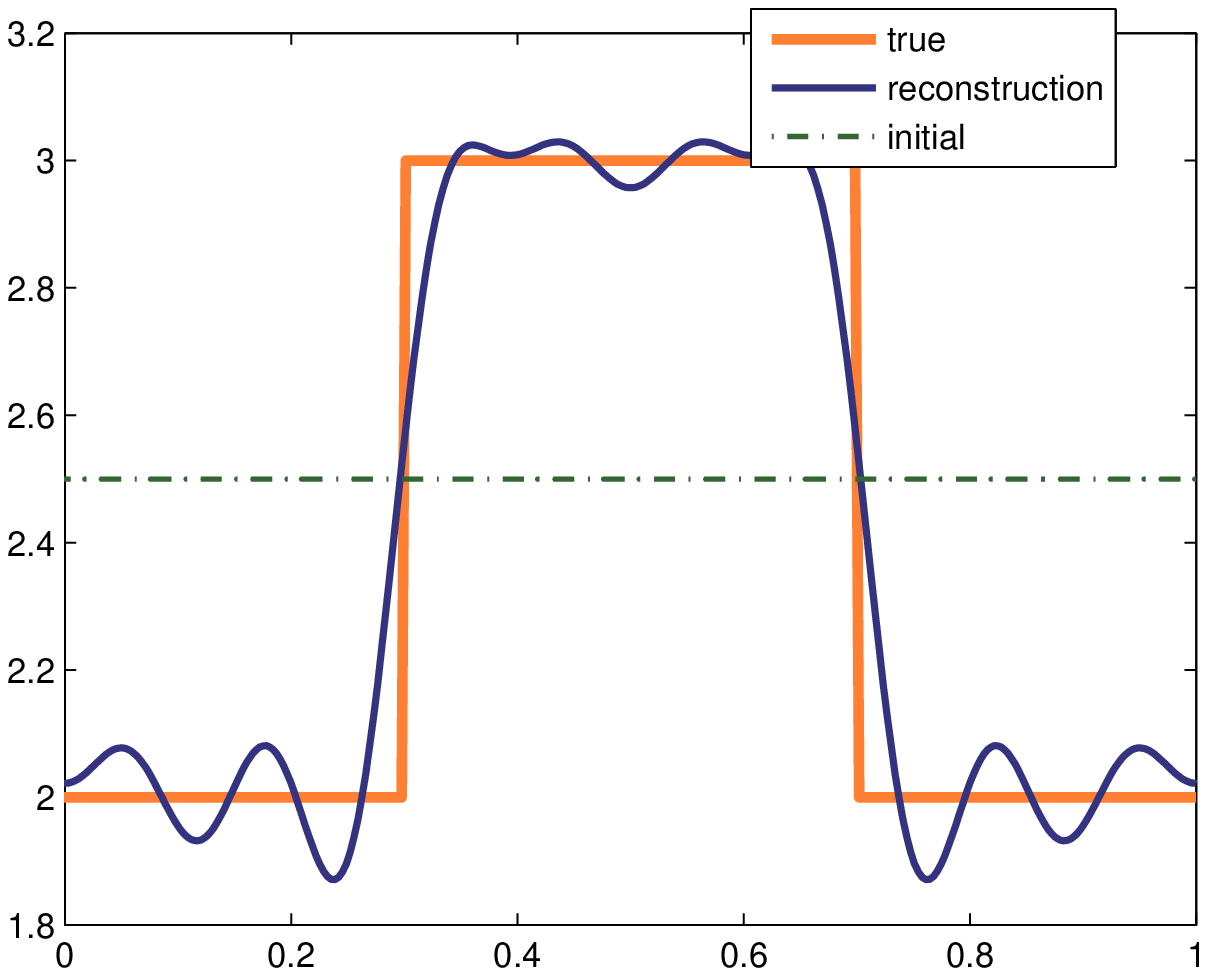}
      \caption{  M=9}
    \end{subfigure}
    \caption{Reconstruction of $\alpha_M$ in Example \ref{example3} with $K=10$, $K_1=J=100$ and $N=100$. }\label{Fig_ex3}
  \end{figure}
\end{example}
\begin{example}\label{example4}

 In this example, we test the stability of the algorithm with noisy data. Suppose the spectral data is polluted by random noise
\begin{equation*}
\lambda_j^{\delta} = \lambda_j + \delta\times\mbox{rand}(0,1)\times(1+i),\;\;j=1,2,\cdots
\end{equation*}
where $\delta$ is noise level and rand(0,1) represents the standard
    uniform distribution on the open interval (0,1). Moreover, we set the damping coefficient $\alpha(x)$ as follows
\begin{align*}
\alpha(x) = 1.5 + 0.2\cos(2\pi x) + 0.1\cos(4\pi x) - 0.04\cos(6\pi x) + 0.03\cos(8\pi x)
\end{align*}
As we know that both the noisy spectral data and the finite truncated series of eigenvalues $M$ result in approximation error in trace formulas. Hence the reconstruction of the damping coefficient is definitely influenced by these two parameters. Figure \ref{Fig_ex4}(a-c) shows numerical inversion results when $\delta=0.1\%, 0.5\%, 1\%$, respectively. It is clear that for $\delta=0.1\%$, when $M$ increases from 3 to 6, the reconstruction becomes better and better. However, when $\delta=1\%$ and $M$ increases, the reconstruction becomes better first and then worse, hence the optimal choice of $M$ is $M=4$ in Figure \ref{Fig_ex4}(c). We believe that if we utilize clean spectral data, i.e., $\delta=0$, the optimal $M$ should be larger.
\begin{figure}[t!]
    \begin{subfigure}{0.32\textwidth}
      \includegraphics[width=\textwidth]{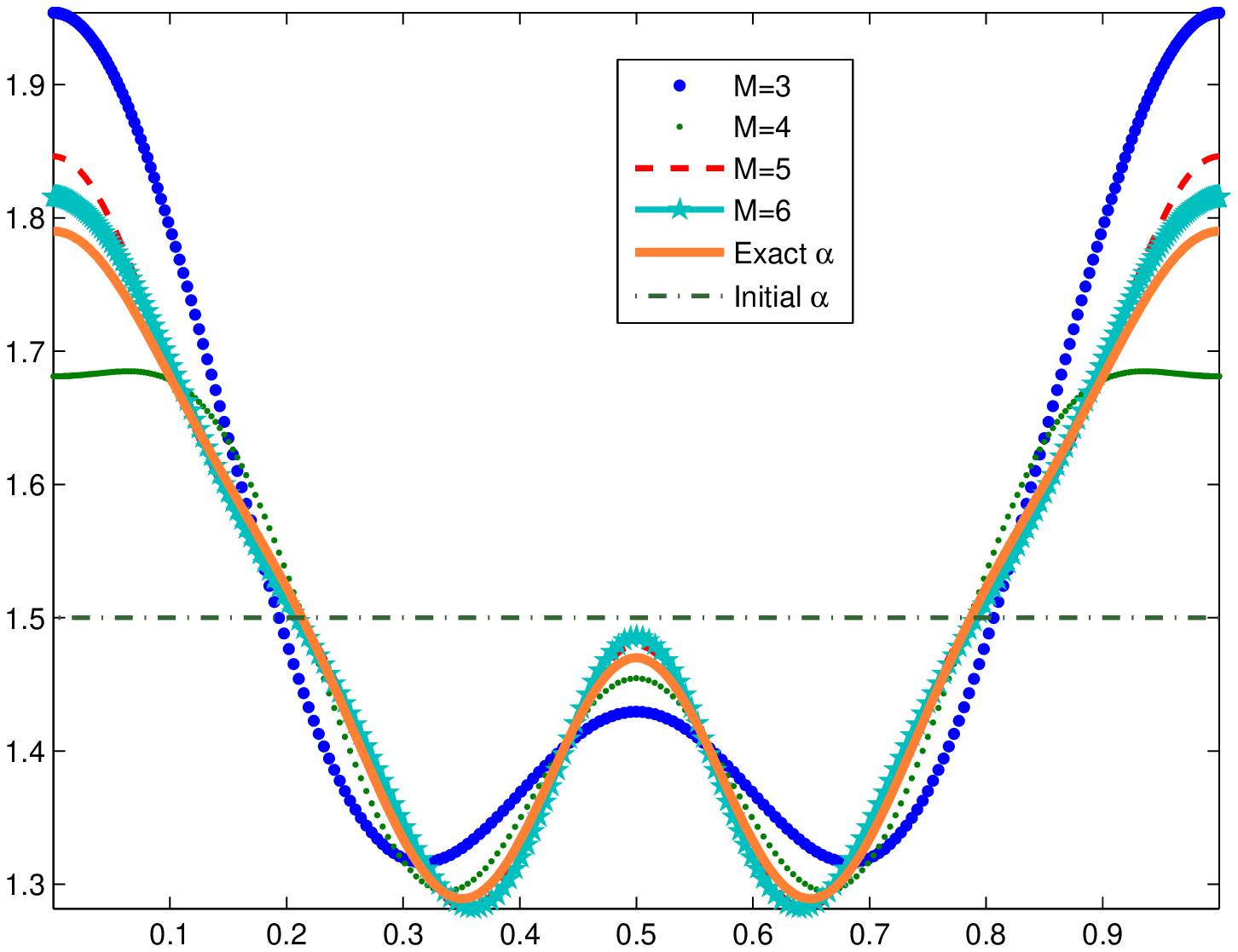}
      \caption{ $\delta$=0.1\%}
    \end{subfigure}
     \begin{subfigure}{0.32\textwidth}
      \includegraphics[width=\textwidth]{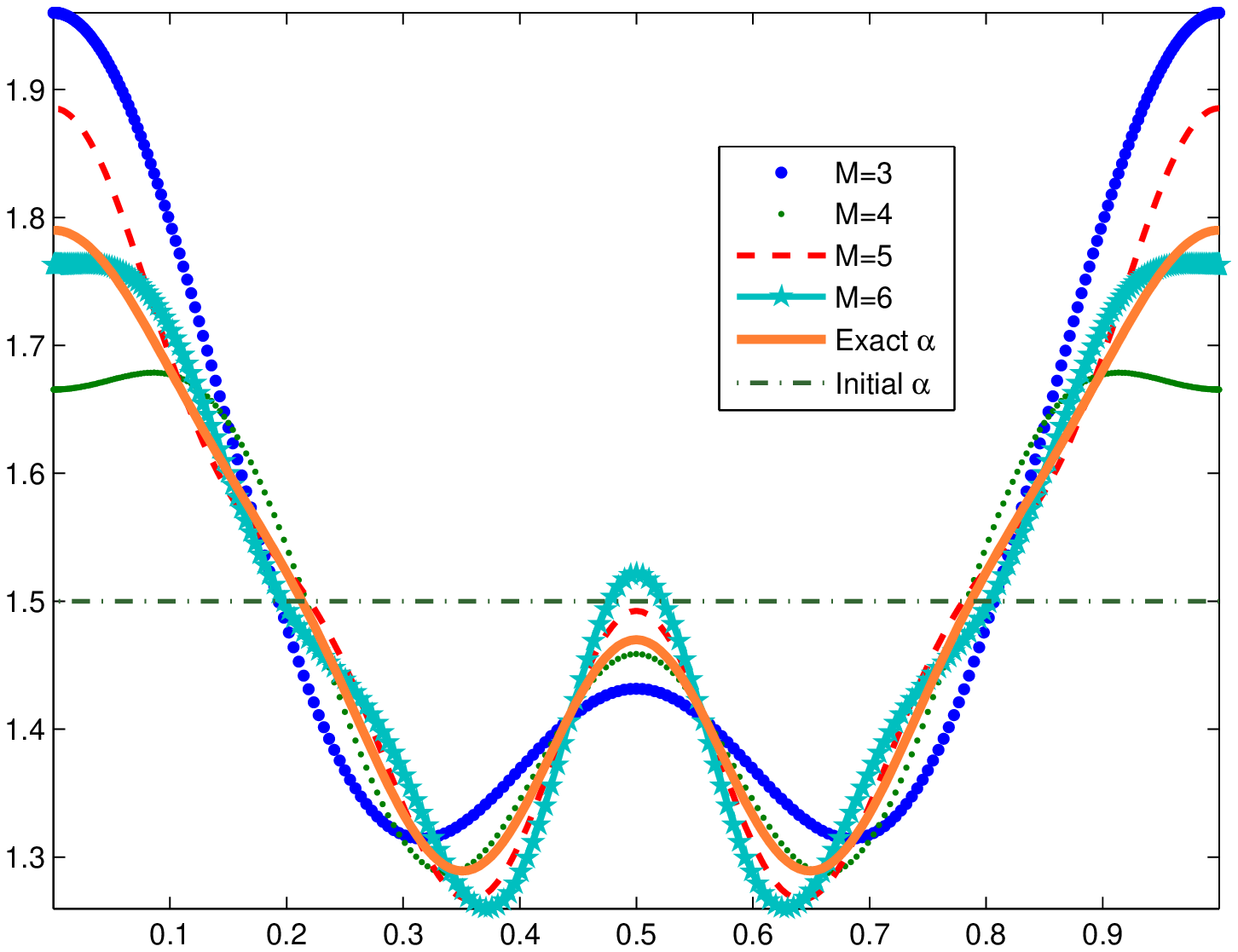}
      \caption{  $\delta$ = 0.5\%}
    \end{subfigure}
    \begin{subfigure}{0.32\textwidth}
      \includegraphics[width=\textwidth]{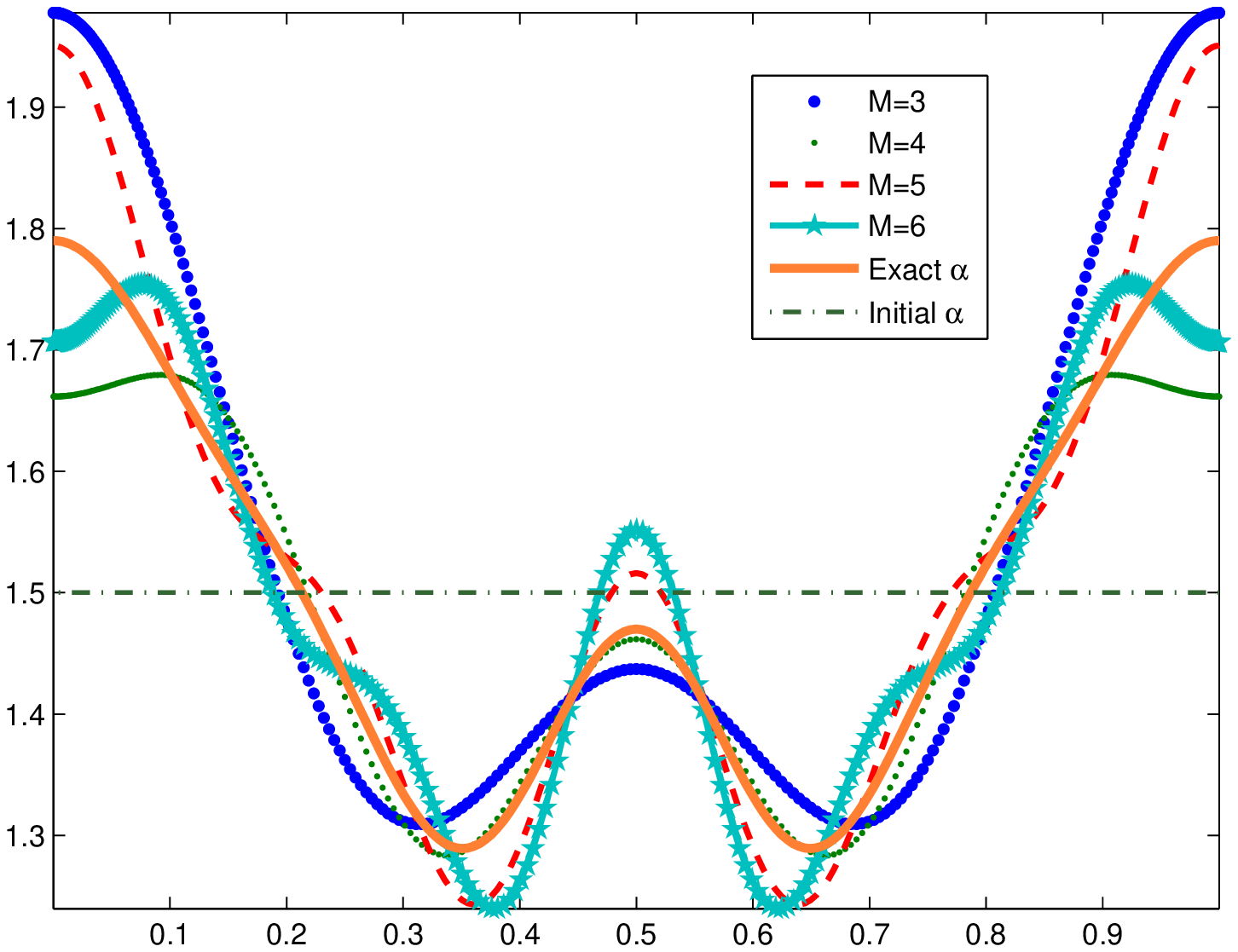}
      \caption{  $\delta$ = 1\%}
    \end{subfigure}
    \caption{Reconstruction of $\alpha_M$ in Example \ref{example4} with $K=M$, $K_1=J=75$ and $N=75$. }\label{Fig_ex4}
  \end{figure}

\end{example}

\begin{example}\label{example5}
In this example, we reconstruct a large damping coefficient. We set 
\begin{align*}
\begin{split}
\alpha_{true}(x)=&\pi\Big(1.5567+1.4896\cos2\pi x+0.3\cos4\pi x+0.1\cos6\pi x \\
&\quad +0.2\cos8\pi x+0.2\cos 10\pi x + 0.2\cos12\pi x\Big).
\end{split}
\end{align*}
which can be viewed as a perturbation of \eqref{example_larged}. According to the discussion for previous examples, we choose $K=M$, $K_1=J=75$. We remark here that the parameter $N$, the highest degree of the polynomials used in the algorithm, can not be large. The underlying reason lies in the behaviors of the chosen polynomials. From Figure \ref{eigen_distribution_extreme}, we see that the reciprocal of some eigenvalues, i.e., $z = \lambda^{-1}$ are not close to the circle $B_{(-\frac{1}{\alpha_0},0)}(\frac{1}{\alpha_0})$, and $T_n(z) = z(\alpha_0z +1)^{n-1}$ changes rapidly away from the circle when $n$ is large. Since the limited number of polynomials can not discriminate enough eigenvalues, the number of basis functions $M$ can not be large either. Also, for large damping term, the convergence of the algorithm is very sensitive to the initial guess. However, one can adopt a multi-step optimization scheme as mentioned in \cite{XZ1}: starting with small $M$ and use the reconstructed profile as the initial guess for the reconstruction with a slightly larger $M$, and so forth.

The results of numerical experiments are shown in Figure \ref{Fig_ex5}. We test for different $M$ and $N$. Since the true damping has 7 modes, it is clearly that the reconstruction for $M=K=7$ is better than $M=K=6$ and $M=K=8$ for the same $N$.
%

\begin{figure}[t!]
    \begin{subfigure}{0.32\textwidth}
      \includegraphics[width=\textwidth]{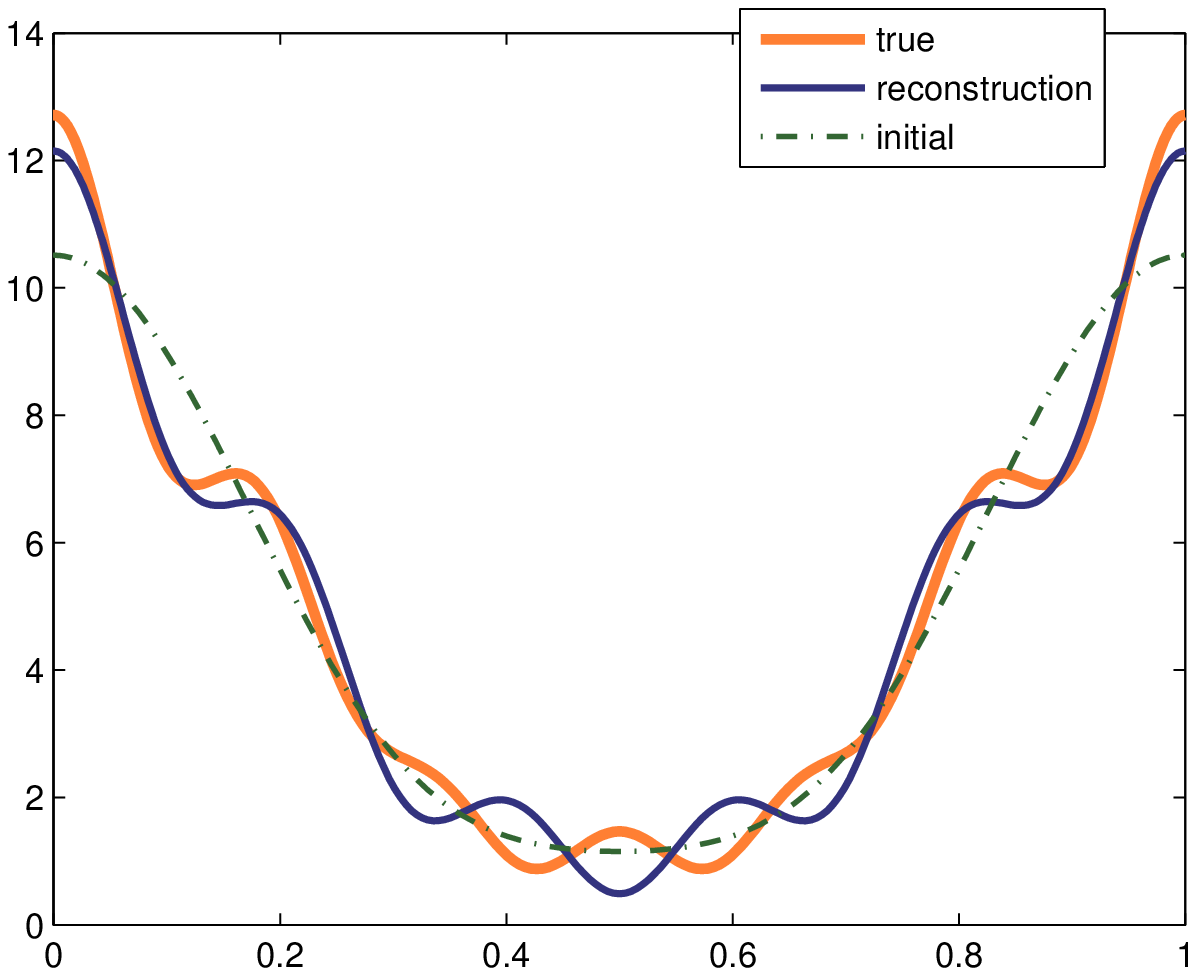}
      \caption{ K=6,N=25}
    \end{subfigure}
     \begin{subfigure}{0.32\textwidth}
      \includegraphics[width=\textwidth]{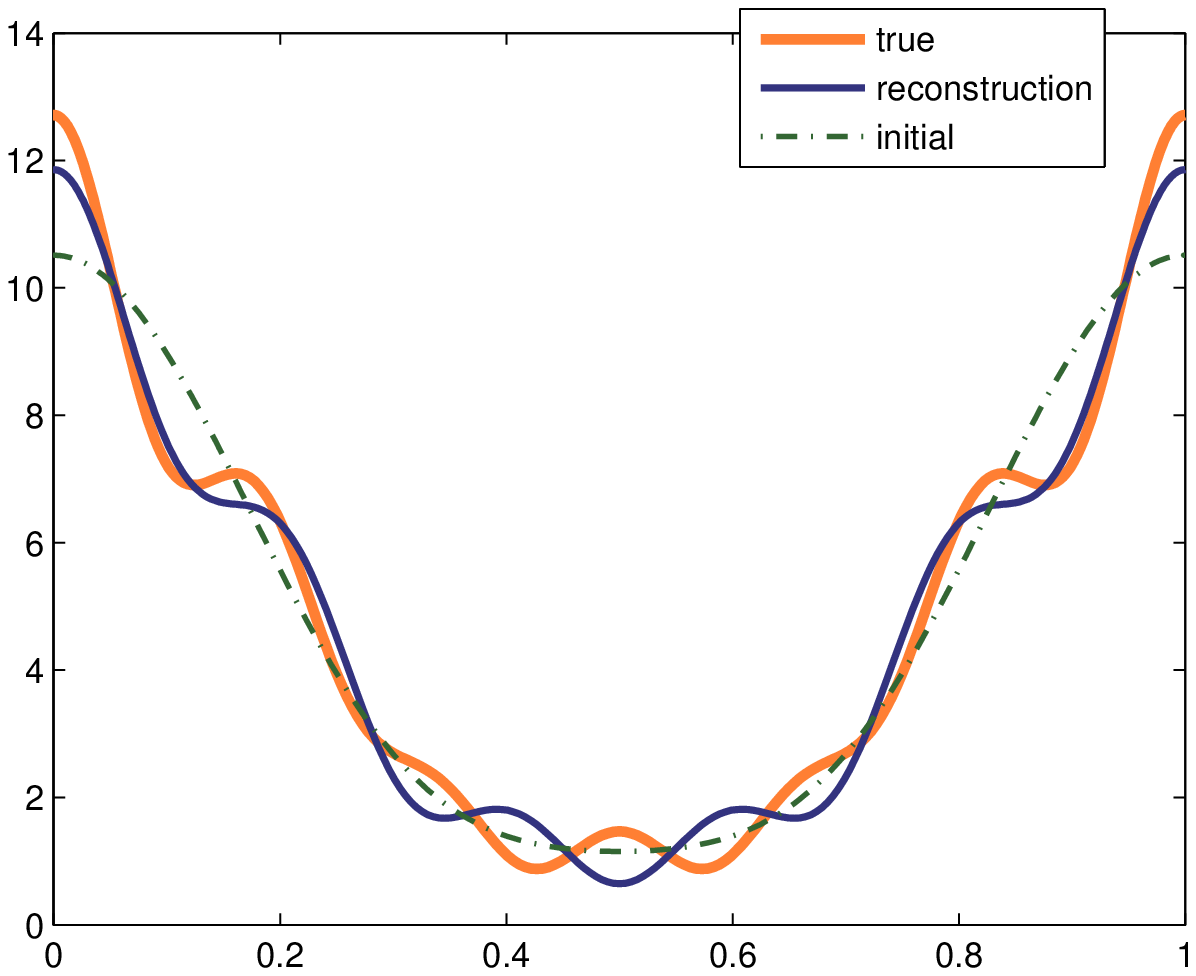}
      \caption{ K=6, N=30}
    \end{subfigure}
    \begin{subfigure}{0.32\textwidth}
      \includegraphics[width=\textwidth]{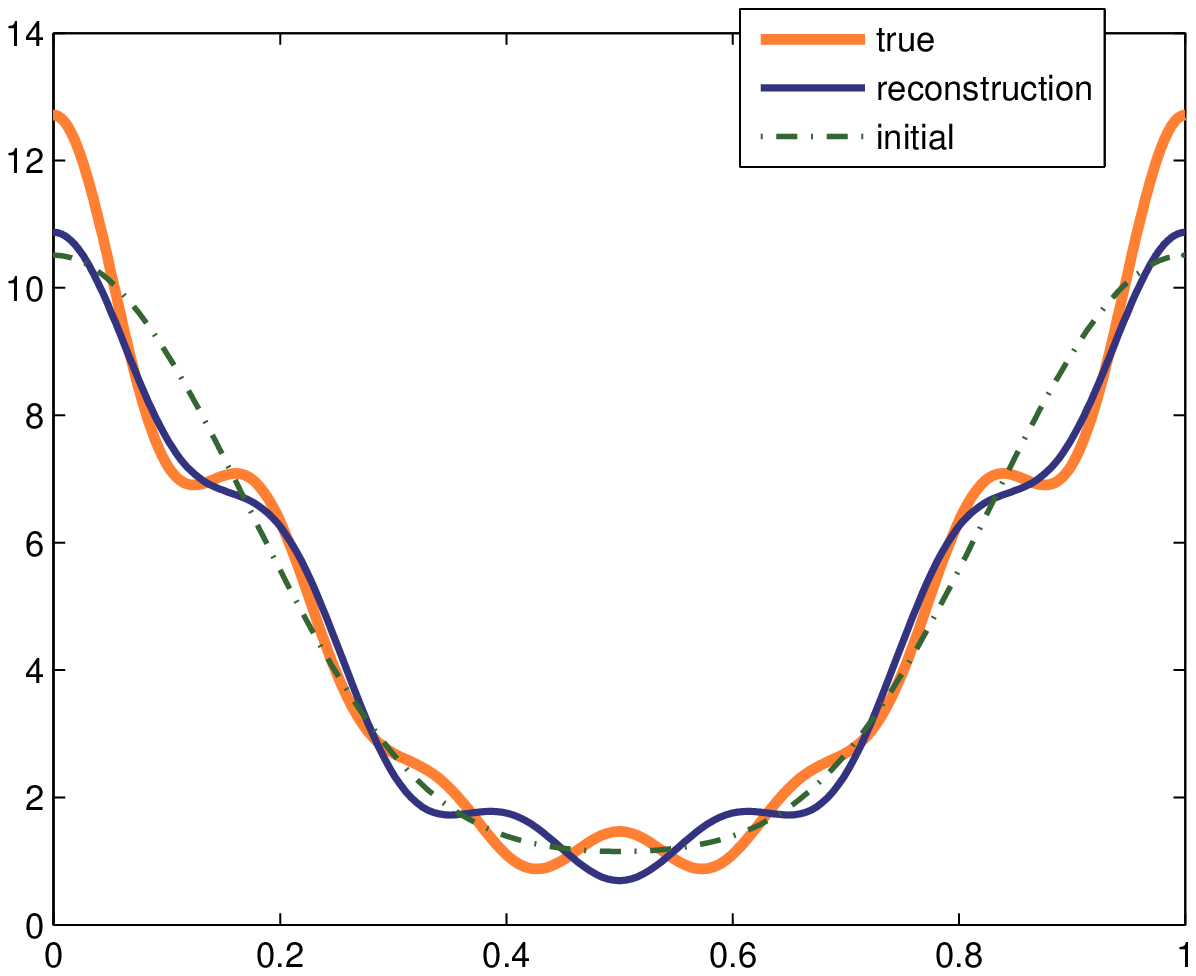}
      \caption{ K=6, N=35}
    \end{subfigure}\\
        \begin{subfigure}{0.32\textwidth}
      \includegraphics[width=\textwidth]{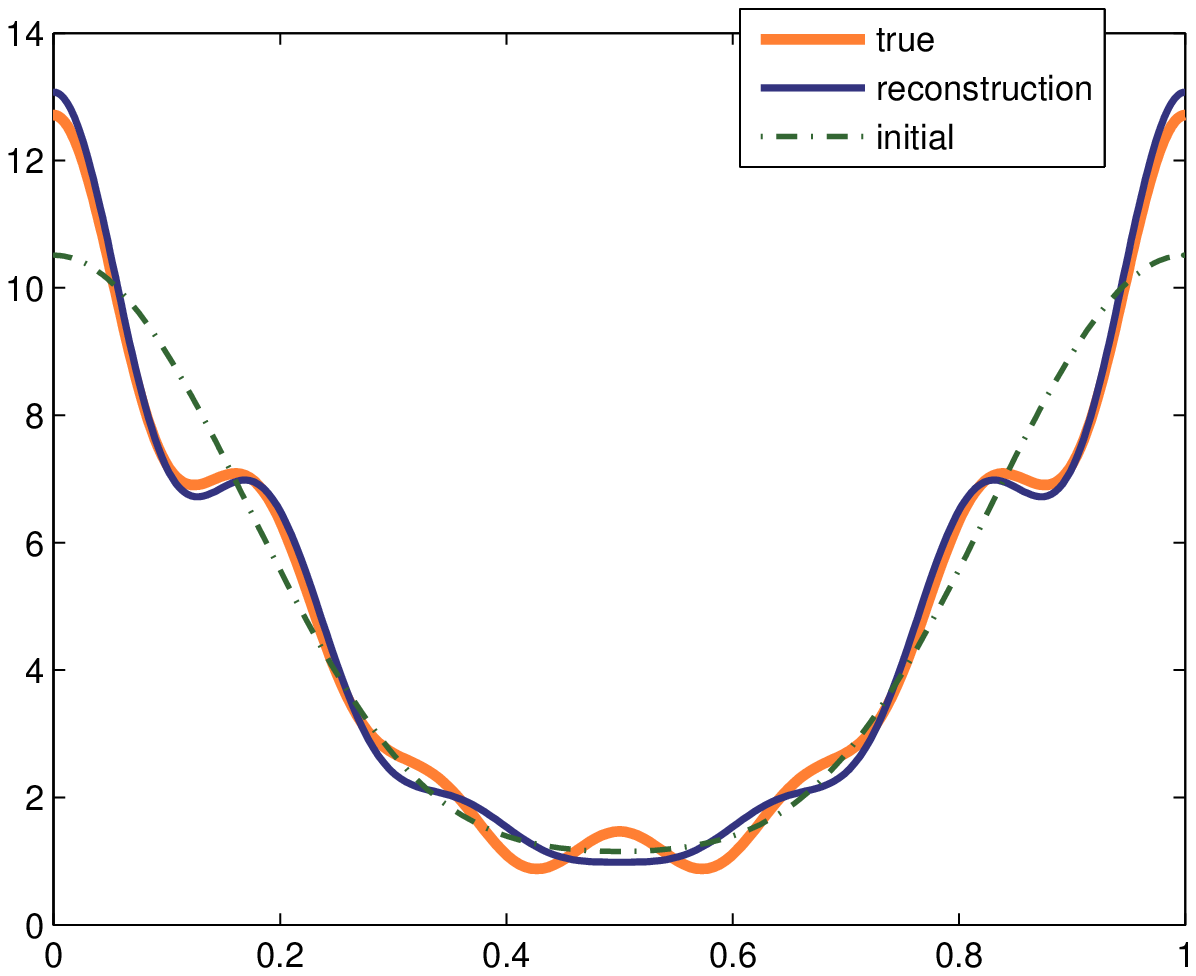}
      \caption{ K=7,N=25}
    \end{subfigure}
     \begin{subfigure}{0.32\textwidth}
      \includegraphics[width=\textwidth]{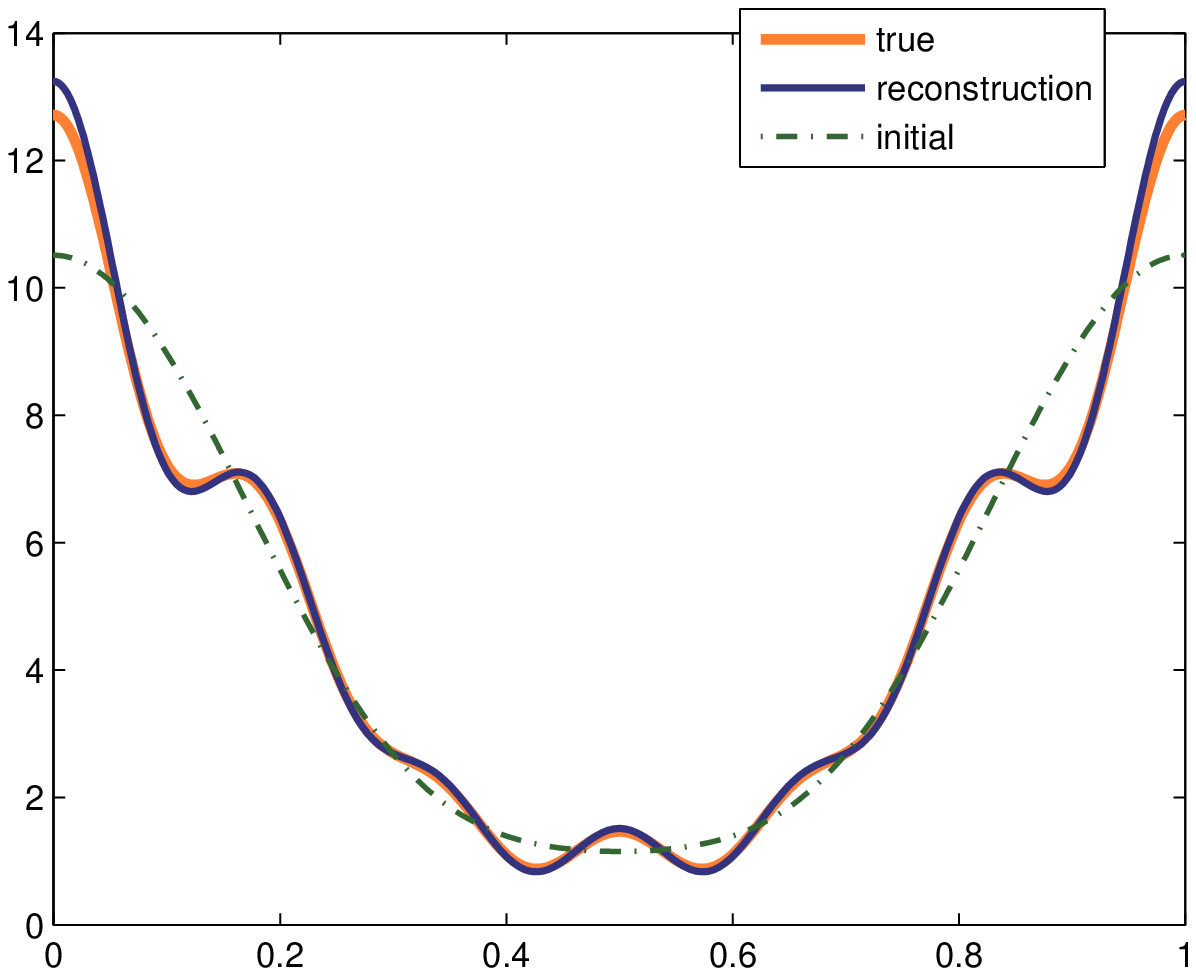}
      \caption{ K=7, N=30}
    \end{subfigure}
    \begin{subfigure}{0.32\textwidth}
      \includegraphics[width=\textwidth]{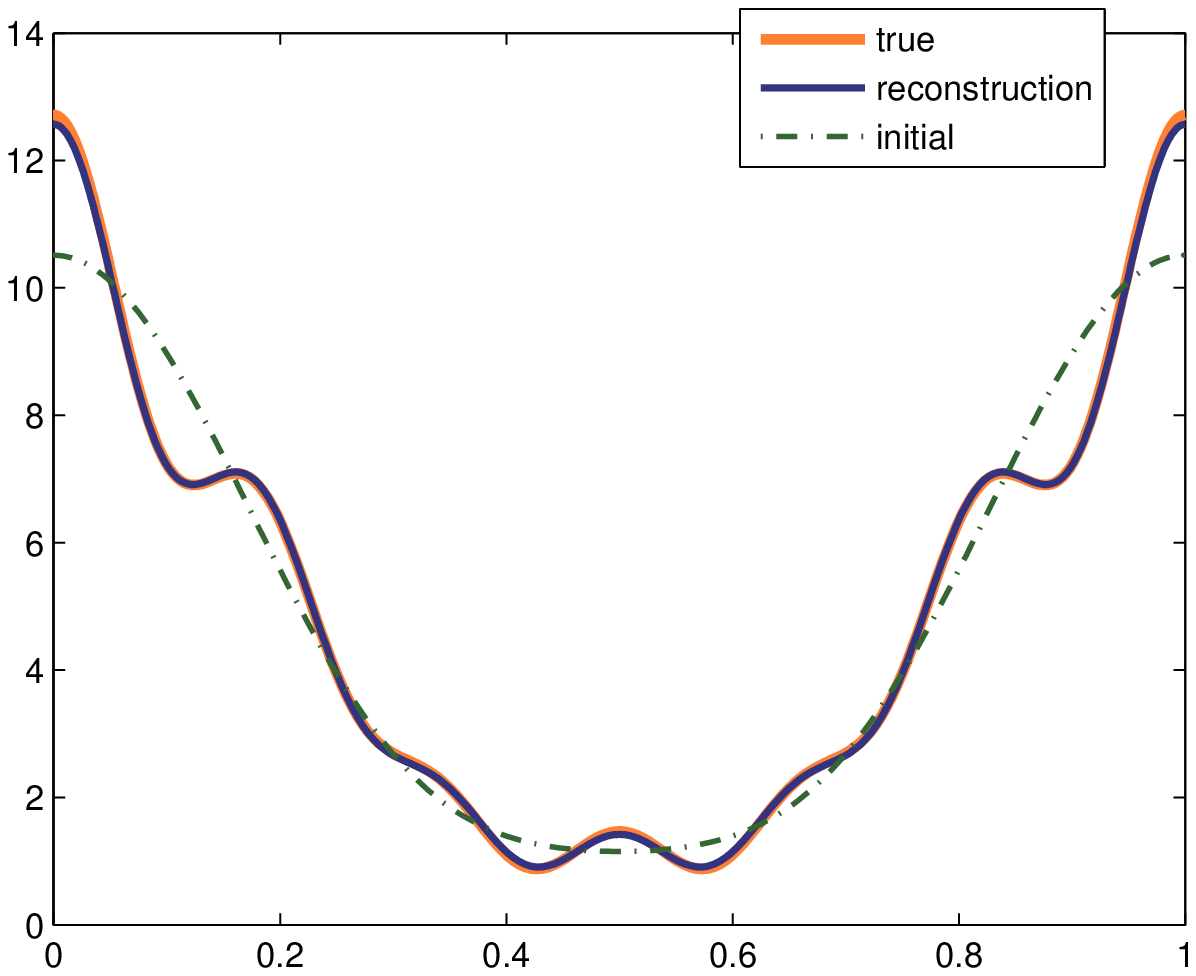}
      \caption{ K=7, N=35}
    \end{subfigure}\\
        \begin{subfigure}{0.33\textwidth}
      \includegraphics[width=\textwidth]{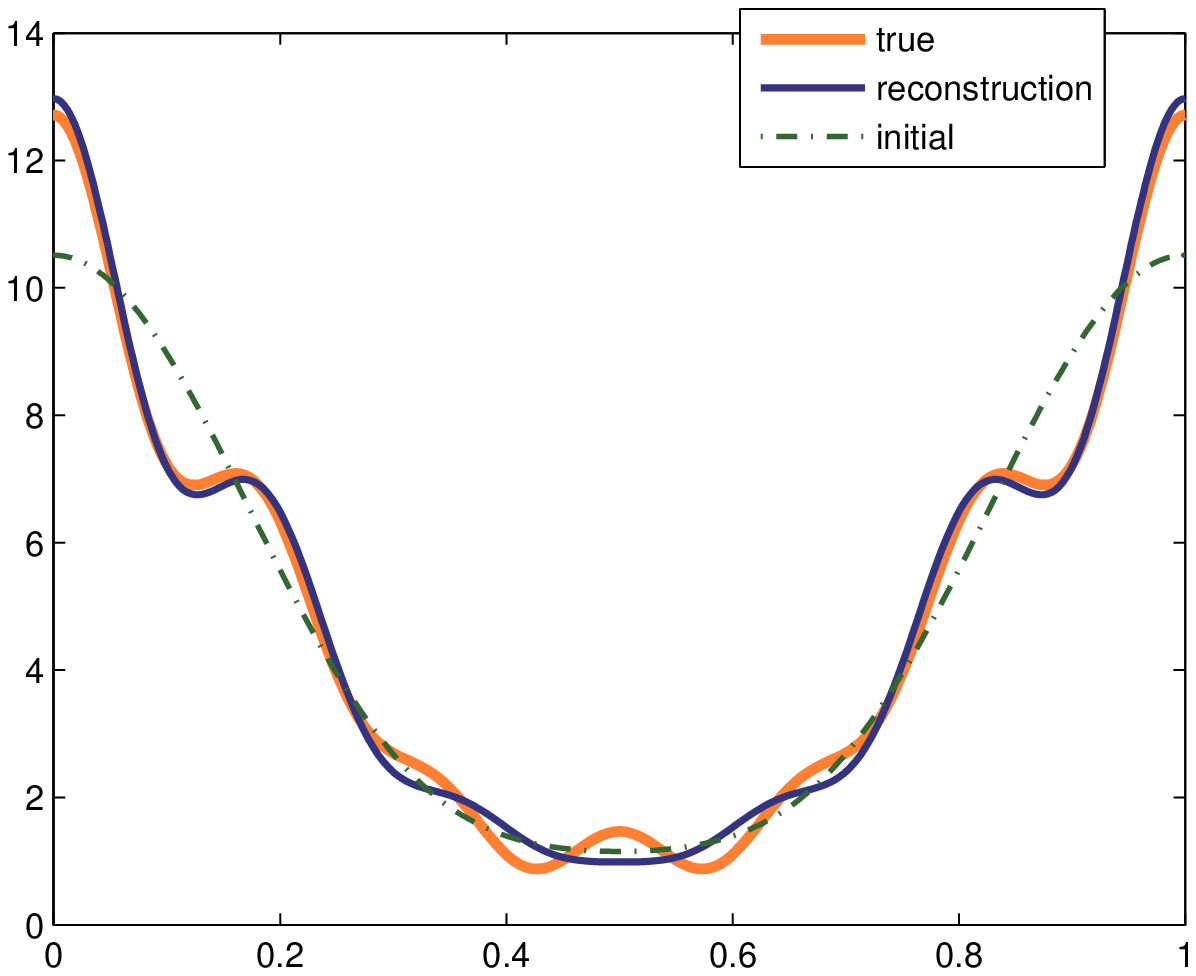}
      \caption{ K=8,N=25}
    \end{subfigure}
     \begin{subfigure}{0.32\textwidth}
      \includegraphics[width=\textwidth]{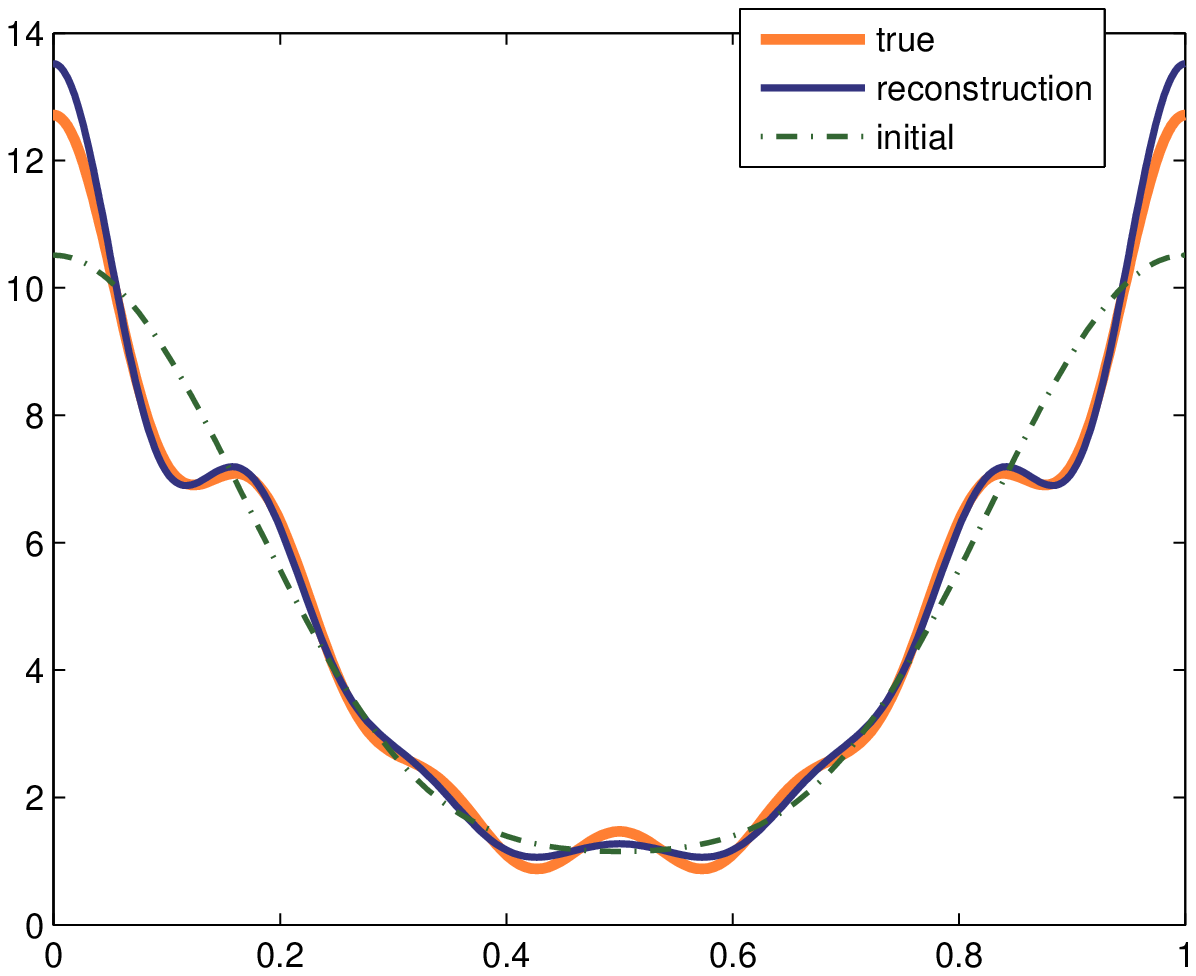}
      \caption{ K=8, N=30}
    \end{subfigure}
    \begin{subfigure}{0.32\textwidth}
      \includegraphics[width=\textwidth]{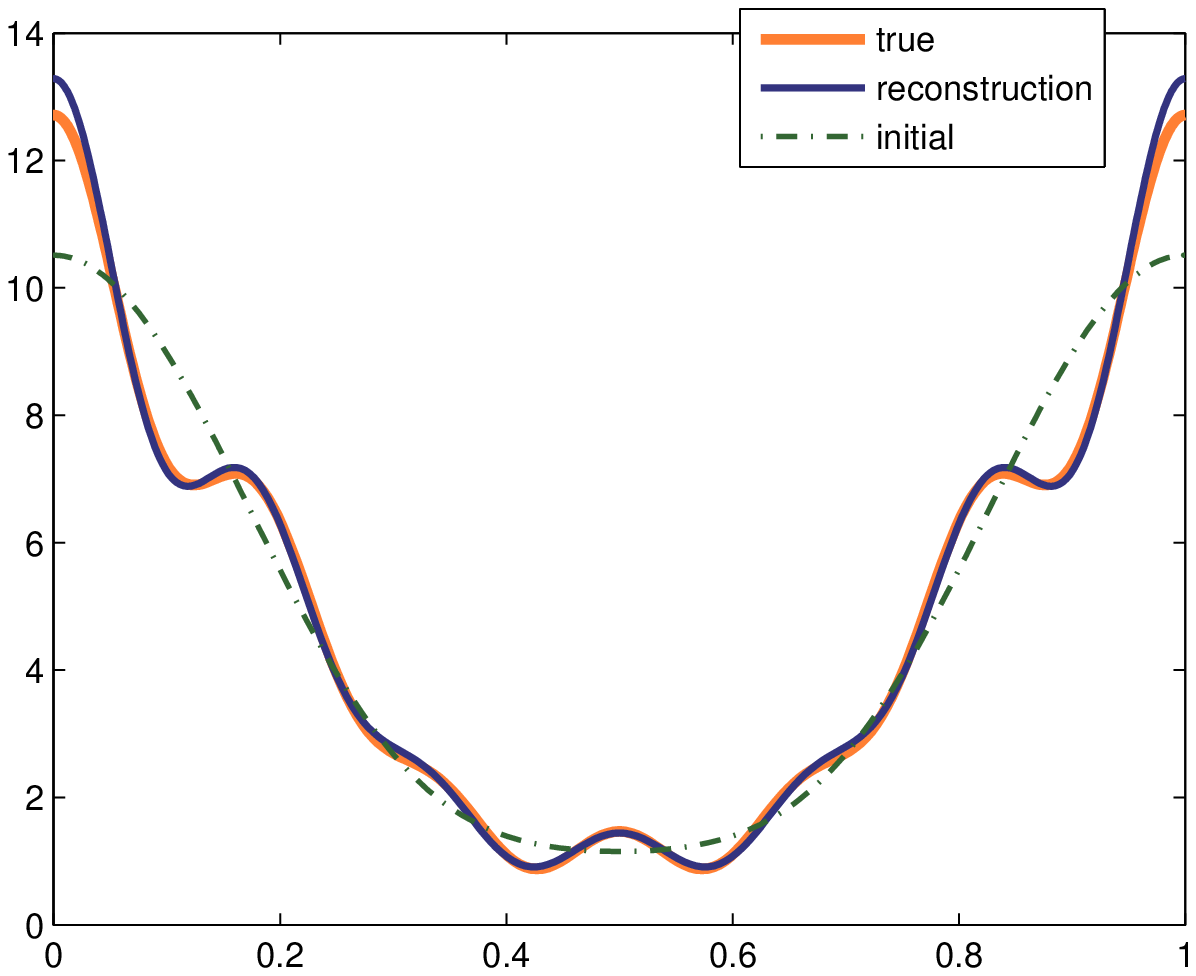}
      \caption{ K=8, N=35}
    \end{subfigure}
    \caption{Reconstruction of large damping $\alpha_M$ in Example \ref{example5} with $K=M$, $K_1=J=75$. }\label{Fig_ex5}
  \end{figure}

\end{example}

\section{Conclusion}
We have developed a novel inversion algorithm to recover the damping coefficient in a wave operator. A sequence of trace formulas are derived in a recursive form by investigating the resolvent properties of the damped wave operator, for which the inversion scheme is devised. Moreover, a class of polynomials needs to be chosen for the success of the inversion. Based on the distribution of eigenvalues and the properties of trace class operators, a sequence of proper polynomials is used. Numerical examples in Section \ref{sec5} illustrate the efficiency of the Algorithm.

\section*{Acknowledgements}
JZ thanks the many stimulating discussions with Steven Cox and Julio Moro.

\bibliographystyle{siam}
\bibliography{biblio}

\end{document}